\newcommand{\e}[1]{\wedge^{{#1}}}
\newcommand{\PGSp}{{\rm PGSp}}
\newcommand{\ch}{{\rm ch}}
\newcommand{\simp}{{\mathrm{sim}}}
\title{The exterior cubic $L$-function of $\GU_6$ and unitary automorphic induction}
\author{Lei Zhang}
\address{Math. Department\\
National University of Singapore\\
Singapore, 119076}
\email{matzhlei@nus.edu.sg}
\date{}
\subjclass[2010]{Primary 11F67; Secondary 11F66, 11F70, 11F85, 22E55}
\keywords{Poles of $L$-functions, Rankin-Selberg integral, Automorphic induction, Unitary similitude group}
\begin{document}
\begin{abstract}
In this paper, we extend Ginzburg-Rallis' integral representation for the exterior cube automorphic $L$-function of  $\GL_6\times \GL_1$ to that of the quasi-split unitary similitude group  $\GU_6$
and establish its analytic properties to determine the poles of this $L$-function.
Furthermore, we introduce the automorphic induction for automorphic representations of $\GU_n$ and then show that the weak Langlands functorial lift for the automorphic induction exists for generic cuspidal automorphic representations.
By using this automorphic induction, we give a conjectural criterion on the existence of poles of $L(s,\pi,\e{3}\otimes\chi)$ for discrete automorphic representations in the tempered spectrum. 
\end{abstract}

\thanks{The work of the author is supported in part by AcRF Tier 1 grant R-146-000-237-114 and R-146-000-277-114 of National University of Singapore.}

\maketitle
\tableofcontents


\section{Introduction}

Let $F$ be a number field and  $E=F[\sqrt{\tau}]$ be a quadratic extension of $F$,
whose Galois group $\Gal(E/F)$ is generated by a non-trivial automorphism $\iota_{E/F}\colon x\mapsto \bar{x}$. \label{pg:iota-E}
The ring of adeles of $F$ is denoted by $\BA$,
while the ring of adeles of $E$ is denoted by $\BA_E$.
Denote   $J_{2n}$ to  be the skew-symmetric matrix of size $2n\times 2n$
$$
J_{2n}=\begin{pmatrix}
&w_{n}\\ -w_{n}&
\end{pmatrix}, \text{ where $w_n=\begin{pmatrix}
&w_{n-1}\\1&	
\end{pmatrix} $}.
$$
Define the unitary similitude group $\GU_{2n}$ of skew-Hermitian type with respect to $J_{2n}$
\begin{equation}\label{eq:GU-2n}
\GU_{2n}(F)=\{ g\in \Res_{E/F}\GL_{2n}(F)\colon {}^{t}\bar{g}J_{2n}g=\lam(g) J_{2n}\},
\end{equation}
where $\Res_{E/F}\GL_{2n}$ is the Weil restriction of $\GL_{2n}$ from $E$ to $F$ and $\lam(g)\in \BG_{m}$ is the similitude factor of $g$.
Note that $\GU_{2n}$ is quasi-split over $F$.

Denote ${}^L\GU_6=\widehat{\GU}_6(\BC)\rtimes\Gal(E/F)$ to be the $L$-group of $\GU_6$. 
Let $\e{3}$ be the exterior cube representation of $\GL_6(\BC)$.
The representation $\e{3}$ is irreducible and of dimension 20. 
It can be extended to  a representation of $\widehat{\GU}_6(\BC)$.
In Section \ref{sec:exterior}, we will show that 
there are only two irreducible representations of ${}^L\GU_6$ of dimension 20, whose restriction to $\widehat{\GU}_6(\BC)$ is isomorphic to $\e{3}$.
We denote them by $\e{3}$ and $\e{3}\otimes\omega$, where $\omega$ is the non-trivial character of $\Gal(E/F)$.
The main purpose of this paper is to study this automorphic $L$-functions
$L(s,\pi,\e{3}\otimes\chi)$ for  irreducible cuspidal automorphic representations $\pi$ of $\GU_6(\BA)$ in the tempered spectrum, 
where  $\chi$ is a unitary automorphic character of $\GL_1(\BA)$.

To make our computation more understandable, 
let us give a  representation-theoretic explanation for the global integral $\CZ(s,\varphi_\pi,\phi_{\chi_\pi})$,
before we formal definition Ginzburg-Rallis' integral for $\GU_6$ in Section \ref{sec:Global-zeta}. 
 
Let $H$ be the subgroup $G^{\iota_{E/F}}$ of $G$ consisting of elements fixed by the Galois conjugate.
Then  $H$ is isomorphic to the symplectic similitude group $\GSp_6$.
Denote by $P$ the standard Siegel parabolic subgroup of $H$.  
Let $\varphi_\pi$ be  the factorizable vector in $\pi$ and  $\phi_{\chi_\pi}\in \Ind^{\GSp_6(\BA)}_{P(\BA)}\chi_\pi|\cdot|^s$,
where $\chi_\pi$ (given in \eqref{eq:chi-pi}) is a character defined by $\chi$ and the central character $\omega_\pi$ of $\pi$.
The global zeta integral $\CZ(s,\varphi_\pi,\phi_{\chi_\pi})$ is a $\GSp_6(\BA)$-equivariant linear functional in  
\[
\Hom_{\GSp^\triangle_{6}(\BA)}(\pi\otimes\Ind^{\GSp_6(\BA)}_{P(\BA)}\chi_\pi|\cdot|^s,\BC)
\cong
\Hom_{P(\BA)}(\pi\otimes \chi_\pi|\cdot|^s,\BC),	
\]
where $\GSp_6^\triangle$ is the diagonal embedding of $\GSp_6$ into $\GU_6\times \GSp_6$. 
More precisely, for $g\in\GSp_6(\BA)$,
$$
\CZ(s,g*\varphi_\pi,g*\phi_{\chi_\pi})=\CZ(s,\varphi_\pi,\phi_{\chi_\pi})
$$
where $*$ is the right translation.
Here $\Ind^{\GSp_6(\BA)}_{P(\BA)}\chi_\pi|\cdot|^s$   produces an Eisenstein series $E(\phi_\chi,\lam_s)$ on $\GSp_6$, which is involved in the global zeta integral.
 
After the global unfolding process and Fourier expansions, we produce the following $P(\BA)$-equivariant linear functional
in the space
$$
\Hom_{P(\BA)}(\CW(\pi)\otimes \chi_\pi|\cdot|^s,\BC)=\otimes_\nu\Hom_{P(F_\nu)}(\CW(\pi_\nu)\otimes \chi_{\pi,\nu}|\cdot|^s_\nu,\BC),
$$
where $\CW(\pi)$ is the Whittaker model of $\pi$.
Then one obtains an Euler product of local zeta integrals for $\Re(s)$ sufficiently large
$$
\CZ(s,\varphi_\pi,\phi_{\chi_\pi})= \prod_{\nu}\CZ_\nu(s,W_\varphi,\phi_{\chi_\nu}).
$$
It is easy to see that if $\pi$ is not generic, then the global integral $\CZ(s,\cdot)$ is identically zero. 
Thus, we need to establish the analytic properties for $L$-functions for generic cuspidal representations first, 
and then extend those results to the cuspidal representations in the tempered spectrum via the endoscopic classification of $\GU_n$.

Compared with the general linear group case in \cite{GR00},   one of main difficulties in this paper is to evaluate the local zeta integral $\CZ_{\nu}(s,\cdot)$ over unramified places. 
When $\nu$ is split over $E$, $\GU_6(F_\nu)$ is isomorphic to $\GL_6(F_\nu)\times \GL_1(F_\nu)$ and the unramified calculation has been done in \cite{GR00}.
Over the inert places $\nu$, the Casselman-Shalika formula for $\GU_6(F_\nu)$ is given in terms of the characters of $\GSpin_7(\BC)$.
The exterior cube $L$-factor  of $\GU_6(F_\nu)$ can be written as a product of the Spin $L$-factor and the standard $L$-factor of $\GSp_6(F_\nu)$, up to a zeta factor.
Hence the computation of local $L$-factors at inert places is similar to the scenario in \cite{BFG99}. 
We use Pierre's lemma for $\GSpin_7(\BC)$ and representation-theoretic arguments to establish the identity between the local zeta integral $\CZ_\nu(s,W_\varphi,\phi_{\chi_\nu})$ and the local exterior cube $L$-factor.
That is,
the global zeta integral equals
\[
\CZ(s,\varphi_\pi,\phi_{\chi_\pi})= \prod_{\nu\in S}\CZ_\nu(s,W_\varphi,\phi_{\chi_\nu})
\frac{L^S(\frac{s}{2}+\frac{1}{2},\pi\otimes\chi,\e{3})}{\zeta^S(s+2,\omega_\pi\chi^2)\zeta^S(2s+2,\omega_\pi\chi^2)},
\]
where $S$ is a finite set of places of $F$ including all ramified places and archimedean places. 

Next, we follow standard arguments on the analytic properties of the local zeta integrals over the places in $S$, 
and use the location of possible poles of $E(\phi_\chi,\lam_s)$ on $\GSp_6$.
Then we obtain the following theorem.
\begin{thm}\label{thm:pole-intro}
Let $\pi$ be a generic cuspidal automorphic representation of $\GU_6(\BA)$. Then
$L^S(s,\pi\otimes\chi,\e{3})$ is entire unless $\omega_\pi\chi^2$ is a nontrivial quadratic character of $\GL_1(F)\bks \GL_1(\BA)$,
in which case $L^S(s,\pi\otimes\chi,\e{3})$ can have at most a simple pole at $s=0$ or $s=1$.
\end{thm}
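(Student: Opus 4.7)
The plan is to read off the poles of $L^S(s,\pi\otimes\chi,\e{3})$ directly from the integral identity displayed above. The two ingredients needed are: (a) a precise description of the poles of the global zeta integral $\CZ(s,\varphi_\pi,\phi_{\chi_\pi})$, and (b) a choice of local data at $\nu\in S$ ensuring that $\prod_{\nu\in S}\CZ_\nu(s,W_\varphi,\phi_{\chi_\nu})$ is nonvanishing and holomorphic at the candidate poles.

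For (a), the only source of poles of $\CZ(s,\cdot)$ is the Eisenstein series $E(\phi_\chi,\lam_s)$ on $\GSp_6$ induced from the character $\chi_\pi|\cdot|^s$ of the Siegel parabolic $P$, since $\varphi_\pi$ is cuspidal and the unfolded integral converges absolutely on a right half-plane. By Langlands' theory, the poles of $E(\phi_\chi,\lam_s)$ coincide with those of its constant term along $P$, which is a finite sum over the Weyl group $W(M,\GSp_6)$ of intertwining operators. Using the Gindikin-Karpelevich formula to write each intertwining operator as a product of ratios of completed Hecke $L$-functions of $\chi_\pi$, a case-by-case analysis of Weyl elements shows that on the relevant strip the possible poles of $E(\phi_\chi,\lam_s)$ are simple, occur only at the integral-variable values $s=\pm 1$ (corresponding under $s \mapsto s/2+1/2$ to the candidate points $s=0,1$ of the $L$-function), and appear only when $\omega_\pi\chi^2$ is a nontrivial quadratic Hecke character of $\GL_1(F)\bks\GL_1(\BA)$.

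For (b), standard Rankin-Selberg techniques yield the required nonvanishing: at non-archimedean $\nu\in S$, one chooses $\phi_{\chi_\nu}$ supported in a small open Bruhat cell near the identity, making $\CZ_\nu$ a holomorphic polynomial in $q_\nu^{-s}$ that can be prescribed to be a nonzero constant at any given $s_0$; at archimedean places, one uses Schwartz-type sections to control growth and derivatives. In particular the $S$-part introduces no extraneous poles or zeros, so the possible poles of $L^S(s,\pi\otimes\chi,\e{3})$ correspond precisely to poles of $\CZ$ combined with zeros of the denominator zeta factors $\zeta^S(s+2,\omega_\pi\chi^2)$ and $\zeta^S(2s+2,\omega_\pi\chi^2)$.

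Combining (a) and (b): when $\omega_\pi\chi^2$ is trivial, the poles of $\CZ$ coming from the intertwining operators are exactly canceled by the poles of the denominator Hecke zetas, so $L^S$ is entire; when $\omega_\pi\chi^2$ is nontrivial but non-quadratic, neither source contributes, and again $L^S$ is entire; and when $\omega_\pi\chi^2$ is a nontrivial quadratic character, the surviving simple poles of the Eisenstein series translate to at most simple poles of $L^S$ at $s=0$ or $s=1$. I expect the main obstacle to be precisely the Weyl-element-by-Weyl-element pole analysis for the Siegel Eisenstein series on $\GSp_6$: pinning down exactly when and where each intertwining operator contributes a pole, correctly matching residues against the denominator zeta factors in the trivial case, and ruling out extra poles from long Weyl elements, is the bookkeeping that produces the sharp dichotomy stated in the theorem.
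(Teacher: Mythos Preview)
Your overall strategy matches the paper's, but step~(a) contains a genuine error that breaks the argument in the trivial-character case. It is not true that the Siegel Eisenstein series $E(\phi_{\chi_\pi},\lam_s)$ on $\GSp_6$ has poles only when $\omega_\pi\chi^2$ is a nontrivial quadratic character. When $\omega_\pi\chi^2=1$, the (normalized) Eisenstein series has simple poles at $s=1$ and $s=2$ in the integral variable (see the paper's Lemma~\ref{lm:pole-Eisenstein}, quoted from Ginzburg). Your proposed cancellation in the combining step cannot remove them: the zeta factors $\zeta^S(s+2,\omega_\pi\chi^2)$ and $\zeta^S(2s+2,\omega_\pi^2\chi^4)$ are holomorphic and nonzero at $s=1,2$ (they sit at $\zeta^S(3),\zeta^S(4),\zeta^S(6)$), so there is nothing to cancel against.

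The paper handles this by a different mechanism that your proposal omits entirely: one must identify the residual automorphic representation at each pole and show that the \emph{period integral} of $\varphi_\pi^\iota$ against the residue vanishes. At $s=2$ the residue is a one-dimensional character of $\GSp_6(\BA)$, and the resulting $\GSp_6$-period of a cusp form on $\GU_6$ is identically zero (this is the content of \cite{DP16}). At $s=1$ with $\omega_\pi\chi^2=1$, the residue is (proportional to) a Klingen Eisenstein series $E_Q$ induced from the mirabolic; unfolding $E_Q$ and using cuspidality of $\varphi_\pi$ along the Klingen parabolic shows that $\int\varphi_\pi E_Q$ vanishes as well. Only after these two vanishing results does one conclude that $\CZ^*(s,\varphi_\pi,\phi_{\chi_\pi})$ is entire when $\omega_\pi\chi^2=1$. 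This residue-identification-and-vanishing step is the essential idea your proposal is missing; the intertwining-operator bookkeeping you describe cannot replace it.
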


 
After we locate the possible poles of $L^S(s,\pi\otimes\chi,\e{3})$, similar to \cite{GR00} and \cite{Y}, a natural question is to ask for a criterion on the existence of the poles of the $L$-functions.
Another difficulty we encounter in this paper is to determine the pole of $L^S(s,\pi\otimes\chi,\e{3})$ by Langlands functoriality 
and then to extend the results to the tempered spectrum.

More precisely, assume that $L^S(s,\pi\otimes\chi,\e{3})$ has a pole.
To determine the poles for the unitary similitude group case, 
we need to consider the Langlands functoriality depending on the quadratic extension $E^+$ associated to $\omega_\pi\chi^2$ in Theorem \ref{thm:pole-intro}.
If $E^+\cong E$, then $\pi$ is an endoscopic lifting of $G(\RU_3\times\RU_3)$, which is a new case compared to $\GL_6$ case in \cite{GR00}.
If $E^+\not\simeq E$, we have a quartic extension $K=E^+\otimes_F E$ over $F$.
In this case, $\pi$ is expected to be an automorphic induction of $\Res_{E^+/F}\GU_3$ defined over a Hermitian space over $K$.


To establish the criterion of the poles and extend Theorem \ref{thm:pole-intro}, we study the endoscopic classification of $\GU_n$ first. 
Following Mok's endoscopic classification of discrete automorphic representations of $\RU_{n}(\BA)$ in \cite{Mk15} 
and
using Xu's framework in \cite{Xu16} and \cite{Xu17},
in Section \ref{sec:endoscopic-GU}, we give an expected conjectural endoscopic classification of discrete automorphic representations of $\GU_{n}(\BA)$ in its tempered spectrum, where automorphic representations share the same Arthur parameters with generic cuspidal automorphic representations.
Remark that the endoscopic classification of $\GU_n$ has been studied in many literatures such as \cite{CHL11}, \cite{M10} and \cite{Shin}.
Based on the endoscopic classification of $\GU_n$, we are able to extend Theorem \ref{thm:pole-intro} to all tempered automorphic representations.

Second, to determine the poles of $L$-functions, let us study the generic vector $v_0$ in
the irreducible representation $\e{3}$ of ${}^L\GU_6$.
That is, $\e{3}({}^L\GU_6)\cdot v_0$ is the open dense orbit in $\e{3}(\BC^{6})$.
Its stabilizer is the following subgroup of ${}^L\GU_6$
\begin{equation}\label{eq:fix-group}
[[((\GL_3\times\GL_3)\rtimes \apair{\eps})\times\GL_1]\rtimes \apair{\iota_{E/F}}]^\circ	
\end{equation}
consisting of all tuples $(g_1,g_2,\eps^j,a,\iota_{E/F}^i)$ satisfying $a\det(g_1)=a\det(g_2)=1$, and $\eps=\ppair{\begin{smallmatrix}
&I_3\\I_3&	
\end{smallmatrix}}$.
Note that  $((\GL_3\times\GL_3)\rtimes \apair{\eps})$ is a subgroup of $\GL_6$.
Therefore, 
for any cuspidal automorphic representation $\pi$ with a generic global Arthur parameter $\wt{\phi}$,
 $L(s,\pi,\e{3}\otimes \chi)$ has a pole at $s=1$ if and only if the image of 
 $(\e{3}\otimes \chi)\circ \wt{\phi}$ is contained in the subgroup \eqref{eq:fix-group}.
 This leads us to the automorphic induction of tempered automorphic representations of $\Res_{E^+/F}\GU_3$.

Hence, we conjecture the following criterion
 \begin{conj}\label{conj:poles-intro}
Suppose that $\pi$ is an irreducible cuspidal automorphic representation of $\GU_6(\BA)$ in the tempered discrete spectrum defined in Conjecture \ref{conj:endoscopy-GU}. 

Then $L^S(s,\pi,\e{3}\otimes\chi)$ has a pole at $s=1$ if and only if $\pi$ is an automorphic induction $i_{E^+/F,\xi}(\tau)$ from an irreducible cuspidal automorphic representation $\tau$ of $\GU^\circ_{K/E^+}(3,\BA_F)$ in the tempered discrete  spectrum and $L(s,i_F(\omega_\tau)\otimes\chi)$ has a pole at $s=1$.
\end{conj}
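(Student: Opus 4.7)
The plan is to analyze the Arthur parameter $\wt{\phi}$ of $\pi$ through its composition with $\e{3}\otimes\chi$ and read off the structural criterion from the stabilizer \eqref{eq:fix-group}. First I would reduce to the generic case: by Conjecture~\ref{conj:endoscopy-GU}, any tempered discrete $\pi$ shares its Arthur parameter with a generic cuspidal representation $\pi^{\mathrm{gen}}$ of $\GU_6(\BA)$, and since $L^S(s,\pi,\e{3}\otimes\chi)$ is determined by $\wt{\phi}$ it coincides with $L^S(s,\pi^{\mathrm{gen}},\e{3}\otimes\chi)$. Theorem~\ref{thm:pole-intro} then forces $\omega_\pi\chi^2$ to be a nontrivial quadratic character whenever this $L$-function has a pole at $s=1$; let $E^+/F$ be the associated quadratic extension and write $K=E^+\otimes_F E$.

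\textbf{Forward direction.} A pole at $s=1$ of $L^S(s,\pi,\e{3}\otimes\chi)$ is equivalent, in the tempered setting, to the existence of a nonzero vector in $\e{3}(\BC^6)$ fixed by the image of $(\e{3}\otimes\chi)\circ\wt{\phi}$, and hence to $\wt{\phi}$ factoring through the stabilizer of the generic open orbit vector $v_0$, i.e.\ through the subgroup \eqref{eq:fix-group}. Reading off the $(\GL_3\times\GL_3)\rtimes\apair{\eps}$ component together with the extension by $\apair{\iota_{E/F}}$, one recognizes precisely the shape of the $L$-homomorphism attached to automorphic induction from $\Res_{E^+/F}\GU^\circ_{K/E^+}(3)$ constructed in Section~\ref{sec:endoscopic-GU}. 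Two subcases arise: when $E^+\cong E$ the source reduces to the endoscopic group $G(\RU_3\times\RU_3)$, while when $E^+\not\cong E$ one works with the quartic extension $K$ and the associated Hermitian unitary group of rank $3$. Strong multiplicity one for $\GL_n$, combined with the (conjectural) endoscopic classification of $\GU_n$, then identifies $\pi$ with an automorphic induction $i_{E^+/F,\xi}(\tau)$ for some tempered cuspidal $\tau$ and discrete datum $\xi$. The remaining $\GL_1$-factor in \eqref{eq:fix-group}, cut out by $a\det(g_1)=a\det(g_2)=1$, encodes exactly the compatibility between the central character of $\tau$ and $\chi$ that is equivalent to $L(s,i_F(\omega_\tau)\otimes\chi)$ itself having a pole at $s=1$.

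\textbf{Backward direction.} Conversely, given $\pi=i_{E^+/F,\xi}(\tau)$ together with the pole hypothesis on $L(s,i_F(\omega_\tau)\otimes\chi)$, I would compute the Arthur parameter of $\pi$ explicitly as an induction of the Arthur parameter of $\tau$, verify by direct inspection that $(\e{3}\otimes\chi)\circ\wt{\phi}$ lands inside \eqref{eq:fix-group}, and extract an invariant line. Combined with the unramified Euler product identity recorded earlier in the introduction, this produces the required pole of $L^S(s,\pi,\e{3}\otimes\chi)$ at $s=1$.

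\textbf{Main obstacle.} The chief difficulty is the reliance on the still-conjectural endoscopic classification of $\GU_n$ formulated in Section~\ref{sec:endoscopic-GU}, together with the invariance of partial $L$-functions across members of a single Arthur packet, which is what allows the passage from generic cuspidals to the full tempered discrete spectrum. A secondary but substantive technical point is the precise normalization of the $L$-homomorphism realizing $i_{E^+/F,\xi}$ in the case $E^+\not\cong E$: one must ensure its image coincides with \eqref{eq:fix-group}, including correct tracking of the similitude factor, of the choice of $\xi$, and of the $\iota_{E/F}$-equivariance that distinguishes $\e{3}$ from $\e{3}\otimes\omega$.
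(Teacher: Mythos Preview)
The statement you are addressing is a \emph{conjecture}, and the paper does not claim to prove it. After stating it, the author writes explicitly that the full proof ``is long and complicated'' and is left for future work; the only evidence offered is the heuristic via the generic-orbit stabilizer \eqref{eq:fix-group}, the $L$-function factorization \eqref{eq:L-functions} under the $L$-embedding $\e{3}\circ {}^Li_{E^+/F,\xi}$, and a verification (Proposition~\ref{pro:n=3}) in the single endoscopic case $n_1=n_2=3$ with $E^+\simeq E$. So there is no ``paper's own proof'' to compare against; the relevant question is whether your outline would actually close the gap.

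It does not. Your forward direction rests on the assertion that a pole of $L^S(s,\pi,\e{3}\otimes\chi)$ at $s=1$ is \emph{equivalent} to $(\e{3}\otimes\chi)\circ\wt{\phi}$ having a fixed vector, hence to $\wt{\phi}$ factoring through \eqref{eq:fix-group}. For tempered parameters this is the expected picture, but in the paper's framework $\wt{\phi}$ is not a genuine Galois/Weil parameter: it is a substitute built from an isobaric sum on $\GL_6(\BA_E)$, and the only way to access the pole is to push $\pi$ to $\GL_6(\BA_E)$ via base change and decompose $L^S(s,b_{E/F}(\pi_0),\e{3}\otimes\bar\omega_\pi\chi_E)$ into Rankin--Selberg and Hecke factors according to the shape $n_1+\cdots+n_k=6$ of $b_{E/F}(\pi_0)$. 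That is exactly what the paper does in the $3+3$ case. For other shapes the required input is genuinely missing: the author singles out the $4+2$ case, where one needs a pole criterion for $L(s,\tau_1\otimes\tau_2,\e{2}\otimes\mathrm{St})$ on $\GL_4\times\GL_2$ (Example~\ref{ex}), which is not available. Your appeal to strong multiplicity one and the endoscopic classification does not supply this; it only tells you which isobaric sum you are looking at, not whether the relevant twisted exterior-square-times-standard $L$-function has a pole.

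There is a second gap in the forward direction. Even granting that $\wt{\phi}$ factors through the stabilizer, you still must produce an actual cuspidal $\tau$ on $\GU^\circ_{K/E^+}(3,\BA_F)$ with $\pi=i_{E^+/F,\xi}(\tau)$. This is a descent problem: you need the isobaric constituents of $b_{E/F}(\pi_0)$ to be automorphic inductions from $K$ (when $E^+\not\simeq E$) with the correct Asai parity, and then to assemble them into a tempered discrete $\tau$. The paper's Theorem~\ref{thm:automorphic-induction} goes only in the opposite direction (existence of the lift from $\tau$ to $\pi$), and the remark following Proposition~\ref{pro:n=3} notes that already in the $3+3$ case with $E^+\not\simeq E$ one lacks the refined results on automorphic induction needed to rule out spurious $\tau$. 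Your sketch does not engage with this.
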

Here the automorphic induction $i_F(\omega_\tau)$ is an automorphic representation of $\GL_2(\BA)$,
 $K=E\otimes_F E^+$ and $E^+/F$ is the quadratic extension associate to the non-trivial quadratic character $\omega_\pi\chi^2$,  and $\omega_\tau$ is the central character of $\tau$.

To establish Conjecture \ref{conj:poles-intro}, 
one needs to know, for instance,  the criterion of existence of poles of $L(s,\tau_1\otimes\tau_2,\e{2}\otimes {\rm St})$ for $\GL_4\times\GL_2$ (cf. Example \ref{ex}),
where ${\rm St}$ is the standard representation of $\GL_2(\BC)$ and $\e{2}$ is the exterior square of $\GL_4(\BC)$.
We expect that the full proof of Conjecture \ref{conj:poles-intro} is long and complicated, and hence we leave it for the future work.

Analogous to Conjecture in \cite[Section 4]{GR00}, we also conjecture that $L(\frac{1}{2},\pi, \e{3}\otimes\chi)$ is nonzero  if and only if the period integral over certain unitary Ginzburg-Rallis model is not identically zero. 
By using a similar argument in \cite{PWZ} for the general linear case, one can establish that 
if the period integral is nonzero, 
then $L^S(\frac{1}{2},\pi, \e{3}\otimes\chi)\ne 0$,
under certain conditions.
The local multiplicity problems of the analogy of the Ginzburg-Rallis model for the unitary group and the unitary
similitude group cases
have been studied in \cite{WZ18}.

Finally, remark that this exterior cube $L$-function also can be obtained by Langlands-Shahidi method.
For more details, the reader can refer to \cite{KK11}.
However, it is worth to mention that the integral representation of $L$-functions and the period integral of the residual of $L$-functions in Corollary \ref{cor:pole}
have many applications, such as studying the arithmetic properties of special values of automorphic $L$-functions
and
the $\e{3}$-trace formula of $\GU_6(\BA)$  discussed in \cite{JLST} and \cite{A15}, respectively.
In addition, since Conjecture \ref{conj:poles} determines all tempered automorphic representations of $\GU_6(\BA)$, 
one expects to 
match the spectrum side of $\e{3}$-trace formula of $\GU_6(\BA)$ with the spectrum side of the stable trace formula of $\GU^\circ_{K/E^+}(3,\BA_F)$.

The paper is organized as follows. In Section \ref{sec:global}, analogous to the Ginzburg-Rallis' integral, we define the global zeta integral for the $\GU_6$ case and obtain its Euler  product.
In Section \ref{sec:L-factor}, we introduce the exterior cube local  $L$-function of $\GU_6$ and 
compute the local $L$-factor over unramified places. 
Then we study the analytic properties of the partial $L$-function $L^S(s,\pi,\e{3}\otimes\chi)$
and 
give a necessary condition on the existence of poles of the partial $L$-function $L^S(s,\pi,\e{3}\otimes\chi)$ in Section \ref{sec:analytic-L}.
In Section \ref{sec:endoscopic-classificiation}, we give a conjectural endoscopic classification of discrete automorphic representations of $\GU_{n}$ in its tempered spectrum.
In Section \ref{sec:conjecture}, we introduce the automorphic induction for $\GU_n$ and show the existence of poles of cuspidal automorphic representations in tempered discrete spectrum.  
Finally, we state a conjectural criterion on the existence of pole of $L(s,\pi,\e{3}\otimes\chi)$ at $s=1$ in terms of the automorphic induction.

{\it Acknowledgment.} I would like to thank Dihua Jiang for suggesting this problem and for his encouragement to write up this paper. 
Also, I am very grateful to Bin Xu, Chufeng Nien and Fangyang Tian for their comments and valuable suggestions.  

\section{Integral representation}\label{sec:global}

In this section, we introduce the global zeta integral analogous to the  $\GL_6$ case in \cite{GR00} and obtain its Euler product when $\Re(s)$ is sufficiently large. 
In this paper, we focus mainly on the group $\GU_6$, simply denoted by $G$. Take the symplectic subgroup $H$ of $G$ defined by
$$
H=\{g\in \GL_{6}\colon {}^{t}gJ_{6}g=\lam(g) J_{6}\}\cong \GSp_6,
$$
which is the invariant subgroup under the Galois conjugation $g\mapsto \bar{g}$.

\subsection{Global zeta integral}\label{sec:Global-zeta}
Denote $P=MV$ to be the standard Siegel parabolic subgroup of $H$ consisting of elements of form $\ppair{ \begin{smallmatrix}
*&* \\ &*	
\end{smallmatrix} }$.
Its Levi  subgroup $M$ is isomorphic to $\GL_1\times\GL_3$ via  
$$
(a,g)\mapsto\begin{pmatrix}
a g& \\ & g^*	
\end{pmatrix}.
$$
where $g^*=w_3{}^t\!g^{-1}w_3^{-1}$ for $g\in \GL_3$.
Its unipotent radical $V$ consists of elements of form
\begin{equation}\label{eq:v}
v(X):=\begin{pmatrix}
I_{3}& X\\ &I_{3} 	
\end{pmatrix}	
\end{equation}
where $X\in M_{3\times 3}$ satisfies $X=w_3{}^t\!Xw_3^{-1}$.
Let $\delta_P$ be the modular character of $P$.
Then 
$\delta_P(a,g)=|a|^6|\det(g)|^4$,
where $|\cdot|$ is the absolute value over $\BA$.

Let $\chi$ be a character of $\GL_1(F)\bks\GL_1(\BA)$, extended to be a character of $M(\BA)$. We will specify the extension later.
Following the notation of \cite{MW95} (see I.2.17), 
we take an automorphic form 
$$
 \phi_\chi\in \CA(V(\BA)M(F)\bks H(\BA))_{\chi}
$$ 
and form, for $\lam_s\in X_M$,
the Eisenstein series on $H(\BA)$ by
$$
E(\phi_\chi,\lam_s)(g)=\sum_{\gamma\in P(F)\bks H(F)}\lam_s\phi_\chi(\gamma g).
$$
More precisely, we take  for $m=(a,g)\in M(\BA)$ 
$$
 \lam_s(a,g)=|a|^{\frac{3s}{2}}|\det g|^{s}.
$$  
Here we follow the normalization in \cite{Sh10} and use $4s-2$ to replace $s$ in \cite{GR00}.
The theory of Langlands on Eisenstein series shows that $E(\phi_\chi,s)$ converges absolutely for $\Re(s)$ large and has a meromorphic continuation to the complex plane $\BC$.

Let $N_G$ (resp. $N_H$) be the maximal unipotent subgroup of $G$ (resp. $H$) consisting of all upper unipotent matrices.
Fix a nontrivial character $\psi_0$ of $F\bks \BA$ and define a character $\psi_N$ of $N_G(\BA)$ by 
\begin{equation}\label{eq:generic-character}
\psi_N(n)=\psi_0(\tr_{E/F}\frac{1}{2\sqrt{\tau}}(n_{1,2}+n_{2,3})+n_{3,4})	
\end{equation}
where $n=(n_{i,j})\in N_G$.
Let $\pi$ be an irreducible generic cuspidal automorphic representation of $G(\BA)$,
whose central character is denoted by $\omega_\pi$. 
Note that the maximal diagonal torus acts transitively on the set of non-degenerate characters of $N_G(F)\bks N_G(\BA)$. 

Let $\chi$ be a unitary character of $\GL_1(F)\bks \GL_1(\BA)$.
Define $\chi_\pi$ to be a character of $M(\BA)$ given by
\begin{equation}\label{eq:chi-pi}
\chi_\pi(a,g)=(\omega_\pi\chi^3)(a)(\omega_\pi\chi^2)(\det(g)),
\end{equation}
where $\omega_\pi$ is restricted into $\GL_1(F)\bks \GL_1(\BA)$.
Then for $z\in Z(\BA)$
$$
E(\phi_{\chi_\pi},\lam_s)(zg)=\omega_\pi(z)^{-1}E(\phi_{\chi_\pi},\lam_s)(g).
$$
where $Z$ (or $Z_H$) is the center of $H$. 
The Eisenstein series is normalized as the following
$$
E^*(\phi_{\chi_\pi},\lam_s)(g):=L^S(s+2,\omega_\pi\chi^2)L^S(2s+2,\omega^2_\pi\chi^4)E(\phi_{\chi_\pi},\lam_s)(g).
$$
where $S$ is a finite set of  all archimedean places and ramified places such that outside $S$ all data are unramified,
and $L^S(s,\cdot)=\prod_{\nu\notin S}L_\nu(s,\cdot)$ is the partial $L$-function.
Referring to \cite{G95} for instance, the poles of the normalized Eisenstein are given as follows.
\begin{lm}[Lemma 5.4 \cite{G95}]\label{lm:pole-Eisenstein}
For $\Re(s)\geq 0$,
\begin{enumerate}
 	\item if $\omega^2_\pi\chi^4\ne 1$, then $E^*(\phi_{\chi_\pi},\lam_s)$ is entire;
 	\item if $\omega^2_\pi\chi^4=1$ and $\omega_\pi\chi^2\ne 1$, then $E^*(\phi_{\chi_\pi},\lam_s)$ has a simple pole at $s=1$;
 	\item if $\omega_\pi\chi^2=1$, then $E^*(\phi_{\chi_\pi},\lam_s)$ has a simple pole at $s=1$ and $s=2$.
 \end{enumerate} 
\end{lm}

Let $\varphi_\pi$ be a cuspidal automorphic form in $\pi$. 
Write $\iota$ to be the involution of $G$ given by
$$
\iota(g)= \begin{pmatrix}
I_2&&\\ &w_2& \\ &&I_2	
\end{pmatrix} g \begin{pmatrix}
I_2&&\\ &w_2& \\ &&I_2	
\end{pmatrix}^{-1}.
$$ 
For a function $\varphi$ on $G$, denote $\varphi^\iota=\varphi\circ\iota$.

The {\it global zeta integral} $\CZ(s,\varphi_\pi,\phi_{\chi_\pi})$ is defined by
\begin{equation}
\CZ(s,\varphi_\pi,\phi_{\chi_\pi})=\int_{Z(\BA)H(F)\bks H(\BA)}\varphi^\iota_\pi(g)E(\phi_{\chi_\pi}\lam_s)(g)\ud g,
\end{equation}
and is simplified as $\CZ(\cdot)$ if no confusion is caused.
In addition, we may use the partial $L$-functions $L^S(s+2,\omega_\pi\chi^2)L^S(2s+2,\omega^2_\pi\chi^4)$ to  normalize $\CZ(s,\varphi_\pi,\phi_{\chi_\pi})$, that is,
\begin{equation}\label{eq:normalized-zeta-section-2}
\CZ^*(s,\varphi_\pi,\phi_{\chi_\pi})=L^S(s+2,\omega_\pi\chi^2)L^S(2s+2,\omega^2_\pi\chi^4)\CZ(s,\varphi_\pi,\phi_{\chi_\pi}).
\end{equation}

Because $\varphi_\pi$ is cuspidal and $E(\phi_{\chi_\pi},\lam_s)$ is of moderate growth, the global zeta integral $\CZ(s,\varphi_\pi,\phi_{\chi_\pi})$ converges absolutely and is meromorphic in $s\in \BC$.
The functional equation for $\CZ(s,\varphi_\pi,\phi_{\chi_\pi})$ relating $s$ to $-s$ follows from the functional equation of the normalized Eisenstein series $E(\phi_{\chi_\pi},\lam_s)$.

\subsection{The eulerian property of the global integral}\label{sec:euler}
For $\Re(s)$ sufficient large, we are able to unfold the Eisenstein series
\begin{align*}
\CZ(\cdot)= &\int_{Z(\BA)P(F)\bks H(\BA)}\varphi^\iota_\pi(g)\phi_\chi(g)\ud g\\
=&\int_{Z(\BA)M(F)V(\BA)\bks H(\BA)}\phi_\chi(g)\int_{[V]}\varphi^\iota_\pi(vg)\ud v\ud g.
\end{align*}
Here for an algebraic group $X$, as a convention $[X]$ denotes the quotient $X(F)\bks X(\BA)$.

Write $V_G=\{v(X)\mid w_3{}^t\!\bar{X}w_3=X,~X\in \Res_{E/F}M_{3\times 3}\}$, the unipotent subgroup of $G$. It can be decomposed as
\begin{equation}\label{eq:V-G}
V_G=V\oplus V_G^{-}\text{ with }
V_G^-=\{v(\sqrt{\tau}A)\mid w_3{}^t\!Aw_3=-A,~A\in M_{3\times 3}\}.
\end{equation}
Applying the partial Fourier expansion of $\varphi_\pi$ on $[V^-_G]$,
$\varphi^\iota_\pi$ is equal to
\begin{equation}\label{eq:F-1}
\sum_{\{\alpha \in M_{3\times 3}(F)\mid w_3{}^t\!\alpha w_3=-\alpha \}}\int_{[V_{G}^-]} \varphi^\iota_\pi(v(\sqrt{\tau} A)vg)\psi_0(\frac{\tr\alpha A}{2})\ud A.	
\end{equation}

Let $M(F)$ act on the character group of $[V^-_G]$, 
by the action $(a,g)\cdot A=aw_3{}^t\!gw_3Ag$, and then it has two orbits.
We choose representatives $v(0)=I_6$ and $v(\alpha_1)$ where 
\begin{equation}\label{eq:alpha-1}
\alpha_1= \begin{pmatrix}
0&1&0 \\ 0&0&-1 \\ 0&0&0	
\end{pmatrix},
\end{equation}
and their stabilizers are $M(F)$ and $L_1(F) V^+_1(F)$ respectively, where
$$
L_1=\cpair{
\diag(\det(g),g,g^*,1)\mid g\in \GL_2}
$$
and $V^+_1$ consists of elements of form
\begin{equation}\label{eq:v-1}
v_{1}(x,y):=
\diag\ppair{\begin{pmatrix}
1&x&y\\ &1&0 \\ &&1	
\end{pmatrix}, \begin{pmatrix}
1&0&-y\\ &1&-x\\ &&1	
\end{pmatrix}}.	
\end{equation}
After changing variable, by the automorphy of $\varphi_\pi$, we rewrite \eqref{eq:F-1} as
$$
\int_{A} \varphi^\iota_\pi(v(\sqrt{\tau} A)vg)\ud A 
+ \sum_{\gamma\in  L_{1}V^+_1(F)\bks M(F)}\int_{A} \varphi^\iota_\pi(v(\sqrt{\tau} A)\gamma vg)\psi_0(\frac{\tr\alpha A}{2})\ud A.
$$

Similar to Page 248 in \cite{GR00}, this Fourier expansion is absolutely convergent for $\Re(s)$ sufficient large, due to the estimation on the cuspidal form $\varphi_\pi$ in \cite{MW95}.
We may replace $\varphi_\pi$ by its Fourier expansion, combine the integrals over $[V]$ and $[V_G^{-}]$, and then obtain
that $\CZ(\cdot)$ equals 
\begin{align}
&\int_{Z(\BA)M(F)V(\BA)\bks H(\BA)}\phi_{\chi}(g)\int_{[V_G]} \varphi^\iota_\pi(vg)\ud v\ud g \label{eq:F-1-cusp}\\
+&\int_{Z(\BA)L_1V^+_1(F)V(\BA)\bks H(\BA)}\phi_{\chi}(g)
\int_{[V_G]} \varphi^\iota_\pi(v(X)g)\psi_0(\tr_{E/F}\frac{\tr\alpha X}{4\sqrt{\tau}})\ud v(X)\ud g. \label{eq:F-2}   
\end{align}
Due to the cuspidality of $\varphi_\pi$, the term \eqref{eq:F-1-cusp} equals 0. 
Then $\CZ(\cdot)$ equals the term \eqref{eq:F-2}.

Next, we will perform the above {\it Fourier-expansion process} several times.
For simplicity, we will skip repeated details.
Set $V^{-}_{1}$ to be the subgroup consisting of elements $v_1(\sqrt{\tau}x,\sqrt{\tau}y)$ for $x,y\in\BG_a$. 
Continue \eqref{eq:F-2}, and consider the Fourier expansion on $[V^-_1]$.
The action of $L_1(F)$ on the character group of $[V^-_1]$ has two orbits: the trivial character and the set of non-trivial characters. 
After combing the inner integrals $\int_{[V^+_1]}\int_{V_G}$, the Fourier coefficient associated to the trivial character vanishes due to the cuspidality of $\varphi_\pi$. We choose the non-trivial character on $[V^-_1]$ defined by
$v_1(\sqrt{\tau}x,\sqrt{\tau}y)\mapsto \psi_0(x)$.
Then its stabilizer in $L_1(F)$ is $L_2V^+_2(F)$, where
$$
L_2=\{\diag(a,a,1,a,1,1)\mid a\in \BG_m\}
$$
and $V^+_2$ consists of elements of form
\begin{equation}\label{eq:v-2}
v_2(x):=\diag\ppair{1,\begin{pmatrix}
1&x\\ &1	
\end{pmatrix},\begin{pmatrix}
1&-x\\ &1	
\end{pmatrix},1}.
\end{equation}

Following a similar Fourier-expansion process, we have $\CZ(\cdot)$ equal to
\begin{align}
&\int_{\BZ(\BA)(L_2V_2^+)(F)(V^+_1V)(\BA)\bks H(\BA)}\phi_\chi(g)
\nonumber \\
&\times \int_{[V_1V_G]}\varphi^\iota_\pi(vg)\psi_0(\tr_{E/F}\frac{v_{1,2}+v_{2,4}}{2\sqrt{\tau}})\ud v\ud g,\label{eq:F-3}
\end{align}
where $V_1=V_1^+\oplus V^-_1$ and $v=(v_{i,j})\in V_1V_G$.

Next, we consider the Fourier expansion on $V^-_2$, which consists of elements $v_2(\sqrt{\tau}x)$ for $x\in \BG_a$.
Let us combine the integrals over unipotent subgroups,
\begin{equation}\label{eq:F-4-1}
\int_{[V^+_2V_1V_G]}\sum_{\alpha\in F}\int_{[V^-_2]}
\varphi^\iota_\pi(v_2(\sqrt{\tau}x)vg) \psi_0(\tr_{E/F}\frac{v_{1,2}+v_{2,4}}{2\sqrt{\tau}}+\alpha x)\ud x\ud v. 
\end{equation}
Now we apply the technique of `exchanging roots' (see Chapter 7 \cite{GRS11} for instance). 
Because $\varphi_\pi$ is left $G(F)$-invariant, 
for $\alpha\in F$,
\begin{align}
&\varphi^\iota_\pi(v_2(\sqrt{\tau}x)vg)=\varphi^\iota_\pi(v_3(-\alpha)v_2(\sqrt{\tau}x)vg)\nonumber \\    
=&\varphi^\iota_\pi(v_2(\sqrt{\tau}x)v_3(-\alpha)v\begin{pmatrix}
0&0&0\\ \sqrt{\tau}\alpha x&\tau \alpha x^2&0\\ 0&-\sqrt{\tau}\alpha x&0	
\end{pmatrix}vg), \label{eq:F-4-2}
\end{align}
where $v_3(\alpha)=\diag(I_2,\ppair{\begin{smallmatrix}
1& \alpha\\ &1	
\end{smallmatrix}},I_2)$. 
Plugging \eqref{eq:F-4-2} into \eqref{eq:F-4-1} and changing variable $v\mapsto v(\cdot)^{-1}v$, we have
\begin{equation}
\int_{[V^+_2V_1V_G]}\sum_{\alpha\in F}\int_{[V^-_2]}
\varphi^\iota_\pi(v_2(\sqrt{\tau}x)v_3(-\alpha)vg) \psi_0(\tr_{E/F}\frac{v_{1,2}+v_{2,4}}{2\sqrt{\tau}})\ud x\ud v. 
\label{eq:F-4-3}	
\end{equation}

Set $V_3=\{v_3(x)\mid x\in \BG_a\}$ and $V^c_3=\{n=(n_{i,j})\in N_G\mid n_{3,4}=0\}$. Note that $N_G=V_3V^c_3$.
Continuing \eqref{eq:F-4-3}, we have
$$
\sum_{\alpha\in F}\int_{[V_3]}\int_{[V^c_3]}\varphi^\iota_{\pi}(vv_3(x-\alpha)g)\psi_0(\tr_{E/F}\frac{v_{1,2}+v_{2,4}}{2\sqrt{\tau}})
\ud v\ud x
$$
$$
=\int_{V_3(\BA)}\int_{[V^c_3]}\varphi^\iota_{\pi}(vv_3(x)g)\psi_0(\tr_{E/F}\frac{v_{1,2}+v_{2,4}}{2\sqrt{\tau}})
\ud v\ud x.
$$

Take the Fourier expansion on $[\iota(V_3)]$.
Similarly, $L_2(F)$ acts on the character group of $[\iota(V_3)]$ with two orbits. 
The trivial character eventually contributes $0$ to the global zeta integral, due to the cuspidality of $\varphi_\pi$. 
The stabilizer of $L_2(F)$ on non-trivial characters is identity. 
Choosing the character $\iota(v_3(x))\mapsto \psi_0(x)$ of $[\iota(V_3)]$, by $\iota(V^c_3\cdot \iota(V_3))=N_G$, we obtain that $\CZ(\cdot)$ equals
\begin{align*}
&\int_{Z(\BA)N_H(\BA)\bks H(\BA)}\phi_\chi(g) 
\int_{V_3(\BA)}\int_{[N_G]}\varphi_\pi(n\cdot\iota(v_3(x)g))
\psi_N(n)\ud n\ud x\ud g \nonumber\\
=&\int_{Z(\BA)N_H(\BA)\bks H(\BA)}\phi_\chi(g) 
\int_{V_3(\BA)}W_\varphi^\iota(v_3(x)g)\ud x\ud g,  
\end{align*} 
where $W_\varphi(g)$ is the Whittaker function of $\varphi_\pi$ with respect to $\psi_N$.

Finally, we may define the local zeta integral by
\begin{align}
 & \CZ_\nu(s,W_\varphi,\phi_{\chi_\nu})\nonumber \\
=& \int_{ Z(F_\nu)N_H(F_\nu)\bks H(F_\nu)}\phi_{\chi_\nu}(g_\nu) 
\int_{V_3(F_\nu)}W_{\varphi_\nu}(\iota(v_3(x_\nu)g_\nu))\ud x_\nu\ud g_\nu.	\label{eq:zeta-eulerian}
\end{align}
Remark that  the inner integral considered as a function of the variable $g_\nu$ is left $N_H(F_\nu)$-invariant. 
That is  because the restriction $\psi_N$ (defined in \eqref{eq:generic-character}) into $v_1(x,y)$ (in \eqref{eq:v-1}) and $v_2(x)$ (in \eqref{eq:v-2}) are trivial.

The calculations discussed previously can be summarized as follows.
\begin{pro}\label{pro:Eulerian}
Let $\pi$ be an irreducible generic cuspidal automorphic representation of $G(\BA)$ and $\varphi_\pi$ be a cuspidal automorphic form of $\pi$. 
Assume that $\varphi_\pi$ is factorizable. 

Then for $\Re(s)$ sufficient large, the global zeta integral $\CZ(s,\varphi_\pi,\phi_{\chi_\pi})$ is eulerian, that is,
$$
\CZ(s,\varphi_\pi,\phi_{\chi_\pi})=\prod_{\nu}\CZ_\nu(s,W_\varphi,\phi_{\chi_\nu})
$$
where the product is taken over all local places $\nu$ of $F$.
\end{pro}

\section{Unramified Local $L$-functions} \label{sec:L-factor}
In this section, we start with recalling the definition of the $L$-group ${}^L\GU_{n}$ and explicate the exterior cube representation of ${}^L\GU_{6}$.
Then we will explicitly write down the local exterior cube $L$-functions of the unramified representations. 
After that, we will compute the local zeta integral $\CZ_\nu(\cdot)$ over all unramified places and obtain the desired exterior cube $L$-function.

\subsection{The twisted exterior cube representation} \label{sec:exterior}
Referring to \cite{KK08} for instance, recall that the $L$-group ${}^L\GU_{n}$ of $\GU_{2n}$ is isomorphic to $( \GL_{2n}(\BC)\times\GL_{1}(\BC))\rtimes \apair{\sig}$, where
$$
\sig(g,a)=(J_{2n}{}^{t}g^{-1}J_{2n},a\det(g)),
$$
where we write $\sig$ to $\iota_{E/F}$  in $\Gal(E/F)$ for short.
It is well known that
all irreducible representations of $\GL_{2n}(\BC)$ are parameterized by partitions $(r_1,r_2,\dots,r_{2n})$ where $r_1\geq r_2\geq \cdots\geq r_{2n}$. 
For instance, the $n$-th  exterior power   $\wedge^n$ of $\GL_{2n}(\BC)$ corresponds to the partition $(\underbrace{1,\dots,1}_{n},\underbrace{0,\dots,0}_{n})$.

For an irreducible representation $\rho$ of $\GL_{2n}(\BC)$,
denote $\rho_{m}$ to be an irreducible representation of $\GL_{2n}(\BC)\times\GL_{1}(\BC)$ extended by
$$
\rho_{m}\colon (g,a)\mapsto \rho(g)a^{m}. 
$$
We generalize  Lemma 2.1 in \cite{KK08} to the following lemma.
\begin{lm}\label{lm:extension-cond}
The irreducible representation $\rho_{m}$  is extendable to ${}^{L}\GU_{2n}$ if and only if $r_{i}+r_{2n+1-i}=m$ for all $i$. 
\end{lm}
\begin{proof}
It is easy to see that the partition associated to $\rho_m\circ\sig$ is $(m-r_{2n},m-r_{2n-1},\dots,m-r_{1})$.
The irreducible representation $\rho_{m}$  is extendable to ${}^{L}\GU_{2n}$ if and only if $\rho_{m}$ and $\rho_{m}\circ\sig$ are isomorphic,
in which case  $r_{i}+r_{2n+1-i}=m$ for all $i$. 
In such case, there is an intertwining operator $A$ such that $\rho_{m}\circ\sig =A \rho_{m}A^{-1}$, where $A\in \GL_{\dim(\rho)}(\BC)$.
The representation $\rho_{m}$ extends on ${}^{L}\GU_{2n}$ by
$$
(g,a,1)\mapsto a^m\rho(g)  \text{ and }
(1,1,\sig)\mapsto A.	
$$
\end{proof}
Note that in general  the extension of $\rho_m$ is not unique.

Following the proof of Lemma \ref{lm:extension-cond}, $\wedge^n$ can be extended to   an irreducible representation of ${}^{L}\GU_{2n}$. 
First we extend $\wedge^n$ to   a representation of $\GL_{2n}(\BC)\times \GL_1(\BC)$ via $(g,a)\mapsto a\wedge^n(g)$, still denoted by $\wedge^n$ for simplicity.
Let $\rho_1$ be an extension of $\wedge^n$ to ${}^{L}\GU_{2n}$. 
We will give an explicit description of $\rho_1$.

Let $\cpair{e_{1},e_{2},\dots,e_{2n}}$ be the standard basis for $\BC^{2n}$.
Then 
\begin{equation}\label{eq:basis}
\CB_n=\cpair{e_{i_1}\wedge e_{i_2}\wedge\cdots\wedge e_{i_{n}}\colon 1\leq i_1<i_2<\cdots<i_n\leq 2n}	
\end{equation}
is a basis for $\wedge^{n}(\BC^{2n})$.
For $v=e_{i_1}\wedge e_{i_2}\wedge\cdots\wedge e_{i_{n}}$ and 
$w=e_{j_1}\wedge e_{j_2}\wedge\cdots\wedge e_{j_{n}}$ in $\CB_n$,
define the bilinear form $q$ by
\begin{equation}\label{eq:q}
q(v,w)e_{1}\wedge e_{2}\wedge\cdots\wedge e_{2n}=v\wedge w.	
\end{equation}
We have $q(v,w)=(-1)^nq(w,v)=\pm 1$, and for $g\in\GL_{2n}(\BC)$
\begin{equation}\label{eq:L-GU6-q}
q(ge_{i_1}\wedge\cdots\wedge ge_{i_{n}},ge_{j_1}\wedge\cdots\wedge ge_{j_{n}})
=\det(g)q(v,w).
\end{equation}
Thus, $q(\cdot,\cdot)$ is a symplectic (resp. symmetric) form of $\wedge^{n}(\BC^{2n})$ if $n$ is odd (resp. even).
 
Write $N=\dim \wedge^{n}(\BC^{2n})$ in the rest of  this section.
Let $S$ be the linear map associated to $q$ defined by, for all $i_1<i_2<\cdots<i_n\leq 2n$,
$$
S\colon e_{i_1}\wedge e_{i_2}\wedge\cdots\wedge e_{i_{n}}\mapsto q(e_{i^{-}_1}\wedge\cdots\wedge e_{i^{-}_{n}},e_{i_1}\wedge\cdots\wedge e_{i_{n}})e_{i^{-}_1}\wedge\cdots\wedge e_{i^{-}_{n}},
$$
where $\cpair{i^{-}_1,i^{-}_2,\dots,i^{-}_{n}}\cup \cpair{i_1,i_2,\dots,i_{n}}$ is a partition of $\{1,2,\dots,2n\}$ and $i^{-}_1<i^{-}_2<\dots<i^{-}_{n}$. 
Then $S^{2}=(-1)^nI_{N}$ and $S=(-1)^n\cdot {}^{t}\!S$.
By \eqref{eq:L-GU6-q}, we have
\begin{equation}\label{eq:wedge}
{}^t\!\wedge^n(g)\cdot S\cdot\wedge^n(g)=\det(g)S. 	
\end{equation}
Hence, if $n$ is odd, $\wedge^n(g)$ is in $\GSp_{N}(S,\BC)$;
if $n$ is even, $\wedge^n(g)$ is in $\GSO_N(S,\BC)$, where
$$\GSO_N(S,\BC)=\{U\in\GL_{N}(\BC)\colon {}^t U\cdot S\cdot U=\lam(U) S, \det(U)=\lam(U)^{N/2}\}$$
and $\lam(\cdot)$ is the similitude character of $\GSO_N(S,\BC)$.

Write $A$ to be the image $\rho_1((1,1,\sig))$ in $\GL_{N}(\BC)$.
Then it satisfies $A^2=I_{N}$. Following the proof of Lemma \ref{lm:extension-cond},  for all $g\in \GL_{2n}(\BC)$, we have
\begin{equation}\label{eq:L-GU6-rho}
\det(g)\cdot \wedge^n(J_{2n} {}^{t}\!g^{-1}J^{-1}_{2n})=A\cdot \wedge^n(g)\cdot A^{-1}
\end{equation}
Plugging \eqref{eq:wedge} into \eqref{eq:L-GU6-rho}, one has that $A^{-1}\cdot S\cdot \e{n}(J_{2n})$ is commutative with the image $\e{n}(\GL_{2n}(\BC))$.
Since $\e{n}$ is irreducible, by Schur Lemma, $A^{-1}\cdot S\cdot \e{n}(J_{2n})$ is in the center of $\GL_{N}(\BC)$.
We may assume that $A=zS\cdot \e{n}(J_{2n})$ for some $z\in \BC^\times$.
It is easy to check that $\e{n}(J_{2n})^{2}=(-1)^n\cdot I_{N}$ and $\e{n}(J_{2n})=(-1)^n\cdot{}^{t}\!\e{n}(J_{2n})$.
Since $\e{n}(J_{2n})$ satisfies \eqref{eq:wedge}, we have $S\cdot \e{n}(J_{2n})=\e{n}(J_{2n})\cdot S$.
Then $A=\pm S\cdot \e{n}(J_{2n})$ as $A^2=I_{N}$.

Let $\omega$ be the character of ${}^{L}\GU_{2n}$ given by 
$$
(g,a,1)\mapsto 1 \text{ and  }
(1,1,\sig)\mapsto -1.
$$
In particular, denote $\e{n}$ to be the extension of $\rho_1$ to an irreducible representation of ${}^L\GU_{2n}$ such that
\begin{equation}\label{eq:rho}
\e{n}\colon (g,a,1)\mapsto a\e{n}(g)\text{ and }
(1,1,\sig)\mapsto S\cdot \e{n}(J_{2n}).
\end{equation}
Write $\e{n}\otimes\omega$ for the other extension different by
$\e{n}\otimes\omega\colon (1,1,\sig)\mapsto -S\cdot \e{n}(J_{2n})$.
Then we have the following
\begin{lm}\label{lm:exterior-cube-repn}
The extensions $\e{n}$ and $\e{n}\otimes\omega$ are the only two non-isomorphic irreducible representations of ${}^{L}\GU_{2n}$.
\end{lm}
\begin{proof}
Let $\rho_1$ be an extension of $\e{n}$.
Following the above discussion, there are only two choices for the image $\rho_1((1,1,\sig))$.
Hence it suffices to show that $\e{n}$ and $\e{n}\otimes\omega$ are not isomorphic. 
We only need to show that $S\cdot \e{n}(J_{2n})$ and $-S\cdot \e{n}(J_{2n})$ are not conjugate in $\GL_{N}(\BC)$. 
Write $A=S\cdot \e{n}(J_{2n})$. 
Then $A={}^tA$ and $A^2=I_N$, which implies that $A$ is diagonalizable and has only eigenvalues $\pm 1$.

We will show that $\dim\ker(A-I_N)\ne \dim\ker(A+I_N)$, where $\dim\ker(A\pm I_N)$ is the dimension of the eigenspaces with respect to $\pm 1$.
Denote $\CB^\pm=\{v\in\CB_n\colon A(v)=\pm v\}$, where $\CB_n$ is the basis \eqref{eq:basis} and $\CB^c=\CB_n\smallsetminus (\CB^+\cup\CB^-)$.
Remark that $\CB^+\cup \CB^-$ is not empty for all $n\geq 1$.
Under the action of $A$, we have the decomposition 
$$
\e{n}(\BC^{2n})={\rm Span}_{\BC}\CB^c\oplus {\rm Span}_{\BC}\CB^+\oplus {\rm Span}_{\BC}\CB^-.
$$ 
Restricted to the subspace ${\rm Span}_{\BC}\CB^c$, due to $A^2=I_N$, $A$ has the same dimensions of the eigenspaces with respect to $\pm 1$.
It suffices to show that $\#\CB^+\ne \#\CB^-$, that is, $\CB^\pm$ have the different cardinality. 

Let $v=e_{i_1}\wedge e_{i_2}\wedge\cdots\wedge e_{i_n}\in \CB^+\cup\CB^-$, where $1\leq i_1<i_2<\cdots<i_n\leq 2n$. 
Note that by tracking the index $i_1<i_2<\cdots<i_n$ we have $v\in\CB^+\cup\CB^-$ if and only if $\{i_1,i_2,\dots,i_n\}\cup \{2n+1-i_1,2n+1-i_2,\dots,2n+1-i_{2n}\}$ is a partition of $\{1,2,\dots,2n\}$.
Then 
\begin{equation}\label{eq:e-J}
\e{n}(J_{2n})(v)=(-1)^\alpha e_{2n+1-i_1}\wedge e_{2n+1-i_2}\wedge\cdots\wedge e_{2n+1-i_n},	
\end{equation}
where $\cup^{n}_{k=1}\{i_{k},2n+1-i_k\}$ is a partition of $\{1,2,\dots,2n\}$ and $\alpha=\#\{i_k\colon i_k\leq n\}=\#\{i_k\colon i_k<2n+1-i_{k}\}$.

To compute $S(e_{2n+1-i_1}\wedge e_{2n+1-i_2}\wedge\cdots\wedge e_{2n+1-i_n})$,
we need to calculate $q(v,e_{2n+1-i_n}\wedge e_{2n+1-i_{n-1}}\wedge\cdots\wedge e_{2n+1-i_1})$ by the definition of $S$.
First, we have 
\begin{equation}\label{eq:q-v-1}
 v\wedge e_{2n+1-i_n}\wedge e_{2n+1-i_{n-1}}\wedge\cdots\wedge e_{2n+1-i_1} 
=(-1)^\beta e_{i'_1}\wedge \cdots\wedge e_{i'_n}\wedge e_{i'_{n+1}}\wedge \cdots\wedge e_{i'_{2n}}	
\end{equation}
where $\beta=\#\{i_{k}\colon i_k>2n+1-i_k\}=n-\alpha$ and $i'_{k}=\min\{i_k,2n+1-i_k\}$ for $1\leq k\leq n$.
Then $(i'_1~i'_2~\cdots~i'_n)$ and $(i'_{n+1}~i'_{n+2}~\cdots~i'_{2n})$  are permutations of $\{1,2,\dots,n\}$ and $\{n+1,n+2,\dots,2n\}$ respectively.
Due to the symmetry $i'_{k}+i'_{2n+1-k}=2n+1$,  
\begin{equation}\label{eq:q-v-2}
e_{i'_1}\wedge \cdots\wedge e_{i'_n}\wedge e_{i'_{n+1}}\wedge \cdots\wedge e_{i'_{2n}}=e_1\wedge e_2\wedge \cdots\wedge e_{2n}.	
\end{equation}
Combining \eqref{eq:q-v-1} and \eqref{eq:q-v-2}, we have 
$$q(v,e_{2n+1-i_n}\wedge e_{2n+1-i_{n-1}}\wedge\cdots\wedge e_{2n+1-i_1})=(-1)^{n-\alpha}.$$
By 
$$e_{2n+1-i_1} \wedge\cdots\wedge e_{2n+1-i_n}=(-1)^{[\frac{n}{2}]}e_{2n+1-i_n}\wedge e_{2n+1-i_{n-1}}\wedge\cdots\wedge e_{2n+1-i_1},$$ 
then
$$
S\colon e_{2n+1-i_1}\wedge e_{2n+1-i_2}\wedge\cdots\wedge e_{2n+1-i_n}\mapsto (-1)^{n-\alpha+[\frac{n}{2}]}e_1\wedge e_2\wedge\cdots\wedge e_n.
$$

Composing with \eqref{eq:e-J}, one has 
\begin{equation}
A(v)=(-1)^{n+[\frac{n}{2}]}v \text{ for all $v\in\CB_n\setminus \CB^c$.}
\end{equation}
Therefore, one of the sets $\CB^+$ and $\CB^-$ is empty and $\#\CB^+\ne \#\CB^-$ as $\CB^+\cup\CB^-\ne \emptyset$.
It implies that $A$ and $-A$ are not conjugate in $\GL_N(\BC)$, and then $\e{n}$ and $\e{n}\otimes\omega$ are not isomorphic. 
\end{proof}

In order to define the local exterior cube $L$-factor over unramified places, let us give the details on the case $n=3$ based on the proof of Lemma \ref{lm:exterior-cube-repn}.
Write $A=S\e{3}J_6$ and we have
\begin{equation}\label{eq:A-1}
A(e_{i}\wedge e_{j}\wedge e_{k})= 
e_{i}\wedge e_{j}\wedge e_{k} \text{ if }\cpair{i,j,k}\cap\cpair{7-i,7-j,7-k}=\emptyset.
\end{equation}
Also for the remaining 12 vectors in the basis $\CB_3$, one has 
\begin{equation}\label{eq:A-2}
\begin{array}{clcl}
A\colon &e_{1}\wedge e_{2}\wedge e_{6}\mapsto e_{2}\wedge e_{3}\wedge e_{4}&& 
e_{1}\wedge e_{3}\wedge e_{6}\mapsto -e_{2}\wedge e_{3}\wedge e_{5}\\
&e_{1}\wedge e_{4}\wedge e_{6}\mapsto -e_{2}\wedge e_{4}\wedge e_{5}&& 
e_{1}\wedge e_{5}\wedge e_{6}\mapsto  e_{3}\wedge e_{4}\wedge e_{5}\\
&e_{1}\wedge e_{2}\wedge e_{5}\mapsto -e_{1}\wedge e_{3}\wedge e_{4}&& 
e_{2}\wedge e_{5}\wedge e_{6}\mapsto -e_{3}\wedge e_{4}\wedge e_{6}.
\end{array}
\end{equation}

Therefore, by \eqref{eq:A-1} and \eqref{eq:A-2}, $\e{3}((\diag\cpair{x,y,z,1,1,1},a,\sig))$ is conjuage to 
\begin{align}
a\cdot & \diag\left\{xyz,xy,xz,yz,x,y,z,1,
\begin{pmatrix}
&xy\\yz &
\end{pmatrix},~\begin{pmatrix}
&xz\\yz &
\end{pmatrix}\right.\nonumber  \\
&\left. 
~\begin{pmatrix}
&xy\\xz &
\end{pmatrix},
~\begin{pmatrix}
&x\\y &
\end{pmatrix},~\begin{pmatrix}
&x\\z &
\end{pmatrix}
,~\begin{pmatrix}
&y\\z &
\end{pmatrix}
 \right\}    
\end{align}
Remark that   the format of the above semisimple element in $\GL(\e{3}\BC^6)$ does not affect the $L$-factor of unramified representations.

\subsection{Local $L$-functions}\label{sec:local-L-factor}
Let $\nu$ be a local place of $F$. Fix a uniformizer $\varpi_\nu$ of $F_\nu$.
If $\nu$ is inert, then $E_\nu$ is the unramified quadratic extension of $F_\nu$
and $\varpi_\nu$ is also a uniformizer of $E_\nu$.
If $\nu$ is split, then $\nu$ split as $\nu_1$ and $\nu_2$ and $E_\nu=F_{\nu}\times F_{\nu}$.
 
Assume that $\nu$ is inert.  
Let $B=TU$ be the Borel subgroup of $\GU_{6}$ consisting of all upper triangular matrices, whose Levi subgroup $T$ consists of elements
$$
t:=\diag\cpair{t_{1},t_2,t_{3},a\bar{t}_{3}^{-1},a\bar{t}_{2}^{-1},a\bar{t}_{1}^{-1}}.
$$
Let $\chi_{i}$ for $1\leq i\leq 3$ be  characters of $E_{\nu}^{\times}$ and $\chi_{0}$ be a character of $F_{\nu}^{\times}$.
Then one has a character
$\chi_{1}\otimes\chi_{2}\otimes\chi_{3}\otimes\chi_{0}$ of $T(F_\nu)$ defined by $t\mapsto \chi_{1}(t_1)\chi_{2}(t_2)\chi_{3}(t_3)\chi_{0}(a)$.
Let $I(\chi_{1}\otimes\chi_{2}\otimes\chi_{3}\otimes\chi_{0})$ be the normalized induced representation of $\GU_{6}(F_{\nu})$. 

Over unramified places $\nu$, assume that $\pi_\nu$ is the  unramified constituent of $I(\chi_{1}\otimes\chi_{2}\otimes\chi_{3}\otimes\chi_{0})$ for unramified characters $\chi_i$.
Referring to \cite{KK08} for instance, we have that the Satake parameter of $\pi_\nu$ is the semi-simple conjugacy class of
\begin{equation}\label{eq:Satake}
t_{\pi_\nu}:=(\diag\cpair{\chi_{1}(\varpi_\nu),\chi_{2}(\varpi_\nu),\chi_{3}(\varpi_\nu),1,1,1},\chi_{0}(\varpi_\nu),\sig)	
\end{equation}
in $(\GL_{6}(\BC)\times\GL_{1}(\BC))\rtimes\apair{\sig}$.
The central character of $\omega_{\pi_\nu}$ is given by 
\begin{equation}\label{eq:center-character}
\omega_{\pi_\nu} =\chi_1 \chi_2 \chi_3\cdot  \chi_0\circ N_{E_\nu/F_\nu}.
\end{equation}

Suppose that $\nu$ is split. Referring to Section 2 \cite{KK08}, $\GU_6(F_\nu)$ is isomorphic to $\GL_6(F_\nu)\times \GL_{1}(F_\nu)$.
Let $I(\otimes^6_{i=1}\chi_i)\otimes\chi_0$ be the normalized representation of $\GL_6(F_\nu)\times \GL_{1}(F_\nu)$.
Its Satake parameter of the unramified constituent is represented by
$$
(\diag\cpair{\chi_{1}(\varpi_\nu),\chi_{2}(\varpi_\nu),\dots,\chi_{6}(\varpi_\nu)},\chi_{0}(\varpi_\nu),1).
$$

The unramified local twisted exterior cube $L$-function of $\pi_\nu$ is defined as the following: 
when $\nu$ is inert, $L(s,\pi_\nu,\e{3})^{-1}$ equals  
\begin{align}
&(1-\chi_1\chi_2\chi_3\chi_0q^{-s})(1-\chi_1\chi_2\chi_0q^{-s})(1-\chi_1\chi_3\chi_0q^{-s})(1-\chi_2\chi_3\chi_0q^{-s})\nonumber \\
\times&(1-\chi_1\chi_0q^{-s})(1-\chi_2\chi_0q^{-s})(1-\chi_3\chi_0q^{-s})(1-\chi_0q^{-s})\nonumber\\
\times&(1-\chi^2_1\chi_2\chi_3\chi^2_0q^{-2s})(1-\chi_1\chi^2_2\chi_3\chi^2_0q^{-2s})(1-\chi_1\chi_2\chi^2_3\chi^2_0q^{-2s})\nonumber\\
\times&(1-\chi_1\chi_2\chi^2_0q^{-2s})(1-\chi_2\chi_3\chi^2_0q^{-2s})(1-\chi_1\chi_3\chi^2_0q^{-2s});    \label{eq:L-factor}
\end{align}
and when $\nu$ is split,
$$
L(s,\pi_\nu,\e{3})=\prod_{i<j<k}(1-\chi_i\chi_j\chi_k\chi_0q^{-s}),
$$
where $\chi_i(\varpi_\nu)$ is written as $\chi_i$ for short
and $q$ is the cardinality of the residue field of $F_\nu$.

\subsection{Unramified calculation}\label{sec:unramified-zeta}


In this section, we will compute the local zeta integral $\CZ_\nu(s,W_\varphi,\phi_\nu)$ over all unramified places. 
Suppose that all the data are unramified, i.e $\nu\notin S$. 
We will drop the subscript $\nu$ in the rest of this  section if no confusion is caused. 

Assume that $\pi_\nu$ is the unramified constituent of $I(\chi_{1}\otimes\chi_{2}\otimes\chi_{3}\otimes\chi_{0})$ for some unramified characters $\chi_i$ as in Section \ref{sec:local-L-factor}.
Let $W_\varphi$ be the unramified Whittaker function $W_\pi$ of  $\pi_\nu$ 
and $\phi_\chi$ be the spherical function in the normalized induced representation 
$$\Ind^{H(F_\nu)}_{P(F_\nu)}|a|^{\frac{3}{2}s}|\det g|^s\omega_\pi\chi^3(a)\omega_\pi\chi^2(\det g)$$
where the values of $W_\pi$ and $\phi_\chi$ at identity are normalized as 1.
Following the normalization \eqref{eq:normalized-zeta-section-2},
one has
\begin{equation}\label{eq:normalized-zeta}
\CZ^*_\nu(s,W_\pi,\phi_\chi)=
\zeta(s+2,\omega_\pi\chi^2)\zeta(2s+2,\omega_\pi\chi^2)
\CZ_\nu(s,W_\pi,\phi_\chi),	
\end{equation}
where $\zeta(\cdot)$ is the local $\zeta$-factor.

\begin{pro} \label{pro:unramified}
Over a unramified  place $\nu$, when $\Re(s)$ is sufficiently large,  one has
\begin{equation}\label{eq:unramified}
\CZ^*_\nu(s,W_\pi,\phi_\nu)=L(\frac{s}{2}+\frac{1}{2},\pi_\nu\otimes\chi_\nu,\e{3}).	
\end{equation}
\end{pro}

If $\nu$ is split, the calculation is the same with Proposition 2.1 in \cite{GR00}.
Hence we will focus on the non-split case here. 
Suppose that $\nu$ is inert in the rest of Section \ref{sec:unramified-zeta}.

Note that the modular character of the standard Borel subgroup $B_H$ of $\GSp_6$ is given by
$\delta_H(g)=|a^4b^6c^{10}|$ for  
\begin{equation}\label{eq:t-abc}
g=\diag\{abc,ac,a,1,c^{-1},b^{-1}c^{-1}\}.	
\end{equation}
Applying the Iwasawa decomposition and the argument similar to Section 2.1 \cite{GR00}, 
we can reduce to 
\begin{align}
\CZ(\cdot)=&\int_{(F^\times)^3}\int_{F}
W_\pi(\iota(v_3(x))\cdot g)\omega_\pi(bc^2)\chi(ab^2c^4)|a^2b^4c^8|^{\frac{s+2}{4}}|a^5b^6c^{10}|^{-1}\ud x\ud g \nonumber \\
=&\int_{(F^\times)^3}W_{\pi}(g)\omega_\pi(bc^2)\chi(ab^2c^4)|a^2b^4c^8|^s|a^5b^6c^{10}|^{-1}K(a)\ud a\ud b\ud c.\label{eq:zeta-unram}
\end{align}
Here the {\it generating function} $K(a)$ is the evaluation of the integral over $x\in F$, which is 
$$
K(a)=\frac{\zeta(s+1,\omega_\pi\chi^2)}{\zeta(s+2,\omega_\pi\chi^2)}
(1-\omega_\pi\chi^2(a)|a|^{s+1}\omega_\pi\chi^2(\varpi)q^{-s-1}).
$$

\subsubsection{Casselman-Shalika formula for $G$}\label{sec:C-S-formula}
To carry on computing $\CZ(\cdot)$ in \eqref{eq:zeta-unram}, we need the  Casselman-Shalika formula for $W_\pi(g)$ of $G(F)$ on the diagonal torus of $G$.
Tamir \cite{T91} explicitly wrote down the Casselman-Shalika formula for the unitary group case.
In \cite{GH06}, Gan and Hundley  gave an interpretation of the Casselman-Shalika formula for the quasi-split groups in terms of certain dual groups.
Following those results, Furusawa and Morimoto \cite{FM13} wrote the Casselman-Shalika formula for $\GU_4$ in terms of the dual group of $\GSp_4$.
Similar to the $\GU_4$ case, we adopt the notation in Page 4135 of \cite{FM13} to give the Casselman-Shalika formula for $\GU_6$ in terms of the dual group  $\GSpin_7(\BC)$ of $\GSp_6$.
All unexplained notation in this section can be found in \cite{FM13}.

Let $T_G$ and $T_H$ be the diagonal tori of $G$ and $H$ respectively,
Denote $\varsigma^*$ to be the homomorphism from ${}^LT^\circ_G$ to ${}^LT_H$ induced from the inclusion map $\varsigma\colon T_H\mapsto T_G$,
where ${}^LT^\circ_G\subset \GL_6(\BC)\times\GL_1(\BC)$ is the connected component of the $L$-group ${}^LT^\circ_G$.
Write 
$$s_\pi=(\diag(\chi_1(\varpi),\chi_2(\varpi),\chi_3(\varpi),1,1,1),\chi_0(\varpi),1)\in {}^LT^\circ_G.$$
By \eqref{eq:Satake}, we have $t_\pi=(s_\pi,\sig)\in {}^LG$ and  $\bar{s}_\pi:=\varsigma^*(s_\pi)$ is in ${}^LT_H$.
Similar to (3.9) in \cite{FM13}, 
if $g$ of form \eqref{eq:t-abc} modulo the compact subgroup $T_H(\Fo)$ is of form
$$
d(n_1,n_2,n_3):=\diag(\varpi^{n_1+n_2+n_3},\varpi^{n_1+n_3},\varpi^{n_1},1,\varpi^{-n_3},\varpi^{-n_2-n_3}) 
$$ 
for some non-negative integers $n_i$,
then $W_\pi(g)$ equals the value of the character of the irreducible representation of $\GSpin_7(\BC)$ of the highest weight given by $n_i$ ($1\leq i\leq 3$) at $\bar{s}_\pi$.
Otherwise, $W_\pi(g)=0$.

For convenience, we may express $W_\pi(g)$ as the value of the character of the irreducible representation of $\Spin_7(\BC)$.
In fact, we may choose a unramified character $\chi'_0$ such that $\chi_1\chi_2\chi_3\chi'^2_0=1$ is the trivial character, and 
set
\begin{equation} \label{eq:s'}
\bar{s}'_\pi=\varsigma^*(\diag(\chi_1(\varpi),\chi_2(\varpi),\chi_3(\varpi),1,1,1),\chi'_0(\varpi),1) 	
\end{equation}
Then $\bar{s}'_\pi$ is of similitude 1 and belongs to $\Spin_7(\BC)$.
As $\omega_\pi$ is given by \eqref{eq:center-character},  we have
\begin{equation}\label{eq:W}
W_\pi(d(n_1,n_2,n_3))=\delta^{\frac{1}{2}}_G(g)\ch(n_2,n_3,n_1)(\bar{s}'_\pi)\omega_\pi(\varpi)^{\frac{n_1}{2}}. 	
\end{equation}
Here 
$\delta_G$ is the modular character of the Borel subgroup $B_G$ of $G$,
 $\ch(n_2,n_3,n_1)$ is  the character of the representation of ${\rm Spin}_7(\BC)$ of highest weight $(n_2,n_3,n_1)$,
and $\ch(n_2,n_3,n_1)(\bar{s}'_\pi)$ is the value of $\ch(n_2,n_3,n_1)$ at $\bar{s}'_\pi$.

Note that
\begin{equation}\label{eq:delta-G}
\delta_G(\diag\{ax,ay,az,\bar{z}^{-1},\bar{y}^{-1},\bar{x}^{-1}\})=
|a|^9|x|_{E}^{5}|y|_{E}^{3}|z|_{E},	
\end{equation}
where $|x|_E=|N_{E/F}(x)|$.
If $g$ is of form \eqref{eq:t-abc} in $T_H(F)$, then $\delta_G(g)=|a^9b^{10}c^{16}|$.

Plugging \eqref{eq:W} and \eqref{eq:delta-G} into \eqref{eq:zeta-unram}, we have 
\begin{align*}
\CZ_\nu(\cdot)= & \frac{\zeta(s+1,\omega_\pi\chi^2)}{\zeta(s+2,\omega_\pi\chi^2)}\sum_{n_1,n_2,n_3=0}^{\infty}\ch(n_2,n_3,n_1)\chi(\varpi)^{n_1+2n_2+4n_3}\omega_\pi(\varpi)^{\frac{n_1}{2}+n_2+2n_3}\\
&\times 
q^{(-s/2-1/2)n_1+(-s-1)n_2+(-2s-2)n_3}(1-(\omega_\pi\chi^2)(\varpi)^{n_1+1}q^{(-s-1)(n_1+1)}).
\end{align*}
Here we use $\ch(n_2,n_3,n_1)$ as shorthand for $\ch(n_2,n_3,n_1)(t_\pi)$.

For simplicity, write $t=\omega_\pi(\varpi)^{\frac{1}{2}}\chi(\varpi)q^{-\frac{s+1}{2}}$.
By definition \eqref{eq:normalized-zeta}, Equation \eqref{eq:unramified} is equivalent to
\begin{equation}\label{eq:CZ-1}
\sum_{n_1,n_2,n_3=0}^{\infty}\ch(n_2,n_3,n_1)  t^{n_{1}+2n_{2}+4n_{3}}
\frac{1-t^{2(n_{1}+1)}}{1-t^{2}}  
=(1-t^4)L(\frac{s}{2}+\frac{1}{2},\pi\otimes\chi,\e{3}).	
\end{equation}

On the other hand,
let $\chi'$ be the  restriction of $\chi_1\otimes\chi_2\otimes\chi_3\otimes\chi'_0$ into $T_H(F)$ where $\chi'_0$ satisfies $\chi_1\chi_2\chi_3\chi'^2_0=1$ and $\pi_H$ be the unramified constituent of $\Ind^{H(F)}_{B_H(F)}\chi'$.
Since $\pi_H$ is of the trivial central character, 
we may also consider $\pi_H$ as a representation of $\PGSp_6$, whose $L$-group is $\Spin_7(\BC)$. 
And the Satake parameter of $\pi_H$ is $\bar{s}'_\pi$ defined in \eqref{eq:s'}.
Hence, we may write $L(\frac{s+1}{2},\pi\otimes\chi,\e{3})$ as follows
\begin{equation}\label{eq:L-St-Spin}
L(\frac{s+1}{2},\pi_H\otimes\chi,\Spin)
\cdot L(s+1,\pi_H\otimes\chi^2,\St)\cdot (1-\omega_\pi\chi^2 q^{-(s+1)}),	
\end{equation}
where  $\St$ and $\Spin$ are the standard and Spin representations of $\Spin_7(\BC)$ associated to the fundamental weights $\omega_1$ and $\omega_3$ respectively. 
For the reader's convenience, we recall  
\begin{align*}
 &L(\frac{s+1}{2},\pi_H\otimes\chi,\Spin)^{-1}\\
 =&
 (1-(\chi_1\chi_2\chi_3)^{\frac{1}{2}}\cdot t)(1-(\chi^{-1}_1\chi_2\chi_3)^{\frac{1}{2}}\cdot t)(1-(\chi_1\chi^{-1}_2\chi_3)^{\frac{1}{2}}\cdot t)\\
 &\times (1-(\chi_1\chi_2\chi^{-1}_3)^{\frac{1}{2}}\cdot t)
 (1-(\chi_1\chi_2\chi_3)^{-\frac{1}{2}}\cdot t)(1-(\chi^{-1}_1\chi_2\chi_3)^{-\frac{1}{2}}\cdot t)\\
 &\times (1-(\chi_1\chi^{-1}_2\chi_3)^{-\frac{1}{2}}\cdot t)(1-(\chi_1\chi_2\chi^{-1}_3)^{-\frac{1}{2}}\cdot t)
\end{align*}
and 
$$
L(s+1,\pi_H\otimes\chi^2,\St)^{-1}=(1-t^2)\prod^{3}_{i=1}(1-\chi_it^2)(1-\chi^{-1}_it^2).
$$
Remark that our above calculation is independent of the choices of $\chi'_0$ as only $\chi'^{2}_0$ is involved.

Following from \eqref{eq:CZ-1} and \eqref{eq:L-St-Spin}, Equation \eqref{eq:unramified} is equivalent to
\begin{align}
&\sum_{n_1,n_2,n_3=0}^{\infty}\ch(n_2,n_3,n_1)  t^{n_{1}+2n_{2}+4n_{3}}
\frac{1-t^{2(n_{1}+1)}}{1-t^{2}}\nonumber \\ 
=&(1-t^2)(1-t^4)L(\frac{s+1}{2},\pi_H\otimes\chi,\Spin)
L(s+1,\pi_H\otimes\chi^2,\St).	\label{eq:CZ-2}
\end{align}

Now, applying the Poincar\`e identity for the right hand side of \eqref{eq:CZ-2}, we have
\begin{align*}
 & L(s+1,\pi_H\otimes\chi^2,\St)=\sum^\infty_{k=0}\ch_{\Sym^k(\omega_1)}(\bar{s}'_\pi)t^{2k}\\
& L(\frac{s+1}{2},\pi_H\otimes\chi,\Spin)=
\sum^\infty_{m=0}\ch_{\Sym^m(\omega_3)}(\bar{s}'_\pi)t^m,
\end{align*}
where $\Sym^n$ is the symmetric $n$-th power operation and $\ch_{\Sym^n}$ is the character of the corresponding representation. 
Following Theorem 3.2 and the table in Page 13 of \cite{Br83}, one has the decomposition
\begin{align*}
\ch_{\Sym^k(\omega_1)}(\bar{s}'_\pi)
=&\sum_{\substack{2i+k_1=k\\ i,~k_1\geq 0}}\ch(k_1,0,0)(\bar{s}'_\pi)=:A_k\\
\ch_{\Sym^m(\omega_3)}(\bar{s}'_\pi)
=&\sum_{\substack{2j+m_1=m\\ j,~m_1\geq 0}}\ch(0,0,m_1)(\bar{s}'_\pi)=:B_m.
 \end{align*}
Set  $C_r:=\sum_{\substack{2k+m=r\\ k,~m\geq 0}}A_k\cdot B_m$ and then 
\begin{align}
 & (1-t^2)(1-t^4)L(s+1,\pi_H\otimes\chi^2,\St)L(\frac{s+1}{2},\pi_H\otimes\chi,\Spin)\nonumber \\
=&\sum^\infty_{r=0}(C_{r}-C_{r-2}-C_{r-4}+C_{r-6})t^{r}. \label{eq:L-C}
\end{align}
Default $C_{r}=0$ for $r<0$.

By simple manipulations, we have
\begin{equation}\label{eq:C}
C_{r}-C_{r-2}-C_{r-4}+C_{r-6}=\sum_{\substack{2k+m=r\\ k,~m\geq 0}}\ch(k,0,0)(\bar{s}'_\pi)\cdot \ch(0,0,m)(\bar{s}'_\pi).	
\end{equation}
Hence, combining \eqref{eq:L-C} and \eqref{eq:C} and continuing \eqref{eq:CZ-2}, 
to prove Proposition \ref{pro:unramified}, it is enough to show 
\begin{align}
&\sum_{n_1,n_2,n_3=0}^{\infty}\ch(n_2,n_3,n_1)  t^{n_{1}+2n_{2}+4n_{3}}
\frac{1-t^{2(n_{1}+1)}}{1-t^{2}}\nonumber \\ 
=&(\sum^\infty_{m=0}\ch(0,0,m)t^{m})(\sum^\infty_{k=0}\ch(k,0,0)t^{2k}).	\label{eq:L-characters}
\end{align}

\subsubsection{ Pieri's formula for $\Spin_7(\BC)$}
In this section, we apply an analogue of Pieri's formula for the $\Spin$ group 
to compute the character $\ch(0,0,m)\cdot\ch(k,0,0)$ and complete the proof of Proposition \ref{pro:unramified}. 
Denote $\rho(k_1,k_2,k_3)$ to be the irreducible representation of $\Spin_7(\BC)$ with the highest weight $\sum^3_{i=1}k_i\omega_i$. 
The analogue of Pieri's formula  is applied to decompose
$$
\rho(\alpha)\otimes\rho(k,0,0)=\sum_{\beta} m_{\alpha\vert\beta} \rho(\beta),
$$
where $\alpha$ and $\beta$ are dominant weights and $m_{\alpha\vert\beta}$ is the multiplicity.
In \cite{BFG99}  Bump, Friedberg and Ginzburg studied this type of formula for $\Spin_9(\BC)$ to compute the local factors $L(s_1,\pi,\omega_1)L(s_2,\pi,\omega_3)$ of $\GSp_8$. 
We adopt their approach in Page 756--760 \cite{BFG99} to our case and obtain the following lemma.

\begin{lm}\label{lm:decomposition}
We have the following decompositions:
\begin{align}
 &\rho(0,0,2m)\otimes\rho(k,0,0)\nonumber\\
=&\sum^1_{\eps=0}\sum_{r=0}^{\min\{k-\eps,m-\eps\}}
 \sum_{a=0}^{\min\{k-\eps-r,r\}} \rho(k-\eps-a-r,r-a,2(m-r+a)) \label{eq:lm-Pieri-even}
\end{align}
and
\begin{align}
 &\rho(0,0,2m+1)\otimes\rho(k,0,0)\nonumber\\
=&\sum^1_{\eps=0}\sum_{r=0}^{\min\{k-\eps,m\}}
 \sum_{a=0}^{\min\{k-\eps-r,r\}} \rho(k-\eps-a-r,r-a,2(m-r+a)+1). \label{eq:lm-Pieri-odd}
\end{align}
\end{lm}

\begin{proof}
The proof of this lemma is similar to Proposition 3.5 \cite{BFG99}.
Our goal is to obtain the decomposition $\rho(\alpha)\otimes\rho(k,0,0)$  for $\alpha=(0,0,n)$, i.e., 
\begin{equation}\label{eq:lm-decomposition}
\rho(0,0,n)\otimes\rho(k,0,0)=\sum_\beta m_{\alpha\vert\beta}\rho(\beta).	
\end{equation}
Note that $\rho(k_1,k_2,k_3)$ factors through $\SO_{7}(\BC)$ when $k_3$ is even and it is a spinor representation when $k_3$ is odd.
If $\beta=(k_1,k_2,k_3)$ occurs in \eqref{eq:lm-decomposition} then $k_3$ has the same parity with $n$.

First, let us verify \eqref{eq:lm-Pieri-even}.
For the convenience of the reader, we recall Sundaram's algorithm in \cite{Su} for $m_{\alpha\vert\beta}$ when $n$ is even in \eqref{eq:lm-decomposition}.
For each irreducible representation $\rho(\beta)$ with $\beta=(k_1,k_2,k_3)$ with $m_{\alpha\vert\beta}\ne 0$, we have $k_3$ is even in this case and  
 associate $\beta$ with the partition  
$$
\tilde{\beta}:=[k_1+k_2+ k_3/2, k_2+k_3/2,  k_3/2],
$$ 
which  also gives a Young diagram.
So  the associated partition of $\alpha=(0,0,2m)$ is $\tilde{\alpha}=[m,m,m]$.
Then $m_{\alpha\vert\beta}$ equals the number of partitions  $\tilde{\nu}$
such that
\begin{itemize}
	\item $\tilde{\nu}$ are contained in $\tilde{\alpha}$ and $\tilde{\beta}$ as a Young diagram;
	\item $\tilde{\alpha}\bks \tilde{\nu}$ and $\tilde{\beta}\bks \tilde{\nu}$ are horizontal strips;
	\item either (i) $|\tilde{\alpha}\bks \tilde{\nu}|+|\tilde{\beta}\bks \tilde{\nu}|=k$
	or (ii) $\ell(\tilde{\nu})=3$ and $|\tilde{\alpha}\bks \tilde{\nu}|+|\tilde{\beta}\bks \tilde{\nu}|=k-1$. 
\end{itemize}
Here $\ell(\tilde{\nu})$ is the length of the partition $\tilde{\nu}$ and $|\tilde{\alpha}\bks\tilde{\nu}|$ is the weight of the partition, that is, the total number of boxes in the horizontal strip $\tilde{\alpha}\bks\tilde{\nu}$.

In our case,  $\tilde{\beta}$ and $\tilde{\nu}$ are partitions of the length $\leq 3$. 
Assume that $\tilde{\nu}$ satisfies the above conditions for $\tilde{\alpha}$ and $\tilde{\beta}$.
Since $\tilde{\nu}$ is contained in $\tilde{\alpha}$ and $\tilde{\alpha}\bks \tilde{\nu}$ is horizontal, we may assume 
$$
\tilde{\nu}=[m,m,m-r]
$$ 
with $0\leq r\leq m$.
By the other two conditions on $\tilde{\nu}$ and $\tilde{\beta}$,
$\tilde{\beta}$ must be of form 
$$
\tilde{\beta}=[m+k-\eps-r-a,m,m-r+a]
$$
where $\eps\in\{0,1\}$ $0\leq a\leq \{r,k-\eps-r\}$.
Note that $\eps=1$ only happens when $r\leq m-1$, which implies $r\leq m-\eps$. 
Then we obtain the constraints of $a,r,\eps$ on the right hand side of \eqref{eq:lm-Pieri-even}.

Moreover, given $k$ and $m$, the partition $\tilde{\beta}$ is distinct when varying $a$ and $r$. It follows that the decomposition is multiplicity-free, i.e., $m_{\alpha\vert\beta}\leq 1$. 
Since the representation  corresponding to $\tilde{\beta}$ is $\rho(k-\eps-r-a,r-a,2(r-a))$,
 we conclude \eqref{eq:lm-Pieri-even}.

Finally, to prove \eqref{eq:lm-Pieri-odd}, we apply Proposition 3.4 of \cite{BFG99} for the spinor representations and  obtain the description of $\tilde{\nu}$ and $\tilde{\beta}$.
Then \eqref{eq:lm-Pieri-odd} follows from a similar argument of \eqref{eq:lm-Pieri-even}. We omit the details here. 
\end{proof}

\begin{proof}[Proof of Proposition \ref{pro:unramified}]
In Section \ref{sec:C-S-formula}, we reduce Proposition \ref{pro:unramified} to verify Equation \eqref{eq:L-characters}, which is equivalent to
\begin{align}
&\sum_{n_1,n_2,n_3=0}(\sum^{n_1}_{i=0}t^{n_{1}+2i+2n_{2}+4n_{3}})\ch(n_2,n_3,n_1) \nonumber\\
=&\sum_{k,m\geq 0}\sum_{a,r,\eps}\ch(k-\eps-a-r,r-a,m-2r+2a)t^{m+2k}\label{eq:CZ-3}
\end{align}
where $a,r,\eps$ are given in \eqref{eq:lm-Pieri-even} and \eqref{eq:lm-Pieri-odd} according to the parity of $m$.
  
By comparing the coefficients in $\BC[t]$ of characters $\ch(n_2,n_3,n_1)$ on the both sides,
it suffices to show that the coefficient of $\ch(n_2,n_3,n_1)$ in the right hand side \eqref{eq:CZ-3}
is equal to $\sum^{n_1}_{i=0}t^{n_{1}+2i+2n_{2}+4n_{3}}$.

Suppose that $n_1$ is even.
Let us collect all the coefficients of the character of $\rho(k-\eps-a-r,r-a,m'-r+a)$ satisfying 
$$
(k-\eps-a-r,r-a,m-2r+2a)=(n_2,n_3,n_1),
$$
which is equivalent to
\begin{equation}\label{eq:k-m-r}
k=n_2+n_3+\eps+2a,\quad m=n_1+2n_3,\quad r=n_3+a.	
\end{equation}
If $n_1$ is even, by \eqref{eq:k-m-r} the constraints on $a,r$ are equivalent to $0\leq a\leq n_1/2-\eps$;
If $n_1$ is odd, they are equivalent to $0\leq a\leq \frac{n_1-1}{2}$.
Then the coefficient of $\ch(n_2,n_3,n_1)$ in the right hand side \eqref{eq:CZ-3} is given by
$$
\begin{cases}
\sum^1_{\eps=0}\sum_{a=0}^{\frac{n_1}{2}-\eps}t^{n_1+2n_2+4n_3+2(2a+\eps)}& \text{ if $n$ is even}\\
\sum^1_{\eps=0}\sum_{a=0}^{\frac{n_1-1}{2}}t^{n_1+2n_2+4n_3+2(2a+\eps)}& \text{ if $n$ is odd},
\end{cases}
$$
which equals $\sum^{n_1}_{i=0}t^{n_{1}+2i+2n_{2}+4n_{3}}$ in all cases.

Hence, we complete the proof of Proposition \ref{pro:unramified}.
\end{proof}

\section{Poles of exterior cube $L$-function}\label{sec:analytic-L}

In this section, we assume that $F$ is a local field and show that the local zeta integrals $\CZ_\nu(s,W_\varphi,\phi)$ over all local places has the desired analytic properties such as
the convergence and their meromorphic continuation.
When $G$ is split over $F$, the case has been considered in \cite{GR00}.
It suffices to prove the analogous results for the non-split case.
Throughout this section, assume that $E$ is not split and then $G(F)\cong \GU_6(F)$ is quasi-split.

\begin{lm}\label{lm:local-analytic}
Let $\phi$ be a standard section and $W_\varphi\in \CW(\pi,\psi)$ be a smooth Whittaker function.
Then
\begin{enumerate}
	\item \label{item:LA-convergent} $\CZ(s,W_\varphi,\phi)$ converges absolutely for $\Re(s)$ sufficiently large;
	\item \label{item:LA-meromorphic} It continues to a meromorphic function in the whole plane;
	\item \label{item:LA-nonzero} For any $s_0\in \BC$, there exists a choice of a smooth function $W\in \CW(\pi,\psi)$ and a $K_H$-finite function $\phi$ such that $\CZ(s,W,\phi)$ is not zero at $s=s_0$.  
	\item \label{item:LA-nonarchimedean}
	If $F$ is a nonarchimedean local field, $\CZ(s,W_\varphi,\phi)$ is a rational function in $q^{-s}$ and can be made non-zero constant for some $W\in \CW(\pi,\psi)$.
\end{enumerate}
\end{lm}

The idea to prove this lemma is the same as in the proof of Proposition 2.2 of \cite{GR00}.
For our case $\GU_6$, the main differences are the manipulations on the unipotent subgroups for $\GU_6$ on the way to prove Item \eqref{item:LA-nonzero},
and the asymptotic expansion  of smooth Whittaker functions $W_\varphi$ used in the proof,
which is given in the following Lemma.  
So we omit the proof of Lemma \ref{lm:local-analytic} here. 

\begin{lm}\label{lm:Whittaker-expansion}
There exists a finite set $X$ of finite functions on $(E^\times)^2\times F^\times$ such that for any smooth function $W$ in $\CW(\pi,\psi)$, for every $\xi\in X$ there exist Schwartz-Bruhat functions $\theta_\xi\in \CS( E^2 \times F)$ such that
\begin{equation}\label{eq:Whittaker-expansion}
W(t)=\sum_{\xi\in X}\theta_\xi(a,b,c)\xi(a,b,c).
\end{equation}
where $t=\diag\{abc,ac,a,1,\bar{c}^{-1},(\bar{b}\bar{c})^{-1}\}\in T_G(F)$.
\end{lm} 
The proof of Lemma \ref{lm:Whittaker-expansion} is similar to many cases  in literatures.
For example, if $F$ is a non-archimedean local field, see Proposition 2.2 \cite{So93} for non-archimedean fields and Section 4 in \cite{So95} for archimedean cases.
Also such an expansion has been considered in \cite{LM09} for connected split reductive groups over non-archimedean fields.
Here we omit the details of the proof.

\begin{thm}
Let $\pi$ be an irreducible generic cuspidal automorphic representation of $\GU_6(\BA)$. Then
$L^S(s,\pi\otimes\chi,\e{3})$ is entire unless $\omega_\pi\chi^2$ is a nontrivial quadratic character of $\GL_1(F)\bks \GL_1(\BA)$,
in which case $L^S(s,\pi\otimes\chi,\e{3})$ can have at most a simple pole at $s=0$ or $s=1$.
\end{thm}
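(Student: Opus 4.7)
The plan is to read the pole structure of $L^S(u,\pi\otimes\chi,\e{3})$ directly off the integral representation. Combining Proposition \ref{pro:Eulerian}, Proposition \ref{pro:unramified}, and the normalization \eqref{eq:normalized-zeta-section-2}, for $\Re(s)$ sufficiently large one has
$$
L^S\!\left(\tfrac{s+1}{2},\pi\otimes\chi,\e{3}\right)=\frac{\CZ^*(s,\varphi_\pi,\phi_{\chi_\pi})}{\prod_{\nu\in S}\CZ_\nu(s,W_\varphi,\phi_{\chi_\nu})},
$$
and both sides continue meromorphically to $s\in\BC$: the right-hand side by Lemma \ref{lm:local-analytic}, and the left by the meromorphy of the normalized Eisenstein series $E^*(\phi_{\chi_\pi},\lam_s)$ built into $\CZ^*$.

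By Lemma \ref{lm:local-analytic}(3), at any prescribed $s_0\in\BC$ one can choose local data $W_{\varphi,\nu}$ and $\phi_{\chi_\nu}$ for $\nu\in S$ so that $\prod_{\nu\in S}\CZ_\nu(s,\cdots)$ is a nonzero holomorphic function near $s_0$. Therefore the poles of $L^S(u,\pi\otimes\chi,\e{3})$ with $u=\tfrac{s+1}{2}$ are contained in the poles of $\CZ^*$, which in turn lie among those of $E^*(\phi_{\chi_\pi},\lam_s)$ as classified by Lemma \ref{lm:pole-Eisenstein}. Supplementing with the functional equation $s\mapsto -s$ of the normalized Eisenstein series (equivalently $u\mapsto 1-u$) to cover $\Re(s)\le 0$, one deduces: if $\omega_\pi^2\chi^4\ne 1$ then $L^S(u,\cdot)$ is entire; if $\omega_\pi^2\chi^4=1$ and $\omega_\pi\chi^2\ne 1$ then at most simple poles can occur only at $u=0,1$; and if $\omega_\pi\chi^2=1$ then additional simple poles at $u=-\tfrac12,\tfrac32$ are a priori permitted.

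It remains to eliminate the apparent extra poles when $\omega_\pi\chi^2=1$, where the theorem asserts $L^S$ is entire. The residues of $E^*(\phi_{\chi_\pi},\lam_s)$ at $s=1$ and $s=2$ in this regime are one-dimensional automorphic representations of $H(\BA)=\GSp_6(\BA)$ arising from the residual spectrum of the Siegel Eisenstein series; each factors through a character of the similitude quotient of $H$. Pairing such a residue against the cuspidal form $\varphi_\pi^\iota$ produces a period integral on $Z(\BA)H(F)\bks H(\BA)$ that is an $H(\BA)$-invariant linear functional on $\pi$ twisted by a character of the similitude factor. A combination of the genericity of $\pi$ with a compatibility check on central and similitude characters forces this period to vanish, which renders $\CZ^*$ holomorphic at $s=\pm1,\pm2$ and proves $L^S(u,\pi\otimes\chi,\e{3})$ is entire when $\omega_\pi\chi^2=1$.

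The hardest step will be this residue-vanishing: since $\varphi_\pi|_{H(\BA)}$ is not cuspidal on $H$, one cannot kill the pairing against a character of $H(\BA)$ simply by invoking cuspidality on $H$. Instead, one has to identify each residual representation explicitly as a similitude character of $\GSp_6(\BA)$ and rule out the corresponding unitary period by exploiting the genericity of $\pi$ together with the relation $\omega_\pi=\chi^{-2}$ in this regime. This parallels the $\GL_6$ argument in \cite{GR00} but is more delicate because of the quasi-split unitary structure and the Galois twist coming from $E/F$.
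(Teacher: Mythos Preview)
Your overall strategy matches the paper's proof: bound the poles of $L^S$ by those of $\CZ^*$, locate the latter among the poles of $E^*$ via Lemma \ref{lm:pole-Eisenstein}, and then kill the extra poles when $\omega_\pi\chi^2=1$ by analyzing the residues of the Siegel Eisenstein series. The use of Lemma \ref{lm:local-analytic}(3) to make the ramified factors nonvanishing at a prescribed point is exactly how the paper proceeds.

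However, the residue analysis in the case $\omega_\pi\chi^2=1$ contains a factual error. Only the residue of $E^*(\phi_{\chi_\pi},\lam_s)$ at $s=2$ is a one-dimensional character of $H(\BA)$; the residue at $s=1$ is \emph{not} one-dimensional. By Lemma 5.5 of \cite{G95}, this residue is (up to a constant) a degenerate Eisenstein series $E_Q(\phi'_{\chi_Q},\lam_{Q,s_0})$ induced from the Klingen parabolic $Q=M_QV_Q$ of $\GSp_6$. Pairing $\varphi_\pi^\iota$ against such an object is not a similitude-character period, and no argument about genericity or central-character compatibility by itself forces it to vanish. The paper's actual mechanism is to unfold $E_Q$ for $\Re(s)$ large and show that the inner integral $\int_{Z_H(\BA)Q(F)\bks Q(\BA)}\varphi_\pi(g)\lam_{Q,s}\phi'_{\chi_Q}(g)\,\ud g$ is identically zero, following \cite[Lemma 5.1]{G95} and \cite[Theorem 1]{DP16}; meromorphic continuation then gives the vanishing for all $s$.

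Separately, even at $s=2$ where the residue \emph{is} a character, the vanishing of the symplectic period
\[
\int_{Z_H(\BA)H(F)\bks H(\BA)}\varphi_\pi(g)\,\omega_\pi(g)^{-1}\,\ud g
\]
is not a formal consequence of genericity plus a central-character check: $\varphi_\pi$ restricted to $H(\BA)$ is not cuspidal on $H$, so one cannot simply invoke orthogonality to constants. The paper appeals to \cite[Theorem 1]{DP16} (vanishing of symplectic periods for cuspidal representations of quasi-split $\GU_{2n}$), which is a genuine input rather than a compatibility check. Your proposal identifies that this step is the crux, but the sketch of how to carry it out is both incomplete at $s=2$ and based on a wrong description of the residue at $s=1$.
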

\begin{proof}
Suppose that $\phi_{\chi_\pi}$ is  a standard $K_H$-finite section and unramified outside of $S$.
First, let us study the possible poles of $\CZ^*(s,\varphi_\pi,\phi_{\chi_\pi})$.

Following Lemma \ref{lm:pole-Eisenstein}, the normalized Eisenstein series $E^*(\phi_{\chi_\pi},\lam_s)$ has at most a simple pole at $s=1$ and $s=2$ for $\Re(s)\geq 0$, so is the normalized zeta integral $\CZ^*(s,\varphi_\pi,\phi_{\chi_\pi})$.

When  $E^*(\phi_{\chi_\pi},\lam_s)$ has a pole at $s=2$, its residue is a character of $H(\BA)$ (see Lemma 5.5 \cite{G95} for instance).
Similar to the proof of Theorem 1 in \cite{DP16}, the following integral is identically zero,
$$
\int_{Z_H(\BA)H(F)\bks H(\BA)}\varphi_\pi(g)\omega_\pi(g)^{-1}\ud g\equiv 0.
$$
Hence for all $\omega_\pi$ and $\chi$, we have $\CZ^*(s,\varphi_\pi,\phi_{\chi_\pi})$ is entire at $s=2$.

Suppose that $\omega_\pi\chi^2=1$.
For this case, the argument is similar to Lemma 5.1 of \cite{G95},  
Denote $Q=M_QV_Q$ to be the Klingen mirabolic subgroup of $H$, whose Levi subgroup $M_Q$ consists of elements of form
$$
m_Q(a,g):=\begin{pmatrix}
a&&\\&g&\\&&\lam(g)a^{-1} 	
\end{pmatrix}
$$
where $a\in\GL_1$, $g\in \GSp_4$ and $\lam(g)$ is the similitude of $g$.
Define $\chi_Q(a,g)=\chi\circ\lam(g)$ to be the character of $M_Q(\BA)$.
Similar to Section \ref{sec:global}, one can form an forms the Eisenstein series on $H(\BA)$ for $\lam_{Q,s}\in X_{M_Q}$ 
$$
E_Q(\phi_{\chi_Q},\lam_{Q,s})(g)=\sum_{\gamma\in Q(F)\bks H(F)}\lam_{Q,s}\phi_{\chi_Q}(\gamma g)
$$
where $\phi_{\chi_Q}\in \CA(V_Q(\BA)M_Q(F)\bks H(\BA))_{\chi_Q}$.

By Lemma 5.5 \cite{G95} for instance, when $\omega_\pi\chi^2=1$, one has the residue of $E^*(\phi_{\chi_\pi},\lam_s)$ at $s=1$ is proportional to $E_Q(\phi'_{\chi_Q},\lam_{Q,s_0})$ for some $\phi'_{\chi_Q}$ and $s_0\in \BC$.
It suffices to show that
\begin{equation}\label{eq:vanishing}
\int_{Z_H(\BA)H(F)\bks H(\BA)}\varphi_\pi(g)E_Q(\phi'_{\chi_Q},\lam_{Q,s})(g)\ud g\equiv 0
\end{equation}
for all $\phi_{\chi_Q}$ and $s$.

For $\Re(s)$ large, we may unfold the Eisenstein series $E_Q$ to obtain
$$
\int_{Q(\BA)\bks H(\BA)}\int_{Z_H(\BA)Q(F)\bks Q(\BA)}\varphi_\pi(g)\lam_{Q,s}\phi'_{\chi_Q}(g)\ud g.
$$
Following  a similar argument of \cite[Lemma 5.1]{G95} or \cite[Theorem 1]{DP16}, we have the inner integral $\int_{Z_H(\BA)Q(F)\bks Q(\BA)}$ is identically zero.
By meromorphic continuation on $s$, we obtain \eqref{eq:vanishing}. 
Thus, if $\omega_\pi\chi^2=1$,  $\CZ^*(s,\varphi_\pi,\phi_{\chi_\pi})$ is entire at $s=1$.

Overall,  
if $\omega_\pi\chi^2=1$ or $\omega^2_\pi\chi^4\ne 1$, then $\CZ^*(s,\varphi_\pi,\phi_{\chi_\pi})$ is entire;
if $\omega^2_\pi\chi^4=1$ and $\omega_\pi\chi^2\ne 1$, then $\CZ^*(s,\varphi_\pi,\phi_{\chi_\pi})$ has at most a simple pole at $s=-1$ or $s=1$. 
By Propositions \ref{pro:Eulerian} and \ref{pro:unramified}, we have
\begin{equation}\label{eq:CZ-L}
\CZ^*(s,\varphi_\pi,\phi_{\chi_\pi})=\prod_{\nu\in S}\CZ^*_\nu(s,W_\pi,\phi_\nu)
\cdot L^S(\frac{s}{2}+\frac{1}{2},\pi\otimes\chi,\e{3}).	
\end{equation}
According to Lemma \ref{lm:local-analytic}, for each $\nu\in S$ there exist $\phi_{\nu}$ and $W_\nu\in \CW(\pi_\nu,\psi_\nu)$ such that $\CZ^*(s,\varphi_\pi,\phi_{\chi_\pi})$ is nonzero. 
This theorem follows from the locations of the poles of $\CZ^*(s,\varphi_\pi,\phi_{\chi_\pi})$ mentioned above and \eqref{eq:CZ-L}.
\end{proof}

Next, analogous to Theorem 3.3 of \cite{GR00}, we may use the automorphic period integral to capture the residue of $L^S(s,\pi\otimes\chi,\e{3})$ at $s=1$ for the $\GU_6$ case,
when $\omega_\pi\chi^2$ is a non-trivial quadratic character.
We apply Siegel-Weil formula in \cite{KR94} to replace the residue of $E(\phi_{\chi_\pi},\lam_s)$ at $s=1$ in terms of the theta correspondence, which part is exactly the same as the $\GL_6$ case in \cite{GR00}.
Then one may obtain easily an analogue of Theorem 3.3 of \cite{GR00}.

Let $\theta_f$ be the theta function of the metaplectic group $\wt{\Sp}_{12}(\BA)$,
where $f\in\CS(\BA^6)$ the Schwartz space on $\BA^6$.
Recall that $E^+$ is the quadratic extension of $F$ associated to $\omega_\pi\chi^2$,
and denote $\RO_2$ to be the orthogonal group defined by the norm form on $E^+$,
whose $F$-points are the $\BZ_2$-extension of the norm one one elements of $E^+$.
Referring to \cite[Corollary 6.3]{KR94},
$$
\Res_{s=1}E(\phi_{\chi_\pi},\lam_s)(g)=\int_{\RO_2(F)\bks \RO_2(\BA)}\theta_f(g,h)\ud h.
$$ 
Following  \eqref{eq:CZ-L}, we use the automorphic period integrals to characterize the poles of the exterior cube $L$-functions.
\begin{cor}\label{cor:pole}
Assume that $\omega_\pi\chi^2$ is a non-trivial quadratic character.
If $L^S(s,\pi\otimes\chi,\e{3})$ has a pole at $s=1$, then there exits a choice of $\varphi_\pi\in\pi$ and $f\in \CS(\BA^6)$ such that
$$
\int_{\Sp_6(F)\bks\Sp_6(\BA)}\int_{\RO_2(F)\bks \RO_2(\BA)}\varphi_\pi(g)\theta_f(g,h)\ud g\ud h
$$
is nonzero.
\end{cor}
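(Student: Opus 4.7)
The strategy mirrors the proof of Theorem 3.3 in \cite{GR00}: one extracts the desired period by taking residues at $s=1$ in the Rankin--Selberg identity \eqref{eq:CZ-L}. By Lemma \ref{lm:local-analytic}(\ref{item:LA-nonzero}), for each ramified place $\nu\in S$ I can choose $W_\nu$ and $\phi_\nu$ so that $\CZ^*_\nu(s,W_\pi,\phi_\nu)$ is finite and non-vanishing at $s=1$. The hypothesis that $L^S(s,\pi\otimes\chi,\e{3})$ has a pole at $s=1$ then forces, via \eqref{eq:CZ-L}, a pole of $\CZ^*(s,\varphi_\pi,\phi_{\chi_\pi})$ at $s=1$; since $\varphi_\pi^\iota$ is cuspidal, and hence of rapid decay, the residue must be carried entirely by the Eisenstein series factor, yielding
\[
\Res_{s=1}\CZ^*(s,\varphi_\pi,\phi_{\chi_\pi})=\int_{Z(\BA)H(F)\bks H(\BA)}\varphi_\pi^\iota(g)\,\Res_{s=1}E^*(\phi_{\chi_\pi},\lam_s)(g)\,\ud g.
\]

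Next I would substitute the Siegel--Weil formula of Kudla--Rallis recalled just above the corollary. Since $\omega_\pi\chi^2$ is a non-trivial quadratic character, it corresponds by class field theory to the quadratic extension $E^+/F$, and the residue of $E(\phi_{\chi_\pi},\lam_s)$ at $s=1$ equals $\int_{[\RO_2]}\theta_f(g,h)\,\ud h$ for a suitable Schwartz function $f\in\CS(\BA^6)$ built from $\phi_{\chi_\pi}$. The normalizing factors $L^S(s+2,\omega_\pi\chi^2)L^S(2s+2,\omega_\pi^2\chi^4)$ in the definition of $E^*$ and $\CZ^*$ specialize at $s=1$ to a non-zero finite constant. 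Interchanging the orders of integration (which is justified by absolute convergence of the theta integral paired against a cusp form) leads to the proportionality
\[
\Res_{s=1}L^S(s,\pi\otimes\chi,\e{3})\;\propto\;\int_{Z(\BA)H(F)\bks H(\BA)}\int_{\RO_2(F)\bks \RO_2(\BA)}\varphi_\pi^\iota(g)\theta_f(g,h)\,\ud h\,\ud g,
\]
with a non-zero constant of proportionality. If the residue on the left is non-zero, the period on the right must be non-zero for some choice of $\varphi_\pi$ and $f$.

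It remains to reshape the outer integration as $\int_{\Sp_6(F)\bks\Sp_6(\BA)}$. Using the almost-direct product decomposition $\GSp_6 = Z_H\cdot\Sp_6$ together with the similitude transformation law of $\theta_f$, which is exactly balanced by the central character of $\varphi_\pi^\iota$, the integrand descends to $\Sp_6(F)\bks\Sp_6(\BA)\times[\RO_2]$. The inner involution $\iota$ on $H$ can be absorbed by a change of variable in $g$ and a corresponding adjustment of $f$, replacing $\varphi_\pi^\iota$ by a cusp form still lying in $\pi$. The main technical obstacle, already present in the $\GL_6$ case of \cite{GR00}, is precisely this bookkeeping of similitude and central characters, together with the passage of the theta kernel from $\wt{\Sp}_{12}$ to a well-defined kernel on $\GSp_6\times\RO_2$ compatible with the Siegel--Weil residue; once this is in place, the non-vanishing of the residue of $L^S(s,\pi\otimes\chi,\e{3})$ produces a choice of $\varphi_\pi$ and $f$ for which the integral in the statement is non-zero.
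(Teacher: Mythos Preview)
Your proposal is correct and follows exactly the approach the paper outlines: take residues in \eqref{eq:CZ-L}, use Lemma~\ref{lm:local-analytic}\eqref{item:LA-nonzero} to make the ramified local factors non-vanishing at $s=1$, and then invoke the Siegel--Weil identity from \cite[Corollary~6.3]{KR94} to replace $\Res_{s=1}E(\phi_{\chi_\pi},\lam_s)$ by the $\RO_2$-average of the theta kernel, precisely as in \cite[Theorem~3.3]{GR00}. The paper does not spell out the similitude bookkeeping or the absorption of $\iota$ either; your extra paragraph on those points is a welcome elaboration but not a deviation in strategy.
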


\section{Global tempered $L$-packets of $\GU_n$}\label{sec:endoscopic-classificiation}

Different from Corollary \ref{cor:pole}, in next two sections we will use the endoscopic classification and the automorphic induction to characterize the existence of poles of the exterior cube $L$-functions. 
To do that, first we extend the endoscopic classification of $\RU_n$ in \cite{Mk15} to the quasi-split unitary similitude groups $\GU_n$, 
via the restriction of representations from $\GU_n$ to $\RU_n$, following \cite{Xu16} and \cite{Xu17}.

In general, let $\CG$ be a connected reductive group defined over $F$ and $Z_{\CG}$ be the center of $\CG$.
For a character $\chi$  of $Z_{\CG}(F)\bks Z_{\CG}(\BA)$,
denote by $L^2(\CG(F)\bks\CG(\BA),\chi)$  the space of $\chi$-equivariant $L^2$-functions over $\CG(F)\bks\CG(\BA)$, 
and $L^2_\disc(\CG,\chi)$ to be its discrete spectrum.
Let $\CA_\disc(\CG)$ the set of equivalence classes of irreducible unitary representations $\pi$ of $\CG(\BA)$ occurring in the discrete spectrum $L^2_\disc(\CG,\chi)$. 
The theory of endoscopic classification for classical groups  is to parameterize the discrete automorphic representations by means of the global Arthur parameters,
which can be realized as certain automorphic representations of general linear groups (their Langlands functorial lifts).

In \cite{A13}, Arthur established the local Langlands correspondence for irreducible admissible representations over local fields and the endoscopic classification of discrete automorphic representations when $\CG$ are symplectic groups and quasi-split orthogonal groups.
Mok (\cite{{Mk15}}) extended the results of Arthur to the quasi-split unitary group case. 

Let $\CG\subseteq \wt{\CG}$ be two connected reductive groups and of the same derived group  over $F$.  
By using the endoscopic theory, Xu gave a conjectural description of local $L$-packets of $\wt{\CG}$ based on the local $L$-packets of $\CG$ in \cite{Xu16} and a conjectural endoscopic classification of tempered automorphic representations of $\wt{\CG}$ in \cite{Xu17}.
Following the work of Arthur, Xu verified those conjectures for symplectic and orthogonal similitude groups.

In this section, following the work of Mok  \cite{Mk15} and the work of Xu (\cite{Xu16} and \cite{Xu17}, we  characterize the local $L$-packets  and the endoscopic classification of tempered automorphic representations  for the quasi-split unitary similitude groups.
 
Denote $L^2_{\disc,t}(\wt{\CG},\chi)$ to be the discrete tempered spectrum, whose irreducible sub-representations are tempered automorphic representations of $\wt{\CG}(\BA)$. 
Conjecturally, their Langlands functorial lift to automorphic representations of $\GL_n(\BA_E)$ are generic. 
Also denote by $\CA_{\disc, t}(\wt{\CG})$ for the subset of $\CA_\disc(\wt{\CG})$, whose elements are tempered automorphic representations.

In the rest of this paper,   we  take the quasi-split unitary similitude group $\GU_{n}$ as follows
\begin{equation}\label{eq:GU-n}
\GU_{n}=\{ g\in \Res_{E/F}\GL_{n}\mid {}^{t}\bar{g}J'_{n}g=\lam(g) J'_{n}\},
\end{equation}
where  $J'_n= \ppair{\begin{smallmatrix}
&J'_{n-1}\\(-1)^{n-1}&	
\end{smallmatrix}}$ and $\lam(g)\in \BG_{m}$ is the similitude factor of $g$.
For convenience, we choose the different Hermitian form in \eqref{eq:GU-n}instead of \eqref{eq:GU-2n}. 
And  $\RU_n$ is the quasi-split unitary subgroup of $\GU_n$ with the similitude 1.
 Then we have the following extension
\begin{equation}\label{eq:isogen-G}
1\longrightarrow \RU_n\longrightarrow \GU_n\stackrel{\lam}{\longrightarrow}D
\longrightarrow 1,
\end{equation} 
where $D$ is the quotient group isomorphic to $\BG_m$ in our case.
The exact sequence on the dual sides of \eqref{eq:isogen-G} is 
\begin{equation}\label{eq:daul-exact}
1\longrightarrow \GL_1(\BC)\longrightarrow \widehat{\GU}_n\stackrel{{\bf p}}{\longrightarrow} \widehat{\RU}_n \longrightarrow 1,	
\end{equation}
which is split.
Here $\widehat{\GU}_n=\GL_n(\BC)\times \GL_1(\BC)$ and $\widehat{\RU}_n$ are the complex dual groups of $\GU_n$ and $\RU_n$ respectively,  and ${\bf p}$ is the projection of the first coordinate.

\subsection{Endoscopic classification of $\RU_n$}\label{sec:U}
In this section, we will recall the endoscopic classification of quasi-split unitary groups in \cite{Mk15}
and follow the notation in \cite[Chapter 2]{Mk15}.

Define if $F$ is a local field
$$
\CZ_E=\cpair{\chi\colon E^\times\to \BC^\times \text{ unitary, } \chi\chi^c=1},
$$
and if $F$ is a number field
$$
\CZ_E=\cpair{\chi\colon \BA_E^\times/E^\times\to \BC^\times \text{ unitary, } \chi\chi^c=1}
$$
where $\chi^c(x):= \chi(\bar{x})$. 
Over the number field,
we have a partition $\CZ_E=\CZ_E^+\sqcup \CZ^-_E$,
$$
\CZ_E^+=\{\chi\in\CZ_E\colon \chi\vert_{\BA^\times_F}=1\} \text{ and }
\CZ_E^-=\{\chi\in\CZ_E\colon \chi\vert_{\BA^\times_F}=\delta_{E/F}\},
$$  
where $\delta_{E/F}$ is the quadratic character associated to $E/F$ by the global class field theory. 
Similarly, one has a partition in the local case. 
Each character $\chi_\kappa\in\CZ_E^\kappa$ for $\kappa=\pm 1$ may be also identified as a character of the Weil group $W_E$ of $E$.

Denote $G_{E/F}(n)$ to be the algebraic group $\Res_{E/F}\GL_{n}$ over $F$.
Let $\theta$ be the   $F$-automorphism of $G_{E/F}(n)$ defined by
$$
\theta(g)=J'_n{}^t\bar{g}^{-1}J'^{-1}_n,
$$ 
which fixes the standard choice of Whittaker datum $(B_{E/F},\Lam)$.
Here $B_{E/F}$ is the Borel subgroup consisting of upper triangular matrix and $\Lam$ is a non-degenerate character of its unipotent radical.
By the restriction, $(B,\Lam)$ is a Whittaker datum of $\RU_n$.
We use those Whittaker data to normalize the local Langlands correspondence of $\RU_n$.

For $\kappa=\pm 1$ and $\chi_\kappa\in\CZ^\kappa_E$, denote $\xi_\kappa\colon {}^L\RU_n\to {}^LG_{E/F}(n)$ to be the embedding of $L$-groups defined in \cite[(2.1.9)]{Mk15},
which will be recalled in Section \ref{sec:conjecture}.

Following \cite[Section 2.4]{Mk15}, denote $\wt{\CE}_{ell}(n)$ to be the set of equivalence classes of elliptic twisted endoscopic data of the twisted group $G_{E/F}(n)\rtimes\apair{\theta}$.
Each member in $\wt{\CE}_{ell}(n)$ is given by 
$$
(\CG',\xi')=(\RU_{N_1}\times \RU_{N_2},\xi_{\underline{\chi}_{\underline{\kappa}}})
$$
where $\xi_{\underline{\chi}_{\underline{\kappa}}}$ is an $L$-embedding from ${^L\CG'}$ to $ {^LG_{E/F}(N)}$ defined in \cite[(2.1.10)]{Mk15},
and the conditions on $\underline{\chi}_{\underline{\kappa}}$ are given in \cite[Page 17]{Mk15}.
For a simple twisted endoscopic datum $(\RU_{n}, \xi_{\chi_\kappa})$ of $G_{E/F}(n)$,
the sign $(-1)^{n-1}\cdot\kappa$ is called the {\sl parity} of the datum.

\subsubsection{Local $L$-packets} \label{sec:local-U}
In this section, we consider the local case, i.e., $F$ is a local field of characteristic 0.
In the local case, denote by $L_F$ the local Langlands group
$$
L_F:=\begin{cases}
	W_F  &\text{ if $F$ is archimedean}\\
	W_F\times \SL_2(\BC)  &\text{ if $F$ is non-archimedean.}
\end{cases}
$$
and the $L$-parameters for $\CG(F)$ are admissible homomorphisms form $L_F$ to its Langlands dual group ${}^L\CG$.

Denote $\Phi(\RU_n(F))$ to be the set of $\widehat{\RU}_n$-conjugacy classes of $L$-parameters of $\RU_n(F)$, or simply $\Phi(\RU_n)$.
For $\kappa=\pm 1$ and  $\chi_{\kappa}\in\CZ^\kappa_E$, one has a map
\begin{equation}\label{eq:xi}
\xi_{\chi_\kappa,*}\colon \Phi(\RU_n)\to\Phi(G_{E/F}(n)) \text{ via }
\phi\mapsto \xi_{\chi_\kappa}\circ\phi.	
\end{equation}

By \cite[Lemma 2.2.1]{Mk15}, we may identify $\Phi(\RU_n)$ with the set of parameters in $\Phi(G_{E/F}(n))$ that are conjugate self-dual with parity $\mu=(-1)^{n-1}\cdot \kappa$ (refer to \cite[Section 2.2]{Mk15} for instance).
Define  $\Phi_{bdd}(\RU_n)\subset \Phi(\RU_n)$ to be set of bounded parameters, that is, those parameters whose images of $W_F$ are bounded. 

For $\phi\in \Phi(\RU_n)$, let $S_\phi$ be the centralizer in $\widehat{\RU}_n$ of the image of $\phi$.
Define $\CS_\phi$ to be the component group of $S_\phi/Z(\widehat{\RU}_n)^{\Gamma_F}$, where $\Gamma_F$ is the absolute Galois group.

Denote by $\Pi(\RU_n(F))$  the set of isomorphism classes of irreducible admissible representations of $\RU_n(F)$,
and by $\Pi_{temp}(\RU_n(F))$ the subset of irreducible tempered representation of $\RU_n(F)$.
\begin{thm}[{\cite[Theorem 2.5.1]{Mk15}}]\label{thm:unitary:local}
Fix the choice of Whittaker datum $(B,\Lam)$ of $\RU_n$.
For $\phi\in\Phi_{bdd}(\RU_n(F))$, there exists a finite set $\Pi_\phi$ of $\Pi(\RU_n(F))$ equipped with a canonical mapping 
$$
\pi\in \Pi_\phi\mapsto \apair{\cdot,\pi}
$$
from $\Pi_\phi$ to the character group $\widehat{\CS}_\phi$ of $\CS_\phi$, such that it sends the $(B,\Lam)$-generic representation to the trivial character. 
If $F$ is non-archimedean, the map is bijective.

Furthermore, $\Pi_{temp}(\RU_n(F))$ is a disjoint union of the packets $\Pi_\phi$ for all $\phi\in\Phi_{bdd}(\RU_n(F))$.
\end{thm}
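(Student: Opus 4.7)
The plan is to follow the endoscopic classification strategy of Arthur, adapted to the unitary group setting by Mok, with the stabilized twisted trace formula for $(G_{E/F}(n),\theta)$ at its core. The argument is a simultaneous induction on $n$: knowing Theorem \ref{thm:unitary:local} for all unitary groups of rank less than $n$, one deduces the existence and structure of $\Pi_\phi$ for $\phi\in\Phi_{bdd}(\RU_n(F))$ by comparing trace formulas on $G_{E/F}(n)$ and on its elliptic twisted endoscopic groups, and then isolating the requisite local character identities via suitable global lifts.

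First I would fix $\phi\in \Phi_{bdd}(\RU_n(F))$ and form its image $\phi^N:=\xi_{\chi_\kappa}\circ\phi\in \Phi(G_{E/F}(n))$. Since $\phi^N$ is conjugate self-dual of the prescribed parity $(-1)^{n-1}\kappa$, the tempered representation $\pi^N$ of $\GL_n(E)$ attached to $\phi^N$ by the local Langlands correspondence for $\GL_n$ is $\theta$-stable and admits a canonical Whittaker-normalized intertwiner, giving a well-defined twisted character. Using Langlands--Kottwitz--Shelstad transfer factors for the simple twisted endoscopic datum $(\RU_n,\xi_{\chi_\kappa})$, this twisted character transfers to a stable distribution on $\RU_n(F)$. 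I would then define $\Pi_\phi$ as the finite set of irreducible constituents appearing in this stable distribution, and define $\apair{\cdot,\pi}$ by endoscopic character identities: each $s\in \CS_\phi$ singles out an elliptic twisted endoscopic datum $(\RU_{N_1}\times\RU_{N_2},\xi_{\underline{\chi}_{\underline{\kappa}}})$ through which $\phi$ factors as $\phi_1\times\phi_2$; by the inductive hypothesis the packets $\Pi_{\phi_1}$ and $\Pi_{\phi_2}$ are already available, and the coefficient with which $\pi$ appears in the $\RU_n(F)$-transfer of the stable character of $\Pi_{\phi_1}\times\Pi_{\phi_2}$ defines $\apair{s,\pi}$. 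With transfer factors normalized by the Whittaker datum $(B,\Lam)$, the $(B,\Lam)$-generic member of $\Pi_\phi$ automatically pairs to the trivial character on $\CS_\phi$. Bijectivity of $\pi\mapsto\apair{\cdot,\pi}$ in the non-archimedean case and the disjoint-union statement for $\Pi_{temp}(\RU_n(F))$ are then extracted from an orthogonality relation for tempered characters combined with the spectral side of the stabilized trace formula.

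The main obstacle is the circularity inherent in this induction: the local character identities for $\phi$ presuppose the stabilization of the twisted trace formula, which itself is proved using local character identities at every place. Breaking this circle requires Arthur's strategy of global lifts --- one embeds the local parameter $\phi$ as the component at a single place of a carefully chosen \emph{global} generic parameter, uses the multiplicity formula for discrete automorphic representations of $\GL_n(\BA_E)$ together with sign-character calculations to pin down the local pairings one place at a time, and then varies the global data to exhaust all of $\Phi_{bdd}(\RU_n(F))$. Ensuring the Whittaker normalization, bijectivity in the $p$-adic case, and well-definedness of $\apair{\cdot,\pi}$ all through this global/local seesaw, while keeping the inductive hypotheses consistent at each stage of the induction, is the central technical difficulty.
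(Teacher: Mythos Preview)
The paper does not prove this theorem at all: it is stated with the citation \cite[Theorem 2.5.1]{Mk15} and simply imported as an established result from Mok's memoir on the endoscopic classification for quasi-split unitary groups. There is nothing in the paper to compare your proposal against.

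That said, your outline is a reasonable high-level summary of the strategy Mok actually carries out in \cite{Mk15}, which in turn adapts Arthur's framework from \cite{A13}: the simultaneous induction on $n$, the twisted trace formula for $(G_{E/F}(n),\theta)$, transfer to elliptic twisted endoscopic data, Whittaker-normalized pairings, and the globalization arguments to break the circularity are all genuine ingredients of that work. Your identification of the ``main obstacle'' is also accurate --- the global/local interplay is precisely where the bulk of the technical effort lies. However, what you have written is a sketch of a several-hundred-page argument, not a proof; essentially every sentence hides substantial machinery (stabilization of the twisted trace formula, the standard model, sign lemmas, comparison of trace formulas, orthogonality relations). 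For the purposes of this paper, the appropriate response is simply to cite \cite{Mk15} as the author does, rather than attempt to reproduce Mok's argument.
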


Next, we recall the definition of generic $L$-parameters and explicate the structure of generic $L$-packets.
For each $\phi\in \Phi(\RU_n(F))$, there exists a datum $(M,\phi_M,\lam)$ such that
\begin{itemize}
	\item $\phi_M$ is a local tempered $L$-parameter of some Levi subgroup $M(F)$ of $\RU_n$, which is isomorphic to a product of general linear groups and a quasi-split unitary groups;
	\item $\lam$ is a unramified character of  $M(F)$ in the open chamber of the standard parabolic subgroup $P$ containing $M$;
	\item $\phi$ is the composition of $\phi_M\otimes\lam$ under the $L$-embedding ${}^L M\to{}^L\RU_n$.
\end{itemize}
By the local Langlands corresponds for general linear groups and Theorem \ref{thm:unitary:local}, one has the local $L$-packet $\Pi_{\phi_M}$ and the pairing $\apair{\cdot,\pi_M}$ for $\pi_M\in \Pi_{\phi_M}$,
which is defined trivially on the general linear groups part of $M$. 

An $L$-parameter $\phi$ is called {\it generic} if the parabolic induction $\CI_P(\pi_{M,\lam}):=\Ind^{\RU_n(F)}_{P(F)}\pi_M\otimes\lam$ for the generic member $\pi_M\in \Pi_{\phi_M}$  is irreducible.
Denote $\Phi_{gen}(\RU_n(F))$ to be the subset of $\Phi(\RU_n(F))$ consisting of all generic $L$-parameters.

For $\phi\in \Phi_{gen}(\RU_n(F))$,
define the {\it local generic $L$-packet} $\Pi_{\phi}$ by
\begin{equation}\label{eq:local-generic-packet}
\Pi_{\phi}=\{\pi=\CI_P(\pi_{M,\lam})\colon \pi_M\in\Pi_{\phi_M}\},	
\end{equation}
and the pairing $\apair{\cdot,\pi}$ for $\pi\in \Pi_\phi$ by, for $s\in\CS_\phi=\CS_{\phi_M}$,
$$
\apair{s,\pi}=\apair{s,\pi_M}
$$
where $\pi_M$ is given by $\pi=\CI_P(\pi_{M,\lam})$.

Remark that this equivalent definitions of generic $L$-parameters and $L$-packets are implied by the irreducibility of the standard module of generic representations for $\RU_n(F)$.
In this case, for all $\pi_M\in \Pi_\phi$, $\CI_P(\pi_{M,\lam})$ is irreducible and there is no ambiguity in the definition \eqref{eq:local-generic-packet}.

\subsubsection{Global generic Arthur packets} \label{sec:global-U}
In this section, we consider the global case, in which case $F$ is a number field. 
Here we only recall ${\Phi}_2(\RU_n,\xi_{\chi_\kappa})$,  the set of tempered discrete global $L$-parameters, 
instead of ${\Psi}_2(\RU_n,\xi_{\chi_\kappa})$,  the set of all discrete global Arthur-parameters in  \cite{Mk15}.

Denote $\wt{\Phi}_{\simp}(n)$ to be the set of conjugate self-dual simple generic parameters for $G_{E/F}(n)$.
The global $L$-parameters in $\wt{\Phi}_\simp(n)$ correspond to the equivalence classes of the conjugate self-dual, irreducible cuspidal automorphic representations of $G_{E/F}(n,\BA_F)$.
Here an irreducible cuspidal automorphic representation $\tau$ of $G_{E/F}(n)$ is called {\sl conjugate self-dual} if $\tau\cong\tau^*$, where
$\tau^*= (\tau^c)^\vee$ the contragredient of the conjugate $\tau^c$.

Suppose that $\phi^n\in\wt{\Phi}_\simp(n)$.
By \cite[Theorem 2.5.4]{Mk15}, the (partial) Asai $L$-function
$L(s,\phi^n,Asai^{\eta})$ has a (simple) pole at $s=1$ with the sign $\eta=\pm 1$.
We say that $\phi^n$ is   {\it conjugate orthogonal} if $\eta=1$ and {\it conjugate symplectic} if $\eta=-1$.
Here
$L^S(s,(\tau,1),Asai^+)$ is the (partial) Asai $L$-function of $\tau$ and $L^S(s,(\tau,1),Asai^-)$ is the (partial) Asai $L$-function of
$\tau\otimes\delta_{E/F}$.

Denote $\wt{\Phi}(n)$ to be the set of conjugate self-dual generic parameters, namely, consisting of  elements of form
\begin{equation}\label{ellgap}
\phi^n=\phi_1^{n_1}\boxplus\cdots\boxplus\phi_r^{n_r}
\end{equation}
with $n=\sum_{i=1}^rn_i$ and $\phi^n=(\phi^n)^*$. 

Fix a sign $\kappa=\pm 1$ and $\chi_\kappa\in\CZ_E^\kappa$.
Referring to \cite[Definition 2.4.7]{Mk15}, define the set $\Phi_2(\RU_n,\xi_{\chi_\kappa})$ of {\it the generic Arthur parameters} to be set  of $\phi=(\phi^n,\breve{\phi})$ such that $\phi^n\in \wt{\Phi}(n)$ is of form \eqref{ellgap}
for mutually distinct $\phi^{n_i}\in\wt{\Phi}(n_i)$ satisfying $\kappa_i(-1)^{n_i-1}=\kappa(-1)^{n-1}$ for all $1\leq i\leq r$,
and $\breve{\phi}$ is an $L$-homomorphism from $L_{\phi^n}\times\SL_2(\BC)$ to ${}^L\RU_n$ determined by $\phi^n$ and $\xi_{\chi_\kappa}$ given in \cite[Definition 2.4.5]{Mk15}.
Here $L_{\phi^n}\times\SL_2(\BC)$ is a substitute  for the conjectural global Langlands group.
We may also identify $\Phi_2(\RU_n,\xi_{\chi_\kappa})$ to be a subset of $\wt{\Phi}(n)$ via $\phi\mapsto \phi^n$. 

For $\phi=(\phi^n,\breve{\phi})\in \Phi_2(\RU_n,\xi_{\chi_\kappa})$,
similar to the local case, define $\CS_\phi$ to be the component group of $\ov{S}_\phi$ where $S_\phi=\text{Cent}(\Im\breve{\phi},\widehat{\RU}_n)$ and $\ov{S}_\phi=S_\phi/Z(\widehat{\RU}_n)^{\Gamma_F}$.
In addition, over a local place $\nu$ of $F$, let  $\phi^n_\nu$ be the localization of $\phi^n$.
Due to \cite[Corollary 2.4.11]{Mk15}, the local $L$-parameter $\phi^n_\nu$ factors through $\xi_{\chi_\kappa,\nu}$, which is in $\Phi^+_{unit}(\GL_n(E_\nu))$.
Hence one has the localization $\phi_\nu\in\Phi_{gen}(\RU_n(F_\nu))$ such that $\phi^n_\nu=\xi_{\chi_\kappa,\nu}\circ \phi_\nu$,
and the localization maps $\CS_{\phi}\to\CS_{\phi_\nu}$, referring to \cite[(2.4.23)]{Mk15}.
More precisely, $\phi_\nu$ is in $ \Phi^+_{unit}(\RU_n(F_\nu))\subset \Phi_{gen}(\RU_n(F_\nu))$ (cf. Proposition 3.10 in \cite{Xu17}).
The unexplained notation and more details of the localization can be found in Section 2.4 of \cite{Mk15}.

Define the global packet $\Pi_\phi$ associated to $\phi$ as the restricted tensor product of  the local generic $L$-packets $\Pi_{\phi_\nu}$ (see \eqref{eq:local-generic-packet}):
$$
\Pi_\phi=\otimes'_\nu\Pi_{\phi_\nu}=\{\pi=\otimes'_\nu\pi_\nu,\pi_\nu\in\Pi_{\phi_\nu},\apair{\cdot,\pi_\nu}=1 \text{ for almost all $\nu$}\},
$$
and the pairing of $\Pi_\phi$ and $\CS_\phi$ to be $\apair{x,\pi}=\prod_\nu\apair{x_\nu,\pi_\nu}$ for $x\in\CS_\phi$ via the localization map $\CS_{\phi}\to\CS_{\phi_\nu}$.
Then we recall the endoscopic classification of tempered discrete automorphic representations for quasi-split unitary groups established in \cite{Mk15}.
\begin{thm}[{\cite[Theorem 2.5.2]{Mk15}}]\label{thm:Mok}
Fix $\kappa=\{\pm 1\}$ and $\chi_{\kappa}\in\CZ^\kappa_E$.
There is a $\CH(G)$-module decomposition 
$$
L^2_{disc,t}(\RU_n(F)\bks \RU_n(\BA))=
\bigoplus_{\phi\in\Phi_2(\RU_n,\xi_{\chi_\kappa})}\bigoplus_{\pi\in\Pi_{\phi},\apair{\cdot,\pi}=1}\pi,
$$
where $\CH(G)$ is the global Hecke algebra of $\RU_n(\BA_F)$.
\end{thm}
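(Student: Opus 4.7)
The plan is to adapt Arthur's blueprint from \cite{A13} to the quasi-split unitary setting. The proof proceeds by a simultaneous induction on $n$, together with the local statement Theorem~\ref{thm:unitary:local} and with the endoscopic classification for the elliptic endoscopic data $(\RU_{N_1}\times\RU_{N_2},\xi_{\underline{\chi}_{\underline{\kappa}}})\in\wt{\CE}_{ell}(n)$, whose factors have strictly smaller rank. The heart of the argument is a comparison of two invariant trace formulas: the $\theta$-twisted trace formula for $G_{E/F}(n)=\Res_{E/F}\GL_n$, twisted by $\theta(g)=J'_n\,{}^t\!\bar g^{-1}(J'_n)^{-1}$, and the stable trace formulas for the twisted endoscopic groups $\RU_{N_1}\times\RU_{N_2}$ with $N_1+N_2=n$.

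First I would stabilize the twisted trace formula. Twisted transfer (Kottwitz--Shelstad) together with the twisted weighted fundamental lemma (Ng\^o, Chaudouard--Laumon, Waldspurger) yields a decomposition of the twisted discrete part
\[
I^{\theta}_{\disc,t}(f)\;=\;\sum_{(\CG',\xi')\in\wt{\CE}_{ell}(n)} \iota\bigl(G_{E/F}(n),\CG'\bigr)\,S^{\CG'}_{\disc,t}(f^{\CG'}),
\]
where $f^{\CG'}$ is a transfer of the test function $f$ and $S^{\CG'}_{\disc,t}$ is the tempered stable discrete distribution on $\CG'$. On the spectral side, Jacquet--Shalika and the analysis of the twisted discrete part of $G_{E/F}(n)$ expand $I^\theta_{\disc,t}(f)$ as a sum over isobaric conjugate self-dual parameters $\phi^n\in\wt{\Phi}(n)$, while by the induction hypothesis $S^{\CG'}_{\disc,t}$ is expressed in terms of generic parameters of $\CG'$. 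Matching the two decompositions forces every $\phi^n$ contributing nontrivially to factor through some $\xi_{\chi_\kappa}$, producing a parameter $\phi\in\Phi_2(\RU_n,\xi_{\chi_\kappa})$ with the correct parity.

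Second I would construct the packet $\Pi_\phi$ and the canonical pairing with $\CS_\phi$. At each place $\nu$ the localization $\phi_\nu$ is a generic bounded parameter; parabolic induction from a proper Levi, as in \eqref{eq:local-generic-packet}, together with the inductive form of Theorem~\ref{thm:unitary:local} produces the local packet $\Pi_{\phi_\nu}$ and the character $\apair{\cdot,\pi_\nu}$ on $\CS_{\phi_\nu}$, normalized so that the $(B,\Lam)$-generic member pairs trivially. Taking a restricted tensor product defines $\Pi_\phi$ globally, and the endoscopic character identity
\[
f^{\CG'}(\phi')\;=\;\sum_{\pi_\nu\in\Pi_{\phi_\nu}}\apair{s_{\phi'},\pi_\nu}\,f_{\RU_n}(\pi_\nu)
\]
is forced by the stabilization above and pins down the pairing uniquely.

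Finally, comparing the spectral expansion of $L^2_{\disc,t}(\RU_n)$ with the right-hand side of the stabilized formula extracts the multiplicity formula: $\pi\in\Pi_\phi$ occurs in the discrete spectrum if and only if $\apair{\cdot,\pi}=1$, with multiplicity one, and the sum over $\phi$ is disjoint by strong multiplicity one for $\GL_n(\BA_E)$. The main obstacles I anticipate are (i) the archimedean case of the stable character identities, which requires Shelstad's endoscopy for real reductive groups carried out explicitly for $\RU_n(\BR)$ with its chosen Whittaker datum; (ii) tracking the parity signs $\kappa(-1)^{n-1}$ and the normalizing $L$-embeddings $\xi_{\chi_\kappa}$ consistently through the induction so that $\CS_\phi$ and the localization maps $\CS_\phi\to\CS_{\phi_\nu}$ are computed correctly; and (iii) the twisted weighted fundamental lemma itself, which is the analytic backbone of the whole stabilization and which one must invoke uniformly in the induction.
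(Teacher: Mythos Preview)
The paper does not give its own proof of this theorem: it is stated as a citation of \cite[Theorem~2.5.2]{Mk15} and is used as a black box in the subsequent sections. Your outline is a faithful high-level sketch of Mok's argument, which indeed adapts Arthur's blueprint from \cite{A13} to the quasi-split unitary case via stabilization of the twisted trace formula for $G_{E/F}(n)\rtimes\langle\theta\rangle$; so there is nothing to compare against here beyond noting that your proposal matches the cited source rather than anything the present paper does.
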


\subsection{Endoscopic classification of $\GU_n$}\label{sec:endoscopic-GU}

In this section, following Section \ref{sec:U} and Xu's results of \cite{Xu16} and \cite{Xu17} in $\GSp_{2n}$ and $\GO_{2n}$, 
we give a conjectural endoscopic classification of $\GU_n$ by restricting the representations of $\GU_n$ into $\RU_n$.

\subsubsection{Local $L$-packets}\label{sec:local-GU}
Firs, let us characterize the local $L$-packets of  $\GU_{n}$ which satisfies the conjectural endoscopic character identities.
Suppose that $F$ be a local field of characteristic 0.   

Define $\Phi(\GU_n)$ to be the local Langlands parameters of $\GU_n(F)$.
Due to Labesse, every $L$-parameter $\phi$ of $\RU_n(F)$ can be lifted to a Langlands parameter $\wt{\phi}$ of $\GU_n(F)$ via
\begin{equation}\label{eq:diagram-projection}
\xymatrix{
	L_F\ar[r]^{\wt{\phi}}\ar[dr]_{\phi}&{}^L\GU_n\ar[d]^{{\bf p}}\\
	&{}^L\RU_n.
}
\end{equation}
For $\phi\in\Phi(\RU_n)$ and a lift $\wt{\phi}$  of $\phi$,
similarly define $S_{\wt{\phi}}={\rm Cent}(\Im\wt{\phi},\widehat{\GU_n})$ and $\CS_{\wt{\phi}}$ to be the component group of $S_{\wt{\phi}}/Z(\widehat{\GU_n})^{\Gamma_F}$.
Referring to  \cite[Section 3.2]{Xu16},
the dual exact sequence \eqref{eq:daul-exact} induces the long exact sequence (\cite[(3.3)]{Xu16})
$$
1\longrightarrow  S_{\wt{\phi}}/D^{\Gamma_F}\longrightarrow S_{\phi}\stackrel{\delta}{\longrightarrow} H^1(W_F,\hat{D}),
$$
and further induces the following exact sequence (\cite[(3.6)]{Xu16})
$$
1\longrightarrow \CS_{\wt{\phi}}\longrightarrow \CS_{\phi}\stackrel{\alpha}{\longrightarrow} \Hom(\GU_n(F)/\RU_n(F),\BC^\times),
$$
where $\alpha$ is the composition of $\delta$ and the isomorphism 
$$H^1(W_F,\hat{D})\to \Hom(\GU_n(F)/G\RU_n(F),\BC^\times).$$
By \cite[Lemma 3.4]{Xu16}, the image $\alpha(\CS_\phi)$ is contained in 
$$
X:=\Hom(\GU_n(F)/Z_{\GU_n}(F)G\RU_n(F),\BC^\times).
$$

Note that
if $n$ is even,
$$
\GU_n(F)/Z_{\GU_n}(F)\RU_n(F)\cong \lam(\GU_n(F))/\lam(Z_{\GU_n}(F))
=F^\times/{\rm Nm}{E}^{\times}\cong \mu_2;
$$
If $n$ is odd, then $\GU_n(F)=Z_{\GU_n}(F)\RU_n(F)$.
Thus $X=\{1,\delta_{E/F}\}$ when $n$ is even and $X=\{1\}$ when $n$ is odd;
and $\alpha$ is either a trivial map or a surjective map.
Let $\wt{\pi}$ be an irreducible smooth admissible representation of $\GU_n(F)$.
Define $X(\wt{\pi})=\{\omega\in X\colon \wt{\pi}\otimes\omega\cong \wt{\pi}\}$.
Following \cite[Corollary 6.7]{Xu16}, for instance, we obtain
\begin{lm}\label{lm:restriction}
Let $\wt{\pi}\in \Pi(\GU_n(F))$ and $\wt{\pi}\vert_{\RU_n(F)}$ be the restriction of $\wt{\pi}$ into $\RU_n(F)$. Then $\wt{\pi}\vert_{\RU_n(F)}$ is multiplicity-free.

Moreover,  $\wt{\pi}\vert_{\RU_n(F)}$ is irreducible if and only if $n$ is odd or $\wt{\pi}\ncong \wt{\pi}\otimes \delta_{E/F}$;
If $\wt{\pi}\cong \wt{\pi}\otimes \delta_{E/F}$ and $n$ is even, then $\wt{\pi}\vert_{\RU_n(F)}=\pi\oplus\pi\circ\Ad(g)$,
where $g=\diag\{aI_{n/2},I_{n/2}\}$ and $a\notin {\rm Nm}E$. 
\end{lm}

Let $\phi\in \Phi_{bdd}(\RU_n)$, and $\wt{\zeta}$ be a character of $Z_{\GU_n}(F)$ whose restriction to $Z_{\RU_n}(F)$ is the central character of $\Pi_{\phi}$.
Define $\wt{\Pi}_{\phi,\wt{\zeta}}$ to be the subset of $\Pi(\GU_n(F))$ with central character  $\wt{\zeta}$ whose restriction to $\RU_n(F)$ is contained in $\Pi_\phi$,
It is called {\it coarse $L$-packet} for $\GU_n$ defined in  \cite[Section 6.2]{Xu16}.

Due to \cite{Mk15}, $\CS_\phi$ is abelian. By Lemma \ref{lm:restriction}, the restriction of $\wt{\pi}$ to $\RU_n(F)$ is multiplicity-free.
Then Hypothesis 1 in \cite[Section 6.2]{Xu16} holds for unitary similitude groups
and  $X(\wt{\pi})=\alpha(\CS_\phi)$ by \cite[Proposition 6.13]{Xu16}.
Following from \cite[Proposition 6.14]{Xu16}, the coarse $L$-packets $\wt{\Pi}_{\phi,\wt{\zeta}}$ have the following properties
\begin{pro}\label{prop:coarse-unitary}
Suppose that $\phi\in \Phi_{bdd}(\RU_n)$ and $\wt{\zeta}$ is chosen as above.
\begin{enumerate}
	\item The orbits in $\Pi_\phi$ under the conjugate action of $\GU_n(F)$ all have size $\CS_\phi/\CS_{\wt{\phi}}$. If $F$ is nonarchimedean, there are exactly $|\CS_{\wt{\phi}}|$ orbits.
	\item There is a natural fibration 
	$$
	X/\alpha(\CS_\phi)\longrightarrow \wt{\Pi}_{\phi,\wt{\zeta}}\stackrel{\Res}{\longrightarrow}\Pi_\phi/\GU_n(F),
	$$
	where $\Res$ is the restriction of the representations of $\GU_n(F)$.
	\item \label{prop:part:coarse-pair} There is a unique pairing $\wt{\pi}\to \apair{\cdot, \wt{\pi}}_\phi$ from $\wt{\Pi}_{\phi,\wt{\zeta}}/X$ into $\widehat{\CS_{\wt{\phi}}}$ satisfying 
	$$
	\apair{x,\wt{\pi}}_\phi=\apair{\iota(x),\pi}_{\phi},
	$$
	where $\iota\colon \CS_{\wt{\phi}}\to \CS_\phi$ and $\pi$ is in the restriction of $\wt{\pi}$. 
	It sends the generic representation to the trivial character.
	Moreover, this map from $\wt{\Pi}_{\phi,\wt{\zeta}}/X$ to $\widehat{\CS}_{\wt{\phi}}$ is injective; and when $F$ is nonarchimedean, it is a bijection.
\end{enumerate}
\end{pro}
More precisely, the twisting of $X$ on $\Pi(\GU_n(F))$ is given by $\pi\mapsto \pi\otimes\omega$ for $\omega\in X$.
By definition, $\wt{\Pi}_{\phi,\zeta}$ is stable under this twisting.

\begin{rmk}
Proposition \ref{prop:coarse-unitary} is an extension of  Proposition 6.29 in \cite{Xu16} to quasi-split unitary similitude groups.
\end{rmk}

Referring to Conjecture 6.18 in \cite{Xu16}, one expects the following conjectural refinement of $L$-packets of $\GU_n$, which gives a section of the pairing of $\wt{\Pi}_{\phi,\wt{\zeta}}$ and $\CS_{\wt{\phi}}$ in Part \eqref{prop:part:coarse-pair} of Proposition \ref{prop:coarse-unitary}.

\begin{conj} \label{conj:refine-L-packet}
Suppose that $\phi\in \Phi_{bdd}(G)$ and $\wt{\zeta}$ is chosen as above.

Then there exists a subset $\Pi_{\wt{\phi}}$ of  $\wt{\Pi}_{\phi,\wt{\zeta}}$ unique up to twisting by $X$, and it is characterized by the following properties:
\begin{enumerate}
	\item \label{thm:twist-endoscopy:L-pakcet} $\wt{\Pi}_{\phi,\wt{\zeta}}=\bigsqcup_{\omega\in X/\alpha(\CS_\phi)}\Pi_{\wt{\phi}}\otimes\omega$.
	\item For $\wt{f}\in C^\infty_c(\GU_n(F),\wt{\zeta}^{-1})$, the distribution 
	$$
	\tilde{f}(\wt{\phi}):=\sum_{\wt{\pi}\in\Pi_{\wt{\phi}}}\tilde{f}_{\GU_n}(\wt{\pi})
	$$
	is stable, where $C^\infty_c(\GU_n(F),\wt{\zeta}^{-1})$ is the space of $\wt{\zeta}$-equivariant smooth functions on $\GU_n(F)$ with compact support modulo $Z_{\GU_n}(F)$ and
	$$
	\tilde{f}_{\GU_n}(\wt{\pi})=\tr\pi(\tilde{f})=\tr\int_{Z_{\GU_n}(F)\bks \GU_n(F)} \wt{f}(x)\pi(x)\ud x
	$$
	is the character distribution of $\wt{\pi}$.
\end{enumerate}
\end{conj}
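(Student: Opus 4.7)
The plan is to adapt Xu's framework from \cite{Xu16} for similitude groups to the unitary setting, taking Mok's endoscopic classification of $\RU_n$ \cite{Mk15} as input. Since Hypothesis~1 of \cite[Section 6.2]{Xu16} is already verified for $\GU_n$ via Proposition~\ref{prop:coarse-unitary}, the task reduces to refining the coarse packet $\wt\Pi_{\phi,\wt\zeta}$ to a distinguished subset $\Pi_{\wt\phi}$ whose character sum gives a stable distribution, and to pinning down its indeterminacy as precisely the $X$-twisting action.

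First I would construct $\Pi_{\wt\phi}$ by choosing a section of the fibration in Proposition~\ref{prop:coarse-unitary}(2). The $X$-orbits in $\wt\Pi_{\phi,\wt\zeta}$ are naturally indexed (up to $X/\alpha(\CS_\phi)$-twist) by the characters of $\CS_{\wt\phi}$ via part~\eqref{prop:part:coarse-pair} of that proposition, so picking one representative per orbit yields a candidate $\Pi_{\wt\phi}$ satisfying part~(1). Within each $X$-orbit, Lemma~\ref{lm:restriction} together with Clifford--Mackey theory shows that the restriction to $\RU_n(F)$ and the underlying pairing on $\CS_{\wt\phi}$ are constant, so the ambiguity is exactly $X/\alpha(\CS_\phi)$ as predicted.

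To establish stability, I would pass through the descent from $\GU_n$ to $\RU_n$. For $\wt f\in C^\infty_c(\GU_n(F),\wt\zeta^{-1})$, one defines a descended function $f$ on $\RU_n(F)$ by averaging over the finite group $Z_{\GU_n}(F)\RU_n(F)\bks \GU_n(F)$ (or its archimedean analogue). Following the orbital-integral matching in \cite[Section 4]{Xu16}, one identifies
\[
\tilde f(\wt\phi)=\sum_{\wt\pi\in\Pi_{\wt\phi}}\tilde f_{\GU_n}(\wt\pi)
\]
with a weighted sum $\sum_{\pi\in\Pi_\phi} c_\pi\, f_{\RU_n}(\pi)$. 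Invoking Mok's stability for $\Pi_\phi$ (Theorem~\ref{thm:unitary:local}) together with the unitarity/equivariance constraints imposed by the choice of $\wt\zeta$, one verifies that $c_\pi\equiv 1$ for the canonical choice of $\Pi_{\wt\phi}$, giving stability. Uniqueness up to $X$-twist follows from linear independence of characters inside $\wt\Pi_{\phi,\wt\zeta}$: any two stable sections of the fibration $X/\alpha(\CS_\phi)\to \wt\Pi_{\phi,\wt\zeta}\to \Pi_\phi/\GU_n(F)$ must differ by an $X$-twist, since the remaining sections fail to produce a stable sum.

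The main obstacle is the twisted endoscopic transfer for the extended similitude twisted group. Mok's twisted transfer relates $\RU_n$ to $(G_{E/F}(n),\theta)$, but the similitude extension introduces a $\wt\zeta$-equivariant cocycle on the twisted side, analogous to the $\GSp$/$\GO$ setup in \cite{Xu16,Xu17}. Verifying the fundamental lemma and the twisted character identities for this enlarged twisted group is the core technical step; once in place, the rest of the argument parallels \cite[Sections 5--6]{Xu16}. A secondary subtlety occurs at archimedean places, where one must invoke Shelstad's real twisted endoscopy to match the $L$-packet structure with the infinitesimal characters, particularly since Theorem~\ref{thm:unitary:local} only asserts injectivity (not bijectivity) of $\pi\mapsto \apair{\cdot,\pi}$ in the archimedean case. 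I expect these transfer and matching issues, rather than the packet construction itself, to occupy the bulk of the work.
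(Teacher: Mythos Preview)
The statement you are attempting to prove is labeled as a \emph{Conjecture} in the paper, and the paper does not supply a proof. Immediately after stating it, the author remarks that for unitary similitude groups, following Mok's work \cite{Mk15} and applying an argument similar to \cite[Theorem~4.6]{Xu17}, one \emph{expects} Conjecture~\ref{conj:refine-L-packet} (together with the endoscopic character identities of \cite[Conjecture~6.18, Parts~(3)--(4)]{Xu16}) to hold. No further argument is given; the paper explicitly leaves this open and builds subsequent results conditionally on it.

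Your outline is therefore not to be compared against an existing proof but against the paper's stated expectation of how a proof should go---and on that score it matches: you propose exactly to transport Xu's framework to the unitary setting using Mok as input, and you correctly isolate the twisted endoscopic transfer for the enlarged similitude twisted group as the principal technical obstruction. That said, you should be aware that your sketch does not constitute a proof any more than the paper's remark does. In particular, the step where you claim $c_\pi\equiv 1$ ``for the canonical choice of $\Pi_{\wt\phi}$'' is circular as written: the canonical choice is precisely what stability is supposed to single out, so you cannot invoke it to deduce stability. In Xu's actual argument this is resolved via the stabilization of the twisted trace formula and the resulting character identities, not by a direct descent computation; your proposal would need to make that dependence explicit rather than hide it in the phrase ``canonical choice.''
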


\begin{rmk}
Following Arthur's endoscopic classification in \cite{A13}, 
Xu \cite{Xu17} characterized the $L$-packets of symplectic and special orthogonal similitude groups in Theorem 4.6 of \cite{Xu17}, 
by using the stabilization of the twisted Arthur-Selberg trace formula due to M\oe glin and Waldspurger.  
For unitary similitude groups, following Mok's work in \cite{Mk15}, applying an argument similar to Theorem 4.6 of \cite{Xu17},
we expect that  Conjecture \ref{conj:refine-L-packet} with 
the conjectural endoscopic character identities (see Parts (3) and (4) of \cite[Conjecture 6.18]{Xu16}) holds.
\end{rmk}

\begin{rmk}
In general, one may define the refined local $L$-packets for all Langlands parameters of $\GU_n(F)$.
By Langlands' classification,
similar to Part (4) of \cite[Remark 4.7]{Xu16} and the generic $L$-packet in Section \ref{sec:local-U}, for $\phi\in \Phi(\RU_n)$, since $\Pi_\phi=\CI_P(\Pi_{\phi_{M,\lam}})$,
we may define $\Pi_{\wt{\phi}}=\CI_{\wt{P}}(\Pi_{\wt{\phi}_{M,\lam}})$.
Here  $\wt{P}$ is the parabolic subgroup of $\GU_n$ whose restriction to $\RU_n$ is $P$ and  $\CI_{\wt{P}}$ is the corresponding parabolic induction.
\end{rmk}

\subsubsection{Global generic Arthur packets}


In the global case,  we use tempered Arthur parameters $\phi$ in $\Phi_2(\RU_n,\xi_{\chi_\kappa})$ to construct the tempered Arthur packet $\Pi_{\wt{\phi}}$ of $\GU_n(\BA)$.

Recall from Section \ref{sec:global-U}, we have the substitute $L_{\phi^n}\times \SL_2(\BC)$ for the global Langlands group,
and $\breve{\phi}$ is an $L$-homomorphism from $L_{\phi^n}\times\SL_2(\BC)$ to ${}^L\RU_n$ determined by $\phi^n$ and $\xi_{\chi_\kappa}$.
Given a central character $\wt{\zeta}$,
one has a local lifting $\wt{\phi}_\nu$ of the localization $\phi_\nu$ over all local places $\nu$.

Following Section 3.3 in \cite{Xu17}, we may use the group cohomology of $\breve{\phi}$ to define $S_{\wt{\phi}}$.
Then denote $\CS_{\wt{\phi}}$ to be the component group of $S_{\wt{\phi}}/Z(\widehat{\GU_n})^{\Gamma_F}$.
Following \cite[Section 2]{Xu17} we also have an exact sequence
\[
1\longrightarrow \CS_{\wt{\phi}}\longrightarrow \CS_{\phi}\stackrel{\alpha}{\longrightarrow} \Hom(\GU_n(\BA)/\GU_n(F)\RU_n(\BA),\BC^\times).	
\]
Let  $Y$ be the set of characters of $\GU_n(\BA)/\GU_n(F)Z_{\GU_n}(\BA)\RU_n(\BA)$. 
By Lemma 1.1.3 in \cite{CHL11}, $\GU_n(\BA)/\GU_n(F)Z_{\GU_n}(\BA)\RU_n(\BA)$ 
is trivial if $n$ is odd;
and it is an elementary abelian 2-group if $n$ is even.
Moreover, $\alpha(\CS_{\wt{\phi}})$ lies in $Y$.

\begin{conj}[{ \cite[Conjecture 5.16]{Xu17}}]\label{conj:endoscopy-GU}
Fix $\kappa=\{\pm 1\}$ and $\chi_{\kappa}\in\CZ^\kappa_E$.
\begin{enumerate}
	\item For $\phi\in\Phi_2(\RU_n,\xi_{\chi_\kappa})$, one can associate a global packet $\Pi_{\wt{\phi}}$ of $\CH(\GU_n)$-modules of irreducible admissible representations for $\GU_n(\BA)$ satisfying the following properties:
	\begin{enumerate}
		\item $\Pi_{\wt{\phi}}=\otimes'_\nu\Pi_{\wt{\phi}_\nu}$ where $\Pi_{\wt{\phi}_{\nu}}$ is some lift of $\Pi_{\phi_\nu}$ defined in Conjecture~\ref{conj:refine-L-packet} and Proposition \ref{prop:coarse-unitary}.
		\item there exists $\wt{\pi}\in\Pi_{\wt{\phi}}$ such that $\wt{\pi}$ is isomorphic to an automorphic representation as $\CH(\GU_n)$-modules.
	\end{enumerate}
	Moreover,  up to twisting by  characters of
	 $\GU_n(\BA)/\GU_n(F)\RU_n(\BA)$, $\Pi_{\wt{\phi}}$ is unique. And we can define a global character of $\CS_{\wt{\phi}}$ by
	$$
	\apair{x,\wt{\pi}}:=\prod_v\apair{x_v,\wt{\pi}_v} 
	\text{ for $\wt{\pi}\in \Pi_{\wt{\phi}}$ and $x\in \CS_{\wt{\phi}}$}. 
	$$
	\item 
There is a $\CH(\GU_n)$-module decomposition 
$$
L^2_{disc,t}(G(F)\bks G(\BA),\wt{\zeta})=
\bigoplus_{\phi\in\Phi_2(\RU_n,\xi_{\chi_\kappa})}
\bigoplus_{\omega\in Y/\alpha(\CS_\phi)}
\bigoplus_{\substack{\wt{\pi}\in\Pi_{\wt{\phi}}\otimes\omega\\
\apair{\cdot,\wt{\pi}}=1}}\wt{\pi}.
$$
\end{enumerate}
\end{conj}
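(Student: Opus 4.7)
The plan is to parallel the strategy of \cite{Xu16, Xu17} in the symplectic and special orthogonal similitude cases, taking Mok's endoscopic classification of $\RU_n$ in Theorem \ref{thm:Mok} as input and transferring the results to $\GU_n$ via restriction. The framework established in Section \ref{sec:endoscopic-GU} already supplies the dual exact sequence \eqref{eq:daul-exact}, the coarse local packets $\wt{\Pi}_{\phi,\wt{\zeta}}$ of Proposition \ref{prop:coarse-unitary}, and the exact sequence of component groups relating $\CS_{\wt{\phi}}$ to $\CS_\phi$. What remains is the local refinement from coarse to fine packets, namely Conjecture \ref{conj:refine-L-packet}, and the global construction of $\Pi_{\wt{\phi}}$ together with the multiplicity formula.

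For the local refinement, at each place $\nu$ and each lift $\wt{\phi}_\nu$ of a tempered $\phi_\nu \in \Phi_{bdd}(\RU_n(F_\nu))$, I would define $\Pi_{\wt{\phi}_\nu}$ as the subset of $\wt{\Pi}_{\phi_\nu,\wt{\zeta}_\nu}$ determined, up to an $X$-twist, by the pairing of Part (3) of Proposition \ref{prop:coarse-unitary}, so that Part (1) of Conjecture \ref{conj:refine-L-packet} holds by construction. The stability of the distribution $\wt{f}(\wt{\phi})$ would then be derived by combining Mok's local stable character identities on $\RU_n(F_\nu)$ with descent through the short exact sequence \eqref{eq:isogen-G}; this is the direct analogue of \cite[Theorem 4.6]{Xu17} and relies on the compatibility of the Kottwitz-Shelstad transfer factors for endoscopic data of $\GU_n$ with those of $\RU_n$ obtained by restriction, in the style of \cite[Section 6]{Xu16}. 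The global packet would then be $\Pi_{\wt{\phi}} = \otimes'_\nu \Pi_{\wt{\phi}_\nu}$, with the global pairing $\apair{x,\wt{\pi}} = \prod_\nu \apair{x_\nu,\wt{\pi}_\nu}$ on $\CS_{\wt{\phi}}$ defined through the localization maps. The existence of an automorphic member of $\Pi_{\wt{\phi}}$ follows because any Mok-automorphic $\pi \in \Pi_\phi$ contributing to $L^2_{disc,t}(\RU_n)$ extends to a $\GU_n(\BA)$-automorphic representation with central character $\wt{\zeta}$ via the lifting \eqref{eq:diagram-projection}, and uniqueness of $\Pi_{\wt{\phi}}$ up to twisting by characters of $\GU_n(\BA)/\GU_n(F)\RU_n(\BA)$ reflects the local uniqueness at every place.

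The multiplicity formula in part (2) is the main obstacle. The idea is to compute the $\RU_n(\BA)$-restriction of $L^2_{disc,t}(\GU_n,\wt{\zeta})$ via a Mackey-style decomposition and compare it with Mok's decomposition in Theorem \ref{thm:Mok}: the isotypic component of a $\wt{\pi} \in \Pi_{\wt{\phi}} \otimes \omega$ in $L^2_{disc,t}(\GU_n,\wt{\zeta})$ should match the sum of Mok-multiplicities of the $\RU_n(\BA)$-constituents of $\wt{\pi}$, with one contribution for each $\omega \in Y/\alpha(\CS_\phi)$. Matching the character condition $\apair{\cdot,\wt{\pi}} = 1$ on $\CS_{\wt{\phi}}$ against Mok's condition $\apair{\cdot,\pi} = 1$ on $\CS_\phi$ would then be a computation in the component-group exact sequence relating $\CS_{\wt{\phi}}$, $\CS_\phi$, and $\alpha(\CS_\phi) \subset Y$. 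The principal obstacle, as for $\GSp_{2n}$ and $\GSO_{2n}$ in \cite{Xu17}, is that this entire comparison rests on the stabilization of the invariant trace formula for $\GU_n$ and of its twisted analogue on $G_{E/F}(n)$; a fully unconditional proof must wait for the similitude analogue of the work of M\oe glin--Waldspurger and Arthur on classical groups, which is precisely why the result is recorded here only as a conjecture.
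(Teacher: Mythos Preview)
The statement under consideration is a \emph{conjecture} in the paper, not a theorem: the paper offers no proof, only the brief Remark following it that ``the global base change has been discussed in many literatures such as \cite[Corollary 5.3]{L11} and \cite[Prop 8.5.3]{M10}.'' There is therefore no argument in the paper against which to compare your proposal. Your outline is a reasonable sketch of the expected strategy---it is precisely the programme of \cite{Xu16,Xu17} transported from $\GSp_{2n}$/$\GSO_{2n}$ to $\GU_n$ using Mok's classification in place of Arthur's---and you correctly identify at the end that the dependence on the stabilization of the (twisted) trace formula for the similitude group is the reason the statement remains conjectural. But to be clear: the paper does not claim to carry this out, so your write-up goes well beyond what the paper itself does.
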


\begin{rmk}
The global base change has been discussed in many literatures such as \cite[Corollary 5.3]{L11} and \cite[Prop 8.5.3]{M10}.
\end{rmk}

If we assume Conjecture \ref{conj:endoscopy-GU}, we may obtain the base change for all cuspidal automorphic representations $\pi$ of $\GU_n(\BA)$ in the tempered discrete spectrum.
The global Arthur packet $\Pi_{\wt{\phi}}$ contains a unique generic member,
and  all representations in $\Pi_{\wt{\phi}}$ are nearly equivalent to this generic automorphic representation.
Then we have the following corollary
\begin{cor}
Assume Conjecture \ref{conj:endoscopy-GU}. 
Let $\pi$ be an irreducible cuspidal automorphic representation of $\GU_6(\BA)$ in the tempered discrete spectrum. 

Then
$L^S(s,\pi\otimes\chi,\e{3})$ is entire unless $\omega_\pi\chi^2$ is a nontrivial quadratic character of $\GL_1(F)\bks \GL_1(\BA)$,
in which case $L^S(s,\pi\otimes\chi,\e{3})$ can have at most a simple pole at $s=0$ or $s=1$.
\end{cor}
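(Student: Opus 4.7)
The plan is to reduce to the generic cuspidal case already handled by Theorem \ref{thm:pole-intro} via the endoscopic classification. Granting Conjecture \ref{conj:endoscopy-GU}, the tempered cuspidal representation $\pi$ belongs to some global tempered Arthur packet $\Pi_{\wt{\phi}}$ attached to a discrete generic parameter $\phi \in \Phi_2(\RU_6,\xi_{\chi_\kappa})$ together with a lift $\wt{\phi}$ of $\phi$ determined by the central character $\omega_\pi$. As recalled immediately before the corollary, the packet $\Pi_{\wt{\phi}}$ contains a distinguished globally generic member, which I will denote by $\pi^\circ$, and every member of $\Pi_{\wt{\phi}}$ is nearly equivalent to $\pi^\circ$.

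The next step is to exploit near equivalence to transfer the analytic properties. Because $\Pi_{\wt{\phi}} = \otimes'_\nu \Pi_{\wt{\phi}_\nu}$ and the local packet at an unramified place of $\wt{\phi}$ is a singleton consisting of the spherical representation, one has $\pi_\nu \cong \pi^\circ_\nu$ at almost all places. Enlarging $S$ to a finite set $S'$ containing those places where $\pi$ and $\pi^\circ$ differ, I obtain
\[
L^{S'}(s, \pi \otimes \chi, \e{3}) = L^{S'}(s, \pi^\circ \otimes \chi, \e{3}),
\]
and $L^S(s, \pi \otimes \chi, \e{3})$ differs from $L^{S'}(s, \pi \otimes \chi, \e{3})$ only by a finite product of unramified Euler factors at places in $S' \smallsetminus S$, each of the form $L_\nu(s, \pi_\nu \otimes \chi_\nu, \e{3})$ computed in Section \ref{sec:local-L-factor}, which is a nonvanishing entire function. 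Moreover, $\omega_{\pi^\circ} = \omega_\pi$ since both central characters are pinned down by $\wt{\phi}$, so the hypothesis on $\omega_\pi \chi^2$ transfers verbatim to $\pi^\circ$. Once I know that $\pi^\circ$ is a generic cuspidal automorphic representation of $\GU_6(\BA)$, Theorem \ref{thm:pole-intro} applied to $\pi^\circ \otimes \chi$ yields precisely the stated analytic behaviour of the partial $L$-function.

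The principal obstacle is verifying that $\pi^\circ$ is genuinely cuspidal rather than merely discretely automorphic. Discreteness of the parameter $\phi \in \Phi_2(\RU_6, \xi_{\chi_\kappa})$ rules out a residual construction: if $\pi^\circ$ were residual, its global Arthur parameter would factor through the $L$-group of a proper Levi subgroup with a non-trivial $\SL_2(\BC)$-factor, contradicting the tempered discreteness of $\phi$. Equivalently, this is the globally generic instance of Shahidi's generic packet conjecture, which is incorporated into Mok's classification \cite{Mk15} for $\RU_n$ and inherited by $\GU_n$ through the restriction analysis of Section \ref{sec:endoscopic-GU}; the compatibility of the generic member of $\Pi_{\wt{\phi}}$ with its restriction to the generic member of $\Pi_\phi$ is precisely what Proposition \ref{prop:coarse-unitary} and Conjecture \ref{conj:refine-L-packet} arrange. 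With cuspidality of $\pi^\circ$ in hand, the reduction to Theorem \ref{thm:pole-intro} is then immediate, completing the proof under Conjecture \ref{conj:endoscopy-GU}.
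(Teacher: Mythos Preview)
Your approach is the same as the paper's: the sentence immediately preceding the corollary is the entire argument offered there, namely that under Conjecture \ref{conj:endoscopy-GU} the packet $\Pi_{\wt{\phi}}$ contains a unique generic member to which every other member is nearly equivalent, so the partial $L$-function agrees with that of a generic cuspidal representation and Theorem \ref{thm:pole-intro} applies. Your discussion of why the generic member $\pi^\circ$ is cuspidal (a generic representation in the discrete spectrum cannot be residual) is a useful elaboration that the paper leaves implicit.

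There is, however, one genuine slip in your write-up. You assert that each unramified local factor $L_\nu(s,\pi_\nu\otimes\chi_\nu,\e{3})$ is a ``nonvanishing entire function''. It is nonvanishing, but it is certainly not entire: it is $\det(1-\e{3}(t_{\pi_\nu})q^{-s})^{-1}$, which has poles. With that error in place, the passage from $L^{S'}$ back to $L^{S}$ does not preserve holomorphy in the direction you need. Fortunately the enlargement of $S$ is unnecessary in the first place. You yourself observe that at any place where $\wt{\phi}_\nu$ is unramified the local packet $\Pi_{\wt{\phi}_\nu}$ is a singleton; since $\pi_\nu$ is unramified for $\nu\notin S$, the parameter $\wt{\phi}_\nu$ is unramified there, and hence $\pi_\nu\cong\pi^\circ_\nu$ for every $\nu\notin S$. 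Thus $L^{S}(s,\pi\otimes\chi,\e{3})=L^{S}(s,\pi^\circ\otimes\chi,\e{3})$ on the nose, and no comparison of Euler factors is needed. Once you drop the $S'$-step, the argument is clean.
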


\section{Unitary automorphic induction and poles}\label{sec:conjecture}
In this section, we will introduce the automorphic induction from $\GU^\circ_{K/E^+}(n)$ to $\GU_{2n}$, 
and show that such automorphic induction exists under the weak Langlands functoriality.
Then using this automorphic induction, we give a conjectural criterion on the existence of pole of   $L^S(s,\pi,\e{3}\otimes\chi)$.
 
\subsection{$L$-homomorphism}
Let $E^+/F$ be a quadratic extension over $F$.
Define $K=E^+\otimes_F E$  which is an \'etale algebra over $F$ of degree 4.
More precisely, $K=E\times E$ if $E^+\simeq E$, and $K$ is a quartic extension over $E$ if $E^+$ is not isomorphic to $E$.

If $E^+\not\simeq E$, then $K$ is a Galois field whose Galois group is the Klein 4-group.
Denote $L$ to be the subfield of $K$ which is the quadratic extension of $F$ distinct from $E$ and $E^+$.
Denote $\vartheta_{E'}$ to be the non-trivial elements in $\Gal(K/E')$, where $E'=E$, $E^+$ or  $L$.
For $E'=E^+$ or $L$, we have the canonical isomorphism from $\Gal(K/E')$ to $\Gal(E/F)$ given by $\vartheta\mapsto \vartheta_{E'}\vert_E=\iota_{E/F}$ (defined in Page \ref{pg:iota-E}).
So $\vartheta_{E^+},\vartheta_{L}\in W_F\smallsetminus W_E$,
and $\vartheta_{E^+}$ is the natural extension of $\iota_{E/F}$ from $E$ to $E^+\otimes_F E$.
Remark that when $E^+\simeq E$ we only consider the Galois group $\Gal(E/F)$, whose nontrivial element is denoted by $\vartheta_{E^+}$ for consistency.

Define $\GU^\circ_{K/E^+}(n)$ to be the subgroup of $\Res_{E^+/F}\GU_n$ given by
$$
\GU^\circ_{K/E^+}(n)=\{g\in\Res_{K/F}\GL_n\colon {}^tg\cdot J'_n\cdot \vartheta_{E^+}(g)=\lam(g)J'_n,~\lam(g)\in\BG_m\}
$$
where $\BG_m$ is the multiplicative group over $F$.
In particular, if $E^+\simeq E$, then $\GU^\circ_{K/E^+}(n)$ is isomorphic to 
$$
G(\RU_n \times \RU_{n})=\{(g,h)\in\GU_n\times \GU_{n}\colon \lam(g)=\lam(h)\}.
$$
Referring to \cite[Section 2.3]{M10},
the complex dual group $\widehat{\GU}^{\circ}_{K/E^+}(n)$  is $\GL_n(\BC)\times \GL_n(\BC)\times \GL_1(\BC)$, and its $L$-group is given by
$$
{}^L \GU^\circ_{K/E^+}(n)=(\GL_n(\BC)\times \GL_n(\BC)\times \GL_1(\BC))\rtimes \Gal(K/F).
$$
The actions of $\vartheta_{E^+}$ and $\vartheta_{E}$ in $\Gal(K/F)$ are  given by
\begin{align*}
 & \vartheta_{E^+}\colon (g,h,a)\mapsto (J'_n {}^tg^{-1} J'^{-1}_n,J'_n {}^t h^{-1} J'^{-1}_n,a\det(g)\det(h)),\\ 
 & \vartheta_E\colon (g,h,a)\mapsto (h,g,a).	
\end{align*}
Remark that the action of $\vartheta_E$ is trivial if $E^+\simeq E$.

If $E^+ \simeq E$, the group $\Res_{E/F}\GU^\circ_{K/E^+}(n)$ is isomorphic to $\Res_{K/F}\GL_n\times \Res_{E/F}\GL_1$ and its complex dual group is
$$
\GL_n(\BC)\times\GL_n(\BC)\times\GL_n(\BC)\times\GL_n(\BC)\times\GL_1(\BC)\times\GL_1(\BC),
$$
and
the action  of $\vartheta_{E^+}$ on its complex dual group $\GL^{\times 4}_n(\BC)\times\GL^{\times 2}_1(\BC)$ is given by
$$
\vartheta_{E^+}\colon (g_1,g_2,h_1,h_2,a,b)\mapsto (h_1,h_2,g_1,g_2,b,a)
$$
where $g_1,g_2,h_1,h_2\in\GL_n(\BC)$ and $a,b\in\GL_1(\BC)$.
If $E^+\not\simeq E$, then the action of $\vartheta_{E}$ is given by
$$
\vartheta_{E}\colon (g_1,g_2,h_1,h_2,a,b)\mapsto (g_2,g_1,h_2,h_1,a,b).
$$

Let  ${}^L b_{E/F}$ and ${}^L i_{E^+/F}$ be the restriction and induction $L$-embeddings of $L$-groups respectively.
According to the principle of Langlands functoriality, the homomorphisms ${}^L b_{E/F}$ and ${}^L i_{E^+/F}$ between $L$-groups will be reflected by correspondences between automorphic representations, which are {\it Base change} and {\it Automorphic induction}.

Let $\xi$ be a character of $\BA^\times_K$ and we identity it as a character of $W_K$ via the global class field theory.
Suppose that $\xi$ satisfies the following conditions: for $w\in W_K$
\begin{equation}\label{eq:xi}
\begin{array}{ll}
\xi(\vartheta_{E^+}\cdot w\cdot \vartheta_{E^+}^{-1})=\xi(w)^{-1}
&\xi(\vartheta_{E^+}^2)=(-1)^n\\
\xi(\vartheta_E\cdot w\cdot \vartheta_E^{-1})=\xi(w) &
\xi(\vartheta^2_{E})=(-1)^n \quad \text{(for $E^+\not\simeq E$)}.
\end{array}
\end{equation}
Equivalently,  $\xi$ as a character of $\BA_K^\times$, via the global class field theory, satisfies
\begin{align}
&\xi\circ\vartheta_{E^+}=\xi^{-1} && \xi\vert_{\BA_{E^+}^\times}=\delta^n_{E^+/F}\\
&\xi\circ\vartheta_{L}=\xi^{-1} && \xi\vert_{\BA_{L}^\times}=1	\quad \text{(for $E^+\not\simeq E$)}. \label{eq:xi-L}
\end{align}
Note that for $E^+\not\simeq E$ we give an equivalent condition \eqref{eq:xi-L} to the one in \eqref{eq:xi} for a shorter expression.

\subsubsection{Base change}
More precisely, for the group ${}^L\GU_{E/F}(n)$, define 
$$
{}^L b_{E/F}\colon {}^L\GU_{E/F}(n) \to {}^L \Res_{E/F}(\GL_n\times \GL_1)
$$
given by: for $g\in \GL_n(\BC)$ and $w\in W_E$,
\begin{align*}
& (g,a)\rtimes 1\mapsto (g,{}^tg^{-1}, a, a\det(g))\rtimes 1  \\
&(I_n,1)\rtimes w\mapsto (I_n,I_n,1,1)\rtimes w\\
&(I_n,1)\rtimes \iota_{E/F} \mapsto (J'_n,J'^{-1}_n,1,1)\rtimes \iota_{E/F},
\end{align*}
where $\iota_{E/F}$ is considered as  a preimage in $W_F\smallsetminus W_E$.
(The definition is independent on the choice of the preimage of $\iota_{E/F}$.)

For the group ${}^L\GU^\circ_{K/E^+}(n)$, define 
$$
{}^L b_{E/F,\xi}\colon {}^L\GU^\circ_{K/E^+}(n)  \to {}^L(\Res_{K/F}\GL_n\times \Res_{E/F}\GL_1)
$$
given by: for $g,h\in \GL_n(\BC)$ and $w\in W_K$,
\begin{align*}
& (g,h,a)\rtimes 1\mapsto (g,h,{}^tg^{-1},{}^th^{-1}, a, a\det(g)\det(h))\rtimes 1  \\
&(I_n,I_n,1)\rtimes w\mapsto (\xi(w)I_n,\xi(w)I_n,\xi(w)^{-1}I_n,\xi(w)^{-1}I_n,\xi(w)^{-n},\xi(w)^{n})\rtimes w\\
&(I_n,I_n,1)\rtimes \vartheta_{E^+} \mapsto ((-1)^nJ'_n,J'_n,J'^{-1}_n,(-1)^nJ'^{-1}_n,1,(-1)^n)\rtimes \vartheta_{E^+}.
\end{align*} 
When $E^+\not\simeq E$,  define
$$
(I_n,I_n,1)\rtimes \vartheta_E\mapsto (i^nI_n,i^nI_n,i^{-n}I_n,i^{-n}I_n,(-i)^{n^2},(-i)^{n^2})\rtimes \vartheta_E.
$$

For example, for generic cuspidal representations of  $\GU_n(\BA)$,
we have the following standard base change proved by Kim and  Krishnamurthy
\begin{lm}[{\cite[Lemma 5.4]{KK08}}]\label{lm:Base-Change}
Suppose that $\pi$ is a globally generic cuspidal automorphic representation of $\GU_n(\BA)$. 
Let $\pi'$ be a generic cuspidal constitute of $\RU_n(\BA)$.
Then $b_{E/F}(\pi)=b_{E/F}(\pi')\otimes\bar{\omega}_\pi$ is the stable base change lift of $\pi$,
where $\bar{\omega}_\pi:=\omega_\pi\circ\iota_{E/F}$.
\end{lm}

\subsubsection{Automorphic induction}

Define
$$
{}^L i_{E^+/F,\xi}\colon {}^L\GU^\circ_{K/E^+}(n)\to {}^L\GU_{E/F}(2n)
$$
given by: for $g,h\in \GL_n(\BC)$ and $w\in W_K$,
\begin{align*}
& (g,h,a)\rtimes 1 \mapsto (\ppair{ \begin{smallmatrix}
g&0\\0&h
\end{smallmatrix}},a)\rtimes 1  \\
&(I_n,I_n,1)\rtimes w\mapsto (\xi(w)I_{2n},\xi(w)^{-n})\rtimes w\\
&(I_n,I_n,1)\rtimes \vartheta_{E^+} \mapsto (\ppair{ \begin{smallmatrix}
I_{n}&0\\0&I_{n}
\end{smallmatrix}},1)\rtimes \vartheta_{E^+}.
\end{align*} 
When $E^+\not\simeq E$,  define
$$
(I_n,I_n,1)\rtimes \vartheta_E\mapsto (i^n\ppair{ \begin{smallmatrix}
0&I_n\\I_n&0
\end{smallmatrix}},(-i)^{n^2})\rtimes \vartheta_E,
$$
where $i=\sqrt{-1}$.

Define
$$
{}^L i_{E^+/F}\colon {}^L(\Res_{K/F}\GL_n\times \Res_{E/F}\GL_1)\to {}^L \Res_{E/F}(\GL_{2n}\times \GL_1)
$$
given by:
\begin{align*}
 & (g_1,g_2,h_1,h_2,a,b)\rtimes 1\mapsto (\ppair{ \begin{smallmatrix}g_1&0\\0&g_2\end{smallmatrix}},\ppair{ \begin{smallmatrix}h_1&0\\0&h_2\end{smallmatrix}},a,b)\rtimes 1\\
&(I_n,I_n,I_n,I_n,1,1)\rtimes w\mapsto (I_{2n},I_{2n},1,1)\rtimes w\\
&(I_n,I_n,I_n,I_n,1,1)\rtimes \vartheta_{E^+} \mapsto (I_{2n},I_{2n},1,1)\rtimes \vartheta_{E^+}.
\end{align*}
When $E^+\not\simeq E$,  define
$$
(I_n,I_n,I_n,I_n,1,1)\rtimes \vartheta_{E}\mapsto (i^n\ppair{ \begin{smallmatrix}
0&I_n\\I_n&0
\end{smallmatrix}},(-i)^n\ppair{ \begin{smallmatrix}
0&I_n\\I_n&0
\end{smallmatrix}},(-i)^{n^2},(-i)^{n^2})\rtimes 1.
$$

Following the definition,   it is easy to check the following diagram is commutative:
\begin{equation}\label{diagram}
\xymatrix{
	{}^L\GU^\circ_{K/E^+}(n) \ar[rrr]^{{}^L b_{E/F,\xi}}\ar[d]^{{}^L i_{E^+/F,\xi}}&&&  {}^L(\Res_{K/F}\GL_n\times \Res_{E/F}\GL_1) \ar[d]^{{}^L i_{E^+/F}}\\
	{}^L\GU_{E/F}(2n) \ar[rrr]^{{}^L b_{E/F}} &&&  {}^L \Res_{E/F}(\GL_{2n}\times \GL_1)
}
\end{equation}

Remark that when $E^+\simeq E$ the automorphic induction ${}^L i_{E^+/F,\xi}$ is the endoscopic lifting.

\subsection{Automorphic induction}\label{sec:a-i}
In this section, we will show the existence of the automorphic inductions under the weak Langlands functoriality associated to the $L$-homomorphism ${}^Li_{E^+/F,\xi}$.

\begin{thm}\label{thm:automorphic-induction}
Fix a choice of $\xi$ given in \eqref{eq:xi} and $K=E^+\otimes E$ be a quadratic extension of $E$.
Suppose that $\tau$ is an irreducible generic cuspidal automorphic representation of $\GU^\circ_{K/F}(n,\BA)$.

Then there exists an irreducible generic cuspidal automorphic representation $\pi$ of $\GU_{2n}(\BA)$, which is a functorial lift of $\tau$ corresponding to the $L$-homomorphism ${}^Li_{E^+/F,\xi}$ over almost all local places $\nu$.
\end{thm}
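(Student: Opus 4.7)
My approach is to derive this theorem from the commutative diagram \eqref{diagram} by a \emph{base-change-then-descend} strategy, reducing the existence of $\pi$ on $\GU_{2n}(\BA)$ to known automorphic induction for general linear groups together with the endoscopic classification reviewed in Section~\ref{sec:endoscopic-classificiation}. Throughout, I work with the two cases $E^+\simeq E$ and $E^+\not\simeq E$ in parallel.

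The plan is as follows. First, applying an analogue of Lemma~\ref{lm:Base-Change} (in the form developed in \cite{L11,M10} for the unitary similitude group $\GU^\circ_{K/E^+}(n)$) to the generic cuspidal $\tau$, I obtain a base change lift $\sigma := b_{E/F,\xi}(\tau)$, which is a generic automorphic representation of $\Res_{K/F}\GL_n(\BA)\times \Res_{E/F}\GL_1(\BA)$. Writing $\sigma = \sigma_0\boxtimes\omega$, the representation $\sigma_0$ is conjugate self-dual in the sense dictated by \eqref{eq:xi}: one has $\sigma_0^{\vartheta_{E^+}}\cong \sigma_0^\vee\otimes \xi^{-1}$, together with the parallel condition under $\vartheta_L$ when $E^+\not\simeq E$. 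Second, I apply classical automorphic induction for general linear groups: when $E^+\not\simeq E$, the extension $K/E$ is cyclic of degree $2$, so the Arthur--Clozel automorphic induction produces an automorphic representation $\Pi_0 := i_{K/E}(\sigma_0)$ on $\GL_{2n}(\BA_E)$; when $E^+\simeq E$ one has $K=E\times E$ and $\sigma_0 = (\sigma_1,\sigma_2)$, and one simply sets $\Pi_0 := \sigma_1\boxplus\sigma_2$. The commutativity of \eqref{diagram} together with the explicit formulas for ${}^L i_{E^+/F}$ and ${}^L i_{E^+/F,\xi}$ shows that $(\Pi_0,\omega')$ coincides with the lift predicted by ${}^L b_{E/F}$ of any potential representation on $\GU_{2n}(\BA)$ attached to ${}^L i_{E^+/F,\xi}(\tau)$, at almost every unramified place, where $\omega'$ is the similitude character determined from $\omega$ and $\xi$.

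Third is the descent step. The conditions \eqref{eq:xi} on $\xi$ are designed precisely so that $\Pi_0$ is conjugate self-dual of the correct parity: combining the $\vartheta_{E^+}$-equivariance of $\sigma_0$ with the behavior of $i_{K/E}$ under the Galois action, one verifies that $\Pi_0^{\iota_{E/F}}\cong \Pi_0^\vee \otimes \omega''$ with the correct twist $\omega''$, so that $\Pi_0$ lies in the image $\wt{\Phi}(2n)$ of the conjugate self-dual spectrum of $G_{E/F}(2n)$ with the parity $(-1)^{2n-1}\kappa$ matching an appropriate choice of $\chi_\kappa\in\CZ_E^\kappa$. Invoking Theorem~\ref{thm:Mok} (the $\Phi_2$-description of the tempered discrete spectrum of $\RU_{2n}$), there exists an irreducible generic tempered automorphic representation $\pi'$ of $\RU_{2n}(\BA)$ whose weak base change equals $\Pi_0$. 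Finally, Proposition~\ref{prop:coarse-unitary} together with the lifting procedure from $\RU_{2n}$ to $\GU_{2n}$ in Section~\ref{sec:endoscopic-GU} (see Conjecture~\ref{conj:endoscopy-GU}) promotes $\pi'$ to an irreducible generic automorphic representation $\pi$ of $\GU_{2n}(\BA)$ with prescribed central character matching $\omega$, and the local compatibility of the Satake parameters at unramified places follows by unwinding \eqref{diagram}.

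The main obstacle will be the \emph{cuspidality and genericity preservation} in the descent step. Abstractly, $\Pi_0$ on $\GL_{2n}(\BA_E)$ may be cuspidal or may be an isobaric sum, and one must ensure the descended representation on $\GU_{2n}(\BA)$ is cuspidal and generic. When $\sigma_0$ is cuspidal and not isomorphic to $\sigma_0^{\vartheta_E}$, the induction $\Pi_0 = i_{K/E}(\sigma_0)$ is cuspidal and the corresponding parameter lies in the discrete generic set $\Phi_2(\RU_{2n},\xi_{\chi_\kappa})$, for which the generic member of the packet is cuspidal by the irreducibility of the standard module; in the degenerate cases, the isobaric summands of $\Pi_0$ must be distinct conjugate self-dual cuspidals of the correct parity, again forcing the corresponding parameter into $\Phi_2(\RU_{2n},\xi_{\chi_\kappa})$. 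The remaining delicate point is the central-character matching controlled by the image $\alpha(\CS_{\wt{\phi}})$ in $Y$ (Section~\ref{sec:endoscopic-GU}), which fixes $\pi$ uniquely up to twisting by characters of $\GU_{2n}(\BA)/\GU_{2n}(F)\RU_{2n}(\BA)$ and completes the argument.
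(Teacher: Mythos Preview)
Your overall strategy matches the paper's: go around the commutative diagram \eqref{diagram} by base-changing $\tau$ to $\GL_n(\BA_K)\times\GL_1(\BA_E)$, inducing on the general linear side to $\GL_{2n}(\BA_E)$, and then descending to $\GU_{2n}$. That skeleton is correct, and the paper does exactly this.

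The substantive gap is in your descent step. You assert that ``the conditions \eqref{eq:xi} on $\xi$ are designed precisely so that $\Pi_0$ is conjugate self-dual of the correct parity,'' but conjugate self-duality is not the issue: the Galois-equivariance of automorphic induction gives $\Pi_0^{\iota_{E/F}}\cong\Pi_0^\vee$ (up to twist) for free. What is \emph{not} free is the \emph{parity}, i.e.\ which Asai $L$-function $L(s,\cdot,\Asai_{E/F}^{\pm})$ has a pole at $s=1$. To lie in $\Phi_2(\RU_{2n},\xi_{\chi_\kappa})$ and hence to descend via Theorem~\ref{thm:Mok}, each isobaric summand $i_{E^+/F}(\tau_{K,i})$ must have $L(s,i_{E^+/F}(\tau_{K,i}),\Asai_{E/F}\otimes\delta_{E/F})$ with a pole at $s=1$. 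You give no mechanism for this; the paper supplies it via Lemma~\ref{lm:Asai}, which factorizes
\[
L^S(s,i_{E^+/F}(\tau_K),\Asai_{E/F}\otimes\delta_{E/F}^m)
= L^{S_{E^+}}(s,\tau_K,\Asai_{K/E^+}\otimes\delta_{K/E^+}^m)\, L^{S_L}(s,\tau_K,\Asai_{K/L}\otimes\delta_{K/L}^m),
\]
proved by a place-by-place unramified computation. Since each $\tau_{K,i}$ already has the $K/E^+$-Asai pole (coming from $\tau_0$ living on $\RU_{K/E^+}(n)$), this lemma is exactly what transfers the parity across to $E/F$. Without it or a substitute, your claim that $\Pi_0$ has ``the correct parity'' is unsupported.

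A secondary point: for the promotion from $\RU_{2n}$ to $\GU_{2n}$ you appeal to Conjecture~\ref{conj:endoscopy-GU}, which is unnecessary here. The paper instead uses the unconditional \cite[Proposition~5.5]{KK08}: once $\omega=\omega_{i_{E^+/F}(\tau_K)}=\eta^{\iota_{E/F}}/\eta$ is known, one can directly extend the generic cuspidal $\pi_0$ on $\RU_{2n}$ to a generic cuspidal $\pi$ on $\GU_{2n}$ with central character $\eta^{\iota_{E/F}}$. Using the conjecture where a theorem suffices weakens the argument.
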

Such $\pi$ is called an {\it automorphic induction} of $\tau$, denoted by $i_{E^+/F,\xi}(\tau)$.

\begin{rmk}
Given $\xi$, ${}^Li_{E^+/F,\xi}$ is determined and
the generic automorphic induction of $\tau$ is expected to be unique. 
In general, the notion of automorphic induction is defined for all  automorphic representations and all irreducible admissible smooth representations of $\GU^\circ_{K/F}(n,F_\nu)$ over local fields. 
Here since we only consider the automorphic inductions between generic cuspidal representations, $i_{E^+/F,\xi}(\tau_\nu)$  over all local places is expected to match local Langlands functorial lift of $\tau_\nu$ via ${}^Li_{E^+/F,\xi}$. That is, $\pi=i_{E^+/F,\xi}(\tau)$ is a strong Langlands functorial lifting of $\tau$.
We leave the discussion in the future.  
\end{rmk}

Before proving Theorem \eqref{thm:automorphic-induction}, let us prove the following identity first to obtain a formula of the Asai $L$-functions of the automorphic induction.
\begin{lm}\label{lm:Asai}
Let $K/F$, $E^+/F$ and $L/F$ as above and $E^+\not\simeq E$.
Suppose that $\tau_K$ is a cuspidal automorphic representation of $\GL_n(\BA_K)$ and $i_{E^+/F}(\tau_K)$ is its automorphic induction on $\GL_{2n}(\BA_E)$.

Then 
\begin{align}
&L^S(s,i_{E^+/F}(\tau_K),Asai_{E/F}\otimes\delta_{E/F}^m)\label{eq:lemma}\\
=&L^{S_{E^+}}(s,\tau_K,Asai_{K/E^+}\otimes\delta_{K/E^+}^m)L^{S_{L}}(s,\tau_K,Asai_{K/L}\otimes\delta_{K/L}^m),\nonumber 
\end{align}
where $m$ is an integer, $S$ is a finite set of places of $F$ including of all ramified places and archimedean places
and $S_E$ (resp. $S_L$) is the set of places of $E$ (resp. $L$) lying over the places in $S$. 
\end{lm} 
\begin{proof}
To prove this lemma, it suffices to show that
\begin{equation}\label{eq:lemma-proof}
L(s,i(\tau_{K,\nu}),Aasi\otimes\delta_{E/F}^m)
=L(s,\tau_{K,\nu},Aasi\otimes\delta_{K/E^+}^m)L(s,\tau_{K,\nu},Aasi\otimes\delta_{K/L}^m),
\end{equation} 
over all unramified places $\nu$ of $F$, that is, $\nu\notin S$.


Assume that $\nu$ splits as $v_E w_E$ in $E$, and $v_E$ (resp. $w_E$) splits as $v_1v_2$ (resp. $w_1w_2$) in $K$.
In this case, $\nu$ splits as $v_{E^+}w_{E^+}$ (resp. $v_Lw_L$) in $E^+$ (resp. $L$).
Since $\Gal(E/F)$ acts transitively on $\{v_E, w_E\}$,
both $\vartheta_{E^+}$ and $\vartheta_{L}$ act transitively on $\{\{v_1,v_2\},\{w_1,w_2\}\}$.
Then, without loss of generality, we may assume that $v_{E^+}=v_1w_1$, $w_{E^+}=v_2w_2$,
$v_{L}=v_1w_2$ and $w_L=v_2w_1$.

Let $\tau_{K,\nu}=\tau_{v_1}\otimes\tau_{v_2}\otimes\tau_{w_1}\otimes\tau_{w_2}$ where $\tau_{v_i}$ and $\tau_{w_i}$ are unramified representation of $\GL_{n}(K_{v_i})$ and $\GL_n(K_{w_i})$ respectively, and $K_{v_i}\cong K_{w_i}\cong F_\nu$.

By $v_{E^+}=v_1w_1$ and $w_{E^+}=v_2w_2$, we have $\tau_{K,v_{E^+}}=\tau_{v_1}\otimes\tau_{w_1}$ and $\tau_{K,v_{E^+}}=\tau_{v_2}\otimes\tau_{w_2}$, and 
\begin{equation}\label{eq:split-split-E+}
\begin{array}{l}
L(s,\tau_{K,v_{E^+}},Aasi\otimes\delta_{K/E^+}^m)=
L(s,\tau_{v_1}\otimes \tau_{w_1})\\
L(s,\tau_{K,w_{E^+}},Aasi\otimes\delta_{K/E^+}^m)=L(s,\tau_{v_2}\otimes \tau_{w_2}).
\end{array}
\end{equation}
Similarly, we have
\begin{equation}\label{eq:split-split-L}
\begin{array}{l}
L(s,\tau_{K,v_{L}},Aasi\otimes\delta_{K/L}^m)=L(s,\tau_{v_1}\otimes\tau_{w_2})\\
L(s,\tau_{K,w_{L}},Aasi\otimes\delta_{K/L}^m)=L(s,\tau_{v_2}\otimes\tau_{w_1}). 
\end{array}
\end{equation}
Note that over the split places, $\delta_{K/E^+}^m$ and $\delta_{K/L}^m$ are trivial.

By $\nu=v_{E^+}w_{E^+}=v_{L}w_L$,
\begin{align}
 &L(s,\tau_{K,\nu},Aasi\otimes\delta_{K/E'}^m) \label{eq:split-split-E'}\\
=&L(s,\tau_{K,v_{E'}},Aasi\otimes\delta_{K/E'}^m)L(s,\tau_{K,w_{E'}},Aasi\otimes\delta_{K/E'}^m), \nonumber
\end{align}
where $E'=E^+$ or $L$.
Plugging \eqref{eq:split-split-E+} and \eqref{eq:split-split-L} into \eqref{eq:split-split-E'} respectively, we get the right hand side of \eqref{eq:lemma-proof}
\begin{align}
 &L(s,\tau_{K,\nu},Aasi\otimes\delta_{K/E^+}^m)L(s,\tau_{K,\nu},Aasi\otimes\delta_{K/L}^m)\label{eq:split-right}\\
=& L(s,\tau_{v_1}\otimes\tau_{w_1})L(s,\tau_{v_1}\otimes\tau_{w_2})L(s,\tau_{v_2}\otimes\tau_{w_1})L(s,\tau_{v_2}\otimes\tau_{w_2}).\nonumber
\end{align}

On the other hand, over the place $v_E$  of $E$, the automorphic induction $i(\tau_{K,v_E})$ is an unramified representation of $\GL_{2n}(E_{v_E})$
and its Satake parameter $c(i(\tau_{K,v_E}))=c(\tau_{v_1})\oplus c(\tau_{v_2})$.
Similarly, we have $c(i(\tau_{K,w_E}))=c(\tau_{w_1})\oplus c(\tau_{w_2})$.
Then
\begin{align}
 & L(s,i(\tau_{K,\nu}),Aasi\otimes\delta_{E/F}^m) \label{eq:split-split-induction}\\
=&L(s,i(\tau_{K,v_E}),Aasi\otimes\delta_{E/F}^m)L(s,i(\tau_{K,w_E}),Aasi\otimes\delta_{E/F}^m) \nonumber\\
=&L(s,\tau_{v_1}\otimes\tau_{w_1})L(s,\tau_{v_1}\otimes\tau_{w_2})L(s,\tau_{v_2}\otimes\tau_{w_1})L(s,\tau_{v_2}\otimes\tau_{w_2}).\nonumber
\end{align}
 
Comparing \eqref{eq:split-split-induction} and   \eqref{eq:split-right}, we obtain \eqref{eq:lemma-proof} for the places $\nu$ of $F$ totally split over $K$.

Assume that $\nu$ splits as $v_E w_E$ in $E$.
Since $\Gal(K/F)$ is the Klein 4-group, $v_E$ and $w_E$ are inert in $K$.
Since $\vartheta_{E^+}$ and $\vartheta_{L}$ act transitively on $\{v_E,w_E\}$,
$\nu$ as a place of $F$ is inert in $E^+$ and $L$.
Moreover, as a place of $E^+$ and $L$, $\nu$ splits as $v_E w_E$ in $K$.

Let $\tau_{K,\nu}=\tau_{K,v_E}\otimes\tau_{K,w_E}$ where $\tau_{K,v_E}$ and $\tau_{K,w_E}$ are unramified representation of $\GL_n(K_{v_E})$ and $\GL_n(K_{w_E})$ respectively.
Since $\nu$ is split in $K$, we have
$$
L(s,\tau_{K,\nu},Aasi\otimes\delta_{K/E^+}^m)=L(s,\tau_{K,\nu},Aasi\otimes\delta_{K/L}^m) 
=L(s,\tau_{K,v_{E}}\otimes\tau_{K,w_E}).
$$
Thus
\begin{equation}\label{eq:split-inert-E'-L}
L(s,\tau_{K,\nu},Aasi\otimes\delta_{K/E^+}^m)L(s,\tau_{K,\nu},Aasi\otimes\delta_{K/L}^m)
=L(s,\tau_{K,v_{E}}\otimes\tau_{K,w_E})^2.
\end{equation}

Since the place $\nu$ of $F$ splits as $v_E w_E$ in $E$, the automorphic induction $i(\tau_{K,\nu})$ splits as $i(\tau_{K,v_E})\otimes i(\tau_{K,w_E})$ and we have
\begin{equation}\label{eq:induction-split}
L(s,i(\tau_{K,\nu}),Aasi\otimes\delta_{E/F}^m)=L(s,i(\tau_{K,v_E})\otimes i(\tau_{K,w_E})).
\end{equation}
Since $v_E$ and $w_E$ are inert in $K$, the Satake parameter $c(i(\tau_{K,v_E}))$ equals $\sqrt{c_{v_E}(\tau_{K,v_E})}\oplus -\sqrt{c_{v_E}(\tau_{K,v_E})}$, similarly over the place $w_E$.
Then
\begin{equation}\label{eq:induction-split-tensor}
L(s,i(\tau_{K,v_E})\otimes i(\tau_{K,w_E}))=L(s,\tau_{K,v_{E}}\otimes\tau_{K,w_E})^2.	
\end{equation}

Plugging \eqref{eq:induction-split-tensor} into \eqref{eq:induction-split} and then comparing \eqref{eq:induction-split} and  \eqref{eq:split-inert-E'-L},
we conclude \eqref{eq:lemma-proof} for the case $\nu$ split in $E$ but not totally split in $K$.

Assume that $\nu$ is inert in $E$ and then $\nu$ is split in $K$, written as $\nu=vw$.
Now, we have the place $\nu$ of $F$ is split either in $E^+$ or in $L$. 
Due to symmetry, it is enough to consider the case that $\nu$ is split in $E^+$.
It follows that $\nu$ splits as $vw$ in $E^+$ and  is inert in $L$.

Let $\tau_{K,\nu}=\tau_{K,v}\otimes\tau_{K,w}$ where $\tau_{K,v}$ and $\tau_{K,w}$ are unramified representation of $\GL_n(K_{v})$ and $\GL_n(K_{w})$ respectively.
Remark that $K_{v}$ is isomorphic to $E_\nu$, which is the unramified quadratic extension of $F_\nu$.
Since the place $\nu$ of $L$ splits as $vw$ in $K$, we have
\begin{equation}\label{eq:inert-split-L}
L(s,\tau_{K,\nu},Aasi\otimes\delta_{K/L}^m)=L(s,\tau_{K,v}\otimes \tau_{K,w}).	
\end{equation}
Since $v$ and $w$ are two places of $E^+$, $K_{v}$ and $K_w$ are the unramified quadratic extensions of $E^+_v$ and $E^+_w$ respectively.
Thus
\begin{equation}\label{eq:inert-split-E+}
L(s,\tau_{K,\nu},Aasi\otimes\delta_{K/E^+}^m)=L(s,\tau_{K,v},Aasi\otimes\delta_{K/E^+}^m)
L(s,\tau_{K,w},Aasi\otimes\delta_{K/E^+}^m).	
\end{equation}
Plugging \eqref{eq:inert-split-E+} and \eqref{eq:inert-split-L} into the right hand side of \eqref{eq:lemma-proof}, we have
\begin{align}
 &L(s,\tau_{K,\nu},Aasi\otimes\delta_{K/E^+}^m) L(s,\tau_{K,\nu},Aasi\otimes\delta_{K/L}^m)\label{eq:inert-split-right}\\
=&L(s,\tau_{K,v},Aasi\otimes\delta_{K/E^+}^m)L(s,\tau_{K,w},Aasi\otimes\delta_{K/E^+}^m)
L(s,\tau_{K,v}\otimes \tau_{K,w})\nonumber.	
\end{align}

On the other hand, since  $\nu$ as a local place of $E$ is split in $K$, the automorphic induction $i(\tau_{K,\nu})$ is an unramified representation of $\GL_{2n}(E_\nu)$
and its Satake parameter $c(i(\tau_{K,\nu}))=c(\tau_{K,v})\oplus c(\tau_{K,w})$.
Hence
\begin{align}
&L(s,i(\tau_{K,\nu}),Aasi\otimes\delta_{E/F}^m) \label{eq:inert-split-E}\\
=&L(s,\tau_{K,v},Asai\otimes\delta_{E/F}^m)L(s,\tau_{K,w},Asai\otimes\delta_{E/F}^m)L(s,\tau_{K,v}\otimes \tau_{K,w}).\nonumber
\end{align}

Comparing \eqref{eq:inert-split-right} and  \eqref{eq:inert-split-E}, one obtains \eqref{eq:lemma-proof} for the case $\nu$ inert,
which completes the proof of this lemma.
\end{proof}

Finally, let us complete the proof of Theorem \ref{thm:automorphic-induction}.
\begin{proof}[Proof of Theorem \ref{thm:automorphic-induction}]
First, we claim that there exists a base change $b_{E/F,\xi}(\tau)$ for $\tau$,
which is an automorphic representation of $\GL_n(\BA_K)\times \GL_1(\BA_E)$.
It corresponds the $L$-homomorphism given by the top-horizontal arrow in Diagram \eqref{diagram}.
In fact, when $\xi$ is trivial, the standard base change has been verified in Lemma \ref{lm:Base-Change} and \cite[Theorem 1.1]{KK05}. 
Here we apply the same arguments with Lemma \ref{lm:Base-Change} by combing with the endoscopic classification of unitary groups in Theorem \ref{thm:Mok}.

Referring to \cite[Theorem 5.3]{KK08}, the restriction of $\tau$ to $\RU_{K/E^+}(n,\BA_{E^+})$ is a direct sum of cuspidal automorphic representations of $\RU_{K/E^+}(n,\BA_{E^+})$.
Let $\tau_0$ be a generic constituent of the restriction of $\tau$.
Similar to the proof of Lemma \ref{lm:Base-Change}, $b_{E/F,\xi}(\tau)=b_{E/F,\xi}(\tau_{0})\otimes\eta$, which is an automorphic representation of $\GL_{n}(\BA_K)\times\GL_1(\BA_E)$.
Here $b_{E/F,\xi}(\tau_{0})$ is a base change of $\tau_0$ from $\RU_{K/E^+}(n,\BA_{E^+})$ to $\GL_{n}(\BA_K)$ and $\eta$ is an idele character of $\GL_1(\BA_E)$.

By Theorem \ref{thm:Mok}, we have  $b_{E/F,\xi}(\tau_{0})$ is an isobaric sum of form
\[
\tau_K:=b_{E/F,\xi}(\tau_{0})=\tau_{K,1}\boxplus\tau_{K,2}\boxplus\cdots\boxplus \tau_{K,r},	
\]
where $\tau_{K,i}$ is an irreducible cuspidal automorphic representation of $\GL_{n_i}(\BA_K)$ and $L(s,\tau_{K,i}, Asai_{K/E^+}\otimes\delta^{2n-1}_{K/E^+})$ has a pole at $s=1$.
And $\omega_{\tau_K}\vert_{\BA^\times_E}=\frac{\eta^{\iota_{E/F}}}{\eta}\delta^n_{K/E}$ (see \cite[{Theorem 1.1}]{Shin} for instance),
where $\eta^{\iota_{E/F}}=\eta\circ\iota_{E/F}$ is the Galois twist.  

Second, we consider the automorphic induction $i_{E^+/F}(\tau_K)$,
which is an automorphic representation of $\GL_{2n}(\BA_E)$. 
It corresponds to the $L$-homomorphism in the right-vertical arrow of Diagram \eqref{diagram}. 
Since $L(s,\tau_{K,i}, Asai_{K/E^+}\otimes\delta_{K/E^+})$  has a pole at $s=1$ for each $i$,
by Lemma \eqref{eq:lemma},  $L^S(s,i_{E^+/F}(\tau_{K,i}),Asai_{E/F}\otimes\delta_{E/F})$ has a pole at $s=1$.
Note that 
\[
i_{E^+/F}(\tau_{K})=i_{E^+/F}(\tau_{K,1})\boxplus i_{E^+/F}(\tau_{K,2})\boxplus\cdots\boxplus i_{E^+/F}(\tau_{K,r}).
\]
Also, since $i_{E^+/F}(\tau_K)$ is the automorphic induction of $\tau_K$, we have
$\omega=\omega_{\tau_K}\vert_{\BA^\times_E}\otimes \delta^n_{K/E}$,
 where $\omega$ is the central character of $i_{E^+/F}(\tau_K)$.
By $\omega_{\tau_K}\vert_{\BA^\times_E}=\frac{\eta^{\iota_{E/F}}}{\eta}\delta^n_{K/E}$, then $\omega=\frac{\eta^{\iota_{E/F}}}{\eta}$.

Finally, 
by \cite[Theorem 1.1]{KK05} and Theorem \ref{thm:Mok}, 
there exists a cuspidal generic automorphic representation $\pi_0$ of $\RU_{2n}(\BA_F)$  such that $b_{E/F}(\pi_0)=i_{E^+/F}(\tau_K)$ is the strong base change of $\pi_0$.
By $\omega=\frac{\eta^{\iota_{E/F}}}{\eta}$,
referring to Proposition 5.5 in \cite{KK08}, 
there exists a cuspidal generic automorphic representation $\pi$ of $\GU_{2n}(\BA_F)$ with central character $\omega_\pi=\eta^{\iota_{E/F}}$ such that $b_{E/F}(\pi)=i_{E^+/F}(\tau_K)\otimes \eta$.
Since the diagram is commutative, $\pi$ is the desired cuspidal automorphic representation of $\GU_{2n}(\BA_F)$ . 


\end{proof}

\subsection{Main conjecture}
In this section, analogous to \cite{GR00} and \cite[Theorem 2]{Y}, we conjecture a criterion on the existence of poles of   $L^S(s,\pi,\e{3}\otimes\chi)$ by using the automorphic induction defined in Theorem \ref{thm:automorphic-induction}.
In addition, we verify the conjecture for certain cuspidal automorphic representations from endoscopic lifting. 

\begin{conj}\label{conj:poles}
Suppose that $\pi$ is an irreducible cuspidal automorphic representation of $\GU_6(\BA)$ in the tempered discrete  spectrum defined in Conjecture \ref{conj:endoscopy-GU}. 

Then $L^S(s,\pi,\e{3}\otimes\chi)$ has a pole at $s=1$ if and only if $\pi$ is an automorphic induction $i_{E^+/F,\xi}(\tau)$ from an irreducible cuspidal automorphic representation $\tau$ of $\GU^\circ_{K/E^+}(3,\BA_F)$ in the tempered discrete  spectrum and $L(s,i_F(\omega_\tau)\otimes\chi)$ has a pole at $s=1$,
where $K=E\otimes_F E^+$ and $E^+/F$ is the quadratic extension associate to the non-trivial quadratic character $\omega_\pi\chi^2$,  and $\omega_\tau$ is the central character of $\tau$.
\end{conj}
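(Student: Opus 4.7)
The plan is to combine the endoscopic classification (Conjecture \ref{conj:endoscopy-GU}) with the stabilizer computation \eqref{eq:fix-group} in order to convert the analytic statement about poles into a group-theoretic condition on the global Arthur parameter. First, attach to $\pi$ its generic tempered global Arthur parameter $\wt{\phi}\colon L_F \to {}^L\GU_6$ (the $\SL_2(\BC)$-factor is trivial since $\pi$ is tempered). As explained in the discussion surrounding \eqref{eq:fix-group}, for such parameters the existence of a pole of $L^S(s,\pi,\e{3}\otimes\chi)$ at $s=1$ is equivalent to the existence of a nonzero invariant vector of $\Im(\e{3}\otimes\chi)\circ \wt{\phi}$ on $\BC^{20}$, which in turn is equivalent to the image of $(\e{3}\otimes\chi)\circ\wt{\phi}$ being conjugate into the stabilizer
\[
H_0 := [[((\GL_3\times\GL_3)\rtimes \apair{\eps})\times\GL_1]\rtimes \apair{\iota_{E/F}}]^\circ
\]
of the generic vector $v_0$.

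The second step identifies $H_0$ with the image of the $L$-embedding ${}^Li_{E^+/F,\xi}\colon {}^L\GU^\circ_{K/E^+}(3) \hookrightarrow {}^L\GU_6$ introduced in Section \ref{sec:conjecture}. The extension $E^+/F$ is forced to be the one cut out by the nontrivial quadratic character $\omega_\pi\chi^2$, as required by Theorem \ref{thm:pole-intro}; matching $\apair{\eps}$ with the action of $\vartheta_E$ that interchanges the two $\GL_3(\BC)$-factors of $\widehat{\GU}^\circ_{K/E^+}(3)$, matching $\apair{\iota_{E/F}}$ with $\vartheta_{E^+}$, and pinning down the auxiliary $\GL_1$-twist yields a concrete choice of character $\xi$ satisfying \eqref{eq:xi}. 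Granting the image condition, the factorization $\wt{\phi}={}^Li_{E^+/F,\xi}\circ \phi_\tau$ defines a tempered generic global parameter $\phi_\tau$ for $\GU^\circ_{K/E^+}(3)$; Theorem \ref{thm:automorphic-induction} then produces the desired tempered cuspidal $\tau$ of $\GU^\circ_{K/E^+}(3,\BA)$ with $i_{E^+/F,\xi}(\tau) \cong \pi$, confirmed by comparing Satake parameters at unramified places together with strong multiplicity one after the base change $b_{E/F}$. The determinant relations $a\det(g_1)=a\det(g_2)=1$ in \eqref{eq:fix-group} translate, via the structure of $\xi$ and the explicit formulas for the central characters, into the requirement that the $\GL_2$-induction $i_F(\omega_\tau)$ contains $\chi^{-1}$, equivalently that $L(s,i_F(\omega_\tau)\otimes\chi)$ has a pole at $s=1$.

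The hard part, and the reason the author defers a full argument to future work, is the converse direction: given $\pi = i_{E^+/F,\xi}(\tau)$ with the stated central character condition, one must show that the pole of $L^S(s,\pi,\e{3}\otimes\chi)$ at $s=1$ is actually realized rather than merely compatible with the parameter. Pulling $\e{3}\otimes\chi$ back along ${}^Li_{E^+/F,\xi}$ and decomposing into irreducible constituents, the crucial contribution reduces to the existence of a pole of $L(s,\tau_1\otimes\tau_2,\e{2}\otimes\mathrm{St})$ for an associated $\GL_4\times\GL_2$ pair (cf.\ Example \ref{ex}); this is an open problem in its own right. Once this automorphic input is granted, the non-vanishing of the global residue must still be propagated through \eqref{eq:CZ-L}, which requires arranging nonzero local zeta integrals at all places in $S$ via Lemma \ref{lm:local-analytic}, and tracing the Siegel--Weil-type identity of Corollary \ref{cor:pole} to ensure the associated unitary Ginzburg--Rallis period does not vanish identically on the class of automorphic inductions in question.
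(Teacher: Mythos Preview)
This statement is a \emph{conjecture}, and the paper does not claim to prove it. What follows the statement in the paper is not a proof but (i) a heuristic justification via the $L$-function factorization \eqref{eq:L-functions} and the representation-theoretic decomposition $\e{3}\circ{}^Li_{E^+/F,\xi}=\rho_W\oplus\rho_{W^\perp}$ of \eqref{eq:decomp}, and (ii) a verification in one special case, Proposition~\ref{pro:n=3}, where the base change of $\pi$ is of type $(3,3)$ and $E^+\simeq E$. The paper explicitly says the full proof is deferred to future work. So there is no ``paper's own proof'' to compare against; the right comparison is between your sketch and the paper's heuristics and partial result.

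Your sketch correctly captures the paper's motivating heuristic from the introduction: the stabilizer \eqref{eq:fix-group} of the generic vector is essentially the image of ${}^Li_{E^+/F,\xi}$, so a pole should correspond to the parameter factoring through it. But there are genuine gaps you gloss over. First, the equivalence ``pole at $s=1$ $\Longleftrightarrow$ image of $(\e{3}\otimes\chi)\circ\wt\phi$ fixes a nonzero vector'' is itself conjectural: it presupposes the global Langlands group $L_F$, whereas the paper works with the substitute $L_{\phi^n}\times\SL_2(\BC)$ and isobaric data. Second, even granting that the parameter factors through ${}^Li_{E^+/F,\xi}$, you still need a \emph{descent}: producing an actual cuspidal $\tau$ on $\GU^\circ_{K/E^+}(3)$ whose induction is $\pi$. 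Theorem~\ref{thm:automorphic-induction} only goes in the forward direction (from $\tau$ to $\pi$); invoking it here is circular.

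Finally, you have the difficulty of the two implications inverted relative to the paper. Given the factorization \eqref{eq:L-functions}, the direction ``$\pi=i_{E^+/F,\xi}(\tau)$ and $L(s,i_F(\omega_\tau)\otimes\chi)$ has a pole $\Rightarrow$ $L^S(s,\pi,\e{3}\otimes\chi)$ has a pole'' is the more transparent one: the pole comes directly from the $\rho_W$-factor, and one only needs non-vanishing of the $18$-dimensional factor at $s=1$. The genuinely hard direction is the converse: showing that a pole forces $\pi$ to be an automorphic induction. This is where the case-by-case analysis over endoscopic types $(n_1,\dots,n_k)$ enters, and the $\GL_4\times\GL_2$ input you mention (criterion for poles of $L(s,\tau_1\otimes\tau_2,\e{2}\otimes\St)$) is needed precisely here, for the $(4,2)$ type, not for the sufficiency direction. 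The paper's Example~\ref{ex} carries this out only for the $(3,3)$ type with $E^+\simeq E$.
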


Here $i_F$ is an automorphic induction from the central character $\omega_\tau$ of $\GU^\circ_{K/E^+}(1,\BA_F)$ to an automorphic representation of $\GL_2(\BA_F)$,
which is defined by 
the corresponding   $L$-homomorphism ${}^Li_F$:  
${}^Li_F((1,1,1)\rtimes w)=I_2$ for $w\in W_K$, and
\begin{equation}\label{eq:induction-ordinary}
{}^Li_F((g,h,a)\rtimes 1)= 
a\left(\begin{smallmatrix}
g&0\\ 0&  h	
\end{smallmatrix}\right),\quad 
{}^Li_F((1,1,1)\rtimes \vartheta_{E})={}^Li_F((1,1,1)\rtimes \vartheta_{E^+})
=\left(\begin{smallmatrix}
0&1\\ 1& 0	
\end{smallmatrix}\right).
\end{equation}

Remark that similar to \cite{GR00} and \cite{Y}, one can replace the condition on $L(s,i_F(\omega_\tau)\otimes\chi)$ by an equivalent condition on $\omega_\tau$,
which is relative wordy. 
So we choose the current version.

To justify Conjecture \eqref{conj:poles}, we may decompose the twisted exterior cube $L$-function of $i_{E^+/F,\xi}(\tau)$ as follows
\begin{equation}\label{eq:L-functions}
L(s,i_{E^+/F,\xi}(\tau),\e{3}\otimes\chi)
=L(s,i_F(\omega_\tau)\otimes\chi)
L(s,\e{2}\tau\times\tau^{\vartheta_{E^+/F}}\otimes\chi)	
\end{equation}
where  $\tau^{\vartheta_{E^+/F}}=\tau\circ \vartheta_{E^+/F}$ is the Galois twist. 

Next, let us legitimate the decomposition \eqref{eq:L-functions} 
by explicating the $L$-embedding $\e{3}\circ {}^L i_{E^+/F,\xi}$ of ${}^L\GU^\circ_{K/E^+}(3)$ into $\GL(\e{3}\BC^6)$.

First, denote 
\begin{equation}\label{eq:W-basis}
W={\rm Span}_\BC\{e_1\wedge e_2\wedge e_3,e_4\wedge e_5\wedge e_6\}.	
\end{equation}
Under the symplectic form $q(\cdot,\cdot)$ on $\e{3}\BC^6$ defined in \eqref{eq:q},
denote $W^\perp$ to be the orthogonal complementary of $W$, which equals 
\begin{equation*}\label{eq:basis-perp}
W^\perp={\rm Span}_\BC~\CB_3\smallsetminus 
\{e_1\wedge e_2\wedge e_3,e_4\wedge e_5\wedge e_6\},
\end{equation*}
where $\CB_3$ is the basis defined in \eqref{eq:basis}.
Thus the representation $\e{3}\circ {}^Li_{E^+/F,\xi}$ is decomposed as the direct sum of two irreducible representations
\begin{equation}\label{eq:decomp}
\e{3}\circ {}^Li_{E^+/F,\xi}=\rho_W\oplus \rho_{W^\perp}
\end{equation} 
where  $(\rho_W, W)$ and $(\rho_{W^\perp}, W^\perp)$ are irreducible representations of ${}^L\GU^\circ_{K/E^+}(3)$.
Note that these two representations are independent with the choice of $\xi$.

Second, we give explicit descriptions on the representations $(\rho_W, W)$ and $(\rho_{W^\perp}, W^\perp)$, respectively.
Under the choice of basis in \eqref{eq:W-basis}, $\rho_W$ maps
\begin{align*}
 &  (g,h,a)\rtimes 1\mapsto 
a\left(\begin{smallmatrix}
\det(g)&0\\ 0&  \det(h)	
\end{smallmatrix}\right) &&
 (I_3,I_3,1)\rtimes w\mapsto I_2 \text{ (for $w\in W_K$)}\\
& (I_3,I_3,1)\rtimes \vartheta_{E}\mapsto \left(\begin{smallmatrix}
0&1\\ 1& 0	
\end{smallmatrix}\right) &&
(I_3,I_3,1)\rtimes \vartheta_{E^+}\mapsto \left(\begin{smallmatrix}
0&1\\ 1& 0	
\end{smallmatrix}\right),
\end{align*}
which is the $L$-homomorphism ${}^Li_F$ defined in \eqref{eq:induction-ordinary}.

To describe the representation $(\rho_{W^\perp}, W^\perp)$, let us start with the
subgroup $\CH=\widehat{\GU}^\circ_{K/E^+}(3)\rtimes \langle\vartheta_{E^+/F}\rangle$ of ${}^L\GU^\circ_{K/E^+}(3)$ of index at most 2.
We extend the embedding of $\widehat{\GU}^\circ_{K/E^+}(3)$ into $\GL_6(\BC)$ via the block-diagonal matrices, 
to an irreducible representation of $\CH$, denoted by $\St_{\CH}$, 
namely,
$$
\St_{\CH}\colon (g,h,a)\mapsto a \begin{pmatrix}
g&0\\ 0&  h	
\end{pmatrix},\quad
 (I_3,I_3,1)\rtimes\vartheta_{E^+/F}\mapsto \begin{pmatrix}
0&I_3\\ I_3& 0	
\end{pmatrix}.
$$
Restricting into the subgroup $\widehat{\GU}^\circ_{K/E^+}(3)$ of $\CH$, 
$\St_\CH$ is semisimple and decomposed by $V_1\oplus V_2$,
where $V_1$ and $V_2$ are isomorphic to the standard representations of $\GL_3(\BC)$, corresponding to the above block-diagonal emedding.

Define $\e{2}_{\CH}$ to be the exterior representations of $\CH$ by, for $(g,h,a)\in \GL_3(\BC)\times\GL_3(\BC)\times \GL_1(\BC)$
$$
\e{2}_{\CH}\colon (g,h,a)\mapsto a \begin{pmatrix}
\e{2}g&0\\ 0& \e{2}h	
\end{pmatrix},\quad
 (I_3,I_3,1)\rtimes\vartheta_{E^+/F}\mapsto \begin{pmatrix}
0&I_3\\ I_3& 0	
\end{pmatrix},
$$
where $\e{2}$ is the exterior cube representation of $V_{i}$, i.e., for $g\in\GL_3(\BC)$, $\e{2}g=\det(g)\cdot {}^tg^{-1}$.
Then $\e{2}_{\CH}\otimes\St_{\CH}$ is the tensor representation of the above two of $\CH$, which is of dimension 36 and 
a direct sum of two irreducible representations of $\CH$.
Indeed,
$$
(\e{2}V_1\oplus \e{2}V_2)\otimes (V_1\oplus V_2)
=(\e{2}V_1\otimes V_2\oplus \e{2}V_2\otimes V_1)
\oplus (\e{2}V_1\otimes V_1\oplus \e{2}V_2\otimes V_2),
$$
which are denoted by $\e{2}_{\CH}\otimes\St^{\vartheta_{E^+/F}}_{\CH}$ and $\e{2}_{\CH}\otimes\St_{\CH}$ respectively. 
Note that both of these two representations are of dimension 18.

It is easy to check that the representation $\e{2}_{\CH}\otimes\St^{\vartheta_{E^+/F}}_{\CH}$ can be extended to  a representation of ${}^L\GU^\circ_{K/E^+}(3)$.
We  denote it by $\e{2}\otimes\St^{\vartheta_{E^+/F}}$ if no confusion is caused.
Then it is isomorphic to the representation $\rho_{W^\perp}$ in \eqref{eq:decomp},
that is
$$\rho_{W^\perp}=\e{2}\otimes\St^{\vartheta_{E^+/F}}.$$
In fact, 
\begin{align*}
 &\rho_{W^\perp}\colon	(I_3,I_3,1)\rtimes w\mapsto I_{18}\qquad \text{ (for $w\in W_K$)} \\
 & \rho_{W^\perp}((I_3,I_3,1)\rtimes\vartheta_{E/F})=
 \rho_{W^\perp}((I_3,I_3,1)\rtimes\vartheta_{E^+/F}).
\end{align*} 

\subsubsection{Example}\label{ex}
In this example, we will prove that Conjecture \ref{conj:poles} holds for some cuspidal representations arising from certain endoscopic lifting.

Let $\pi$ be an irreducible generic cuspidal automorphic representation of $\GU_6(\BA)$,
whose restriction to $\RU_6(\BA)$ is a direct sum of cuspidal automorphic representations of $\RU_6(\BA)$.
Let $\pi_0$ be a generic cuspidal constitute of $\pi\vert_{\RU_6(\BA)}$
and  $b_{E/F}(\pi_0)$ be the strong base change lift of $\pi_0$ (defined in \cite{KK05} for instance). 
Then $b_{E/F}(\pi)=b_{E/F}(\pi_0)\otimes\bar{\omega}_\pi$ is the stable base change lift of $\pi$,
where $\bar{\omega}_\pi:=\omega_\pi\circ\iota_{E/F}$. 

Assume that $b_{E/F}(\pi_0)$ is of the form 
\begin{equation}\label{eq:base-change-pi0}
b_{E/F}(\pi_0)=\sig_1\boxplus\sig_2\boxplus\cdots\boxplus\sig_{k},	
\end{equation}
where all $\sig_i$  are (unitary) cuspidal representations of $\GL_{n_i}(\BA_E)$ such that $L(s,\sig_i,Asai\otimes\delta_{E/F})$ has a simple pole at $s = 1$ and $\sum_{i=1}^k n_i=6$. 
Moreover, 
$$
\omega_{\sig_i}\vert_{\BA^\times}=\begin{cases}
	1  &\text{ if $n_i$ is even}\\
	\delta_{E/F}  &\text{ if $n_i$ is odd,}
\end{cases}
$$
and $\prod_{i=1}^{k}\omega_{\sig_i}=\omega_\pi/\bar{\omega}_\pi$.
Without loss of generality, we may assume 
$$
n_1\geq n_2\geq\cdots \geq n_r>0.
$$

Next, let us verify Conjecture \ref{conj:poles} for the cuspidal representation  $\pi$ 
that is an endoscopic lifting from 
a generic cuspidal automorphic representation $\tau$ of $\RG(\RU_3\times\RU_3)$.

\begin{pro} \label{pro:n=3}
Following the above notation, suppose that  
$n_1=n_2=3$ in \eqref{eq:base-change-pi0} and $\omega_\pi\chi^2\vert_{\BA^\times}=\delta_{E/F}$.

Then $L^S(s,\pi,\e{3}\otimes\chi)$ has a pole at $s=1$ if and only if   $L(s,i_F(\omega_\tau)\otimes\chi)$ has a pole at $s=1$. 
\end{pro}

\begin{proof}
We prove Conjecture \ref{conj:poles} by studing the exterior cube $L$-function of its base change $b_{E^+/F}(\pi)$.
By the assumption, we have $b_{E/F}(\pi_0)=\sig_1\boxplus \sig_2$,
where $\sig_1$ and $\sig_2$ are cuspidal automorphic representations of $\GL_3(\BA_E)$.
In addition, $\omega_{\sig_1}\omega_{\sig_2}=\omega_\pi/\bar{\omega}_\pi$.
By $\omega_\pi\vert_{\BA^\times}\cdot \chi^2=\delta_{E^+/F}$, we have
$$
\omega_{\sig_1}\omega_{\sig_2}(\bar{\omega}_\pi\chi_E)^2
=(\omega_\pi\vert_{\BA}\cdot \chi^2)\circ N_{E/F}=\delta_{E^+/F}\circ N_{E/F}.
$$
More precisely,
\begin{equation} \label{eq:central-two-sigs}
\omega_{\sig_1}\omega_{\sig_2}\cdot(\bar{\omega}_\pi\chi_E)^2=\begin{cases}
	1 &\text{ if }E^+\simeq E\\
	\delta_{K/E} &\text{ if }E^+\not\simeq E.
\end{cases}	
\end{equation}

Note that 
\begin{equation}\label{eq:L-BC-GU-E+=E}
L^S(s,b_{E/F}(\pi_0),\e{3}\otimes\bar{\omega}_\pi\chi_E)=L^S(s,\pi,\e{3}\otimes\chi)L^S(s,\pi,\e{3}\otimes\chi\delta_{E/F}).	
\end{equation}
Following  \eqref{eq:L-BC-GU-E+=E}, 
we may study the poles of $L^S(s,b_{E/F}(\pi_0),\e{3}\otimes\bar{\omega}_\pi\chi_E)$,
by writing it in terms of $L$-functions of $\sig_1$ and $\sig_2$.
Since $\sig_i$ for $i=1,2$ is cuspidal automorphic representation of $\GL_3(\BA_E)$, 
we have $\e{2}\sig_i=\omega_{\sig_i}\otimes\sig_i^\vee$ and
\begin{align*}
 L^S(s,b_{E/F}(\pi_0),\e{3}\otimes\bar{\omega}_\pi\chi_E)
=&L^S(s,\omega_{\sig_1}\bar{\omega}_\pi\chi_E)L^S(s, (\bar{\omega}_\pi\chi_E\omega_{\sig_2}\otimes\sig_1)\times\sig^\vee_2)\\
&L^S(s,\omega_{\sig_2}\bar{\omega}_\pi\chi_E)
L^S(s, (\bar{\omega}_\pi\chi_E\omega_{\sig_1}\otimes\sig_2)\times\sig^\vee_1).
\end{align*}

First, let us compare the central characters of $\bar{\omega}_\pi\chi_E\omega_{\sig_{i}}\otimes\sig_{3-i}$ and $\sig_{i}$ for $i=1,2$, which are $(\bar{\omega}_\pi\chi_E\omega_{\sig_{i}})^3\omega_{\sig_{3-i}}$ and $\omega_{\sig_{i}}$ respectively.
we have $(\bar{\omega}_\pi\chi_E\omega_{\sig_{i}})^3\omega_{\sig_{3-i}}=\omega_{\sig_{i}} $ if and only if 
\begin{equation} \label{eq:Base-change-type-3-3}
\omega_{\sig_i}\bar{\omega}_\pi\chi_{E}= \begin{cases}
	1 &\text{ if }E^+\simeq E\\
	\delta_{K/E} &\text{ if }E^+\not\simeq E 
\end{cases}  	
\end{equation}
by \eqref{eq:central-two-sigs}.

In our case, we have $E^+\simeq E$.
We claim that $L^S(s,b_{E/F}(\pi_0),\e{3}\otimes\bar{\omega}_\pi\chi_E)$ has a pole at $s=1$ if and only if $\omega_{\sig_1}\bar{\omega}_\pi\chi_{E}=\omega_{\sig_2}\bar{\omega}_\pi\chi_{E}=1$.
Moreover, if $s=1$ is a pole, then it is a double pole. 
Remark that $\omega_{\sig_1}\bar{\omega}_\pi\chi_{E}=1$ if and only if $\omega_{\sig_2}\bar{\omega}_\pi\chi_{E}=1$ by \eqref{eq:central-two-sigs}.

If $\omega_{\sig_1}\bar{\omega}_\pi\chi_{E}=\omega_{\sig_2}\bar{\omega}_\pi\chi_{E}=1$, then $L^S(s,\omega_{\sig_i}\bar{\omega}_\pi\chi_{E})$ has a pole at $s=1$ and $L^S(s,b_{E/F}(\pi_0),\e{3}\otimes\bar{\omega}_\pi\chi_E)$ has a pole at $s=1$ of order 2 at least. 
Following from \eqref{eq:L-BC-GU-E+=E}, the order of the pole at $s=1$ is at most 2.
Thus, the order of the pole is exactly 2.
On the other hand, if $L^S(s,b_{E/F}(\pi_0),\e{3}\otimes\bar{\omega}_\pi\chi_E)$ has a pole at $s=1$, then either $L^S(s,\omega_{\sig_i}\bar{\omega}_\pi\chi_{E})$ or $L^S(s, (\bar{\omega}_\pi\chi_E\omega_{\sig_i}\otimes\sig_1)\times\sig^\vee_{3-i})$ has a pole for some $i$.
If  $L^S(s,\omega_{\sig_i}\bar{\omega}_\pi\chi_{E})$ has a pole, then $\omega_{\sig_i}\bar{\omega}_\pi\chi_{E}=1$.
Even if $L^S(s, (\bar{\omega}_\pi\chi_E\omega_{\sig_i}\otimes\sig_i)\times\sig^\vee_{3-i})$ admits a pole, by \eqref{eq:Base-change-type-3-3} we still have $\omega_{\sig_i}\bar{\omega}_\pi\chi_{E}=1$.
In all cases, it follows that $\omega_{\sig_1}\bar{\omega}_\pi\chi_{E}=\omega_{\sig_2}\bar{\omega}_\pi\chi_{E}=1$.
That is, $L^S(s,i_F(\omega_\tau)\otimes\chi)=L^S(s,\triv)L^S(s,\delta_{E/F})$ has a pole at $s=1$.

This completes the proof of this proposition.
\end{proof}

\begin{rmk}
In Proposition \ref{pro:n=3}, suppose that $\omega_\pi\chi^2\vert_{\BA^\times}=\delta_{E/F}$, i.e., $E^+\not\simeq E$.
Following a similar argument of Proposition \ref{pro:n=3}, one may have $L^S(s,i_F(\omega_\tau)\otimes\chi)$ is holomorphic at $s=1$.
For this type of representations, $L^S(s,\pi,\e{3}\otimes\chi)$ is holomorphic at $s=1$.
To establish Conjecture \ref{conj:poles} for $E^+\not\simeq E$, 
we need more refined results on the automorphic induction to show that such $\pi$ is not an automorphic induction $i_{E^+/F,\xi}(\tau)$ for any $\tau$.
We will leave the discuss in the future.
\end{rmk}

\end{document}